\newcolumntype{L}{>{$}l<{$}}
\DeclareMathSymbol{\shortminus}{\mathbin}{AMSa}{"39}
\newtheorem{theorem}{Theorem}[section]
\newtheorem{lemma}[theorem]{Lemma}
\newtheorem{corollary}[theorem]{Corollary}
\newtheorem{proposition}[theorem]{Proposition}
\theoremstyle{definition}
\newtheorem{remark} [theorem] {Remark}
\newtheorem{question} [theorem] {Question}
\theoremstyle{definition}
\newcommand{\C}{{\mathbb{C}}}
\newcommand{\F}{{\mathbb{F}}}
\newcommand{\Q}{{\mathbb{Q}}}
\newcommand{\Z}{{\mathbb{Z}}}
\newcommand{\Tor}{\text{Tor}}
\newcommand{\inj}{\hookrightarrow}
\newcommand{\lk}{\ell\text{k}}
\newcommand{\spin}{\text{spin}}
\newcommand{\slflk}{\mathit{sl}}
\newcommand{\slrep}{\mathfrak{sl}_2}
\DeclareMathOperator{\HFL}{HFL}
\DeclareMathOperator{\HFK}{HFK}
\DeclareMathOperator{\CKh}{CKh}
\DeclareMathOperator{\Kh}{Kh}
\DeclareMathOperator{\Khr}{Khr}
\DeclareMathOperator{\AKh}{AKh}
\DeclareMathOperator{\CAKh}{CAKh}
\DeclareMathOperator{\SFH}{SFH}
\DeclareMathOperator{\CF}{CF}
\DeclareMathOperator{\AHI}{AHI}
\DeclareMathOperator{\rank}{rank}
\date{\today}
\title{Closures of $3$-braids and Detection}
\author{Fraser Binns}
\date{\today}
\address[]{Department of Mathematics, Princeton University}
\email{fb1673@princeton.edu}
\thanks{FB was supported by the Simons Grant {\em New structures in low-dimensional topology}.}
\keywords{Link Floer homology, Khovanov homology}
\begin{document}

\begin{abstract}
  We give some new link detection results for link Floer homology, Khovanov homology and annular Khovanov homology. The links we detect arise via different closure operations on $3$-braids. Examples of our results include that link Floer homology detects the Mazur link, that annular Khovanov homology detects the Mazur pattern, and that Khovanov homology detects L6a2 and L9n15. The Mazur pattern detection result depends on a new bound on the rank of the annular Khovanov homology of certain links.
\end{abstract}

\maketitle

Braids are of wide mathematical interest; see the survey article~\cite{birman2005braids}. In this paper we will consider the four different types of links obtained from braids as shown in Figure~\ref{fig:closures}.\begin{center}
 \begin{figure}[h]

 \begin{subfigure}{0.4\textwidth}\centering
        \includegraphics[width=0.4\textwidth]{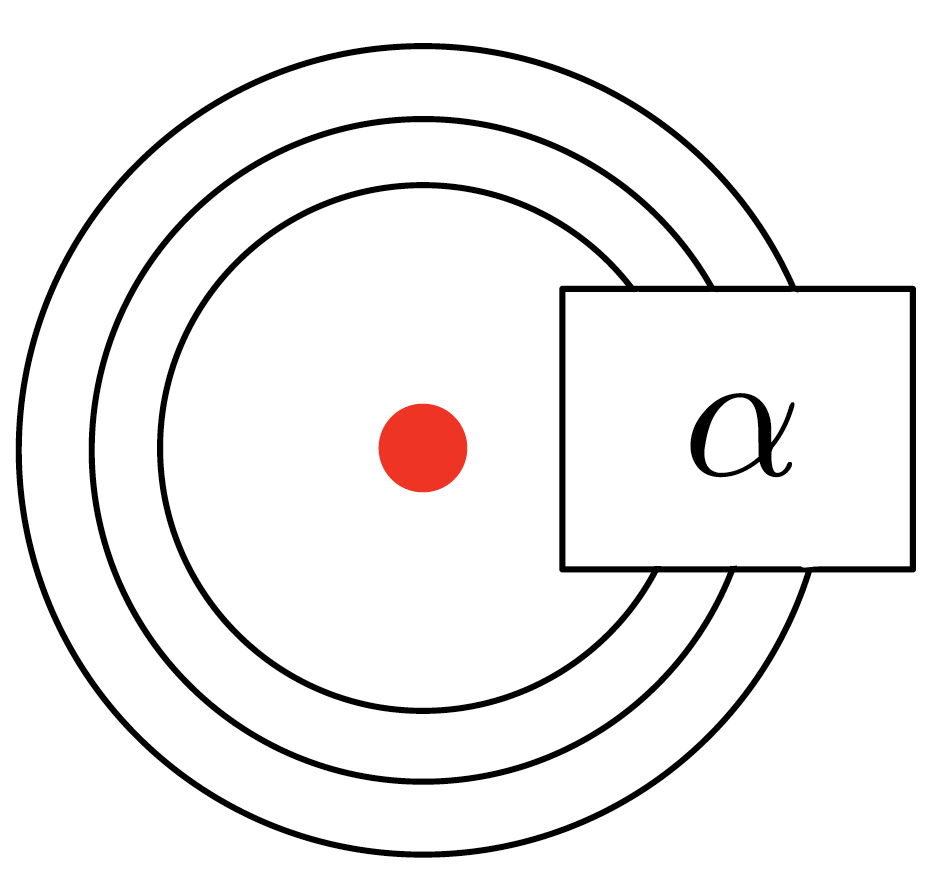}\caption{A braid-closure, $b(\alpha)$.}\label{fig:braidclosure}
    \end{subfigure}    \begin{subfigure}{0.4\textwidth}\centering
        \includegraphics[width=0.4\textwidth]{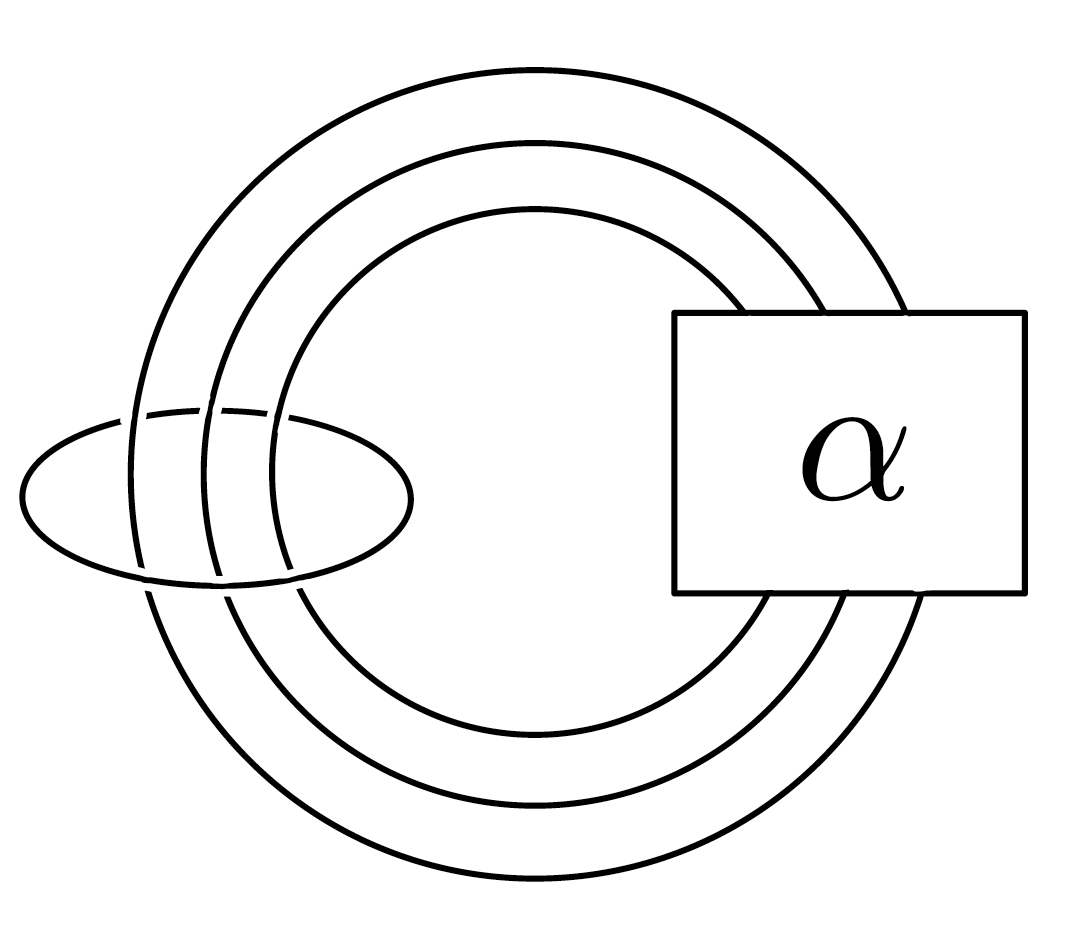}\caption{An augmented braid-closure, $\mathring{b}(\alpha)$.}\label{fig:augmentedbraidclosure}
    \end{subfigure}
    
    \begin{subfigure}{0.4\textwidth}\centering
        \includegraphics[width=0.4\textwidth]{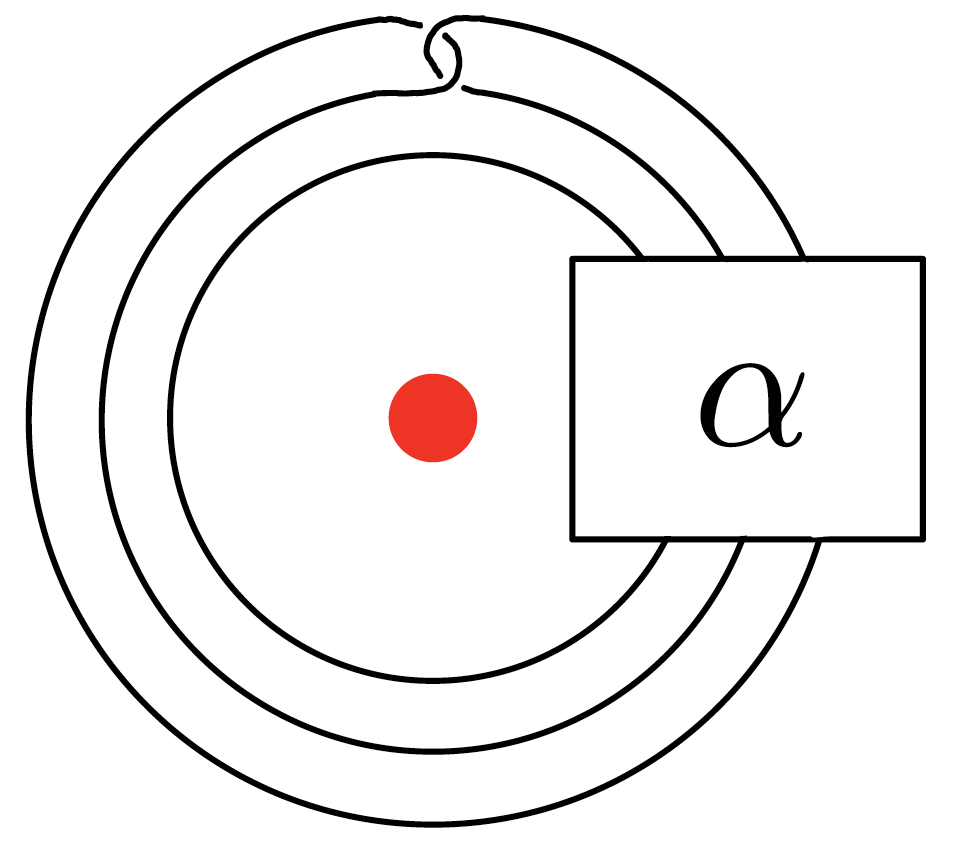}\caption{A clasp-closure, $c(\alpha)$.}\label{fig:claspclosure}
    \end{subfigure} \begin{subfigure}{0.4\textwidth}\centering
        \includegraphics[width=0.4\textwidth]{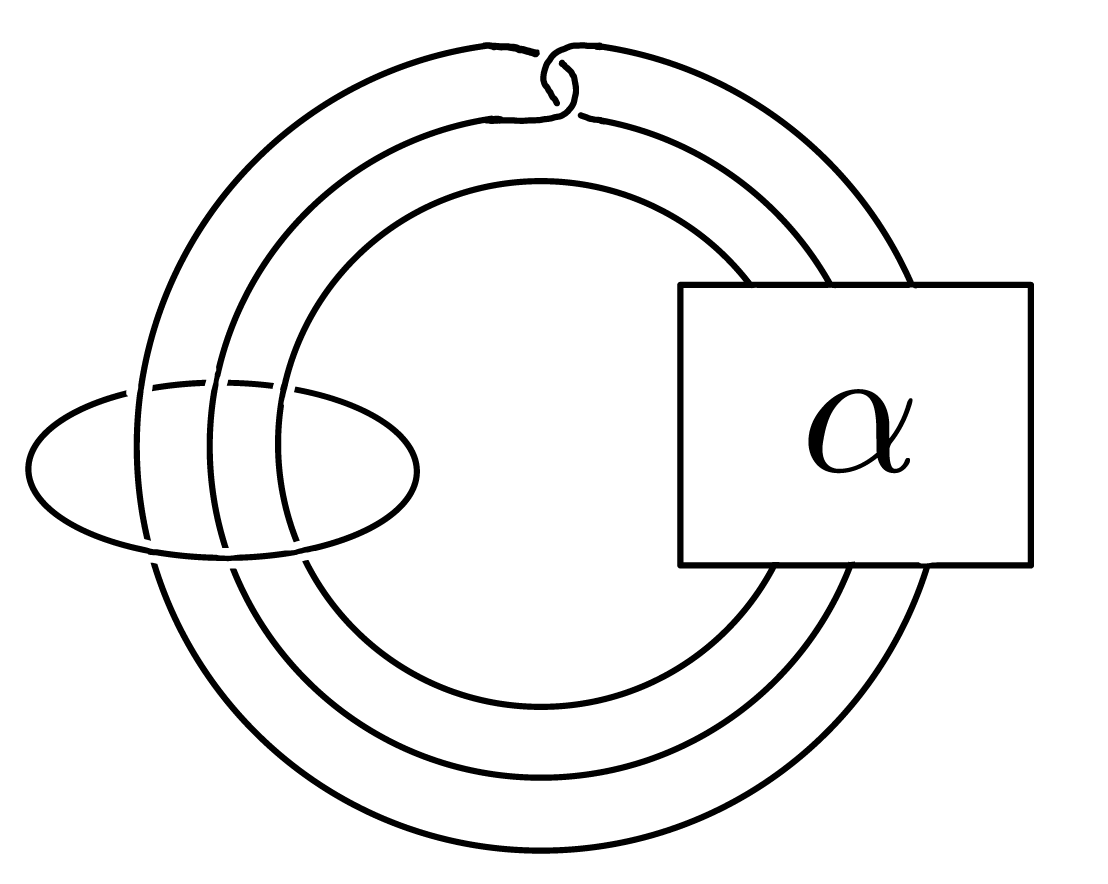}\caption{An augmented clasp-closure, $\mathring{c}(\alpha)$.}\label{fig:augmentedclaspclosure}
    \end{subfigure}

    \caption{The four types of links we study in this paper. The red dots indicates the axes.}\label{fig:closures}
   \end{figure}\end{center}Let $\alpha$ be a braid. The first two types of link we obtain from $\alpha$ have been widely studied. We have the \emph{braid-closure} of $\alpha$, $b(\alpha)$, which for the purposes of this paper is the links in the thickened annulus obtained by attaching $n$ parallel strands as in Figure~\ref{fig:braidclosure}. Secondly, we have the \emph{augmented braid-closure} of $\alpha$, $\mathring{b}(\alpha)$, which is the link obtained by adding the annular axis to $b(\alpha)$, as shown in Figure~\ref{fig:augmentedbraidclosure}.

For the remaining two types of link we move beyond the usual setting of braid-closures. The \emph{clasp-closure} of $\alpha$, $c(\alpha)$, can be thought of as the annular link formed by $b(\alpha)$ and replacing two parallel strands in a ball with a \emph{clasp}, as shown in Figure~\ref{fig:claspclosure}. Note that the clasp is between the two rightmost strands of $\alpha$, though this is simply a matter of convention and plays no significant role. The \emph{augmented clasp-closure} of $\alpha$, $\mathring{c}(\alpha)$, is defined analogously to the augmented braid-closure of $\alpha$, see Figure~\ref{fig:augmentedclaspclosure}.

As motivation for studying clasp-closures, recall that a result of Martin~\cite[Proposition 1]{martin2022khovanov}, states that the links with the simplest link Floer homology --- in an appropriate sense --- are augmented braid-closures. A result of the author and Dey showed that augmented clasp-closures are examples of links with second simplest link Floer homology, in the same sense~\cite[Theorem 5.1]{binns2024floer}. Thus one might reasonably expect that understanding the behaviour of categorified link invariants of  braid- and clasp-closures of braids might be easier than understanding other types of closure.

 (Augmented)  braid-closures of one and $2$-braids are readily classified up to isotopy. Braid-closures of $3$-braids were classified completely by Murasugi~\cite{Murasugiclosed3}. In particular, he showed that there are three braid-closures of $3$-braids representing the unknot, namely $\sigma_1\sigma_2, \sigma_1^{-1}\sigma_2^{-1}$ and $\sigma_1\sigma^{-1}_2$. Here we use the standard Artin generators for the braid group. The augmented braid-closures of these three links are $T(2,6)$, $T(2,-6)$ and $L6a2$ respectively. More generally Birman-Menasco showed that for $|n|\neq 1$ there are two $3$-braid representatives of the torus links $T(2,n)$, namely $\sigma_1^{\pm1}\sigma_2^n$~\cite{BirmanMenasco3braids}. Note that $\mathring{b}(\sigma_1\sigma_2^3)$ is $L9n15$ while $\mathring{b}(\sigma_1^{-1}\sigma_2^3)$ is $L9n16$.
 
 One cannot take the clasp-closure of a $1$-braid. Clasp-closures of $2$-braids are the twisted Whitehead patterns. The case of (augmented) clasp-closures of $3$-braids is more complicated. Baldwin-Sivek classified $3$-braids with clasp-closures representing the unknot, up to isotopy of the clasp-closure, see the proof of~\cite[Theorem 6.1]{baldwin2022floer}. Up to mirroring and reversal these braids are as follows:

\begin{enumerate}
    \item $\sigma_1^{-1}$. The augmentation of this link is L7a6, i.e. the mirror of the Mazur link.
   
    \item $\sigma_1^3\sigma_2^{-1}\sigma_1^2\sigma_2$.
     \item $\sigma^n_1\sigma_2^{-1}\sigma_1\sigma_2$. For $n=1$ the augmentation of this link{ }is L7a5.

\end{enumerate}

Our goal in this paper is to exploit Baldwin-Sivek, Murasugi and Birman-Menasco's classification results to obtain detection results for various categorified link invariants. There are three different invariants we will study; link Floer homology and two versions of Khovanov homology.

\emph{Link Floer homology} is an invariant of oriented links defined by Ozsv\'ath-Szab\'o using symplectic topology~\cite{ozsvath2008holomorphic}. For two{-}component links it takes value in the category of triply graded vector spaces. Our first result is the following:

\newtheorem*{thm:HFLL6a2}{Theorem~\ref{thm:HFLL6a2}}
\begin{thm:HFLL6a2}
    Link Floer homology detects L6a2.
\end{thm:HFLL6a2}

\newtheorem*{thm:HFLl9n15}{Theorem~\ref{thm:HFLl9n15}}
\begin{thm:HFLl9n15}
    Link Floer homology with rational coefficients detects L9n15.
\end{thm:HFLl9n15}

L6a2 is the augmented braid-closure of $\sigma_1\sigma_2^{-1}$. The author and Martin showed that link Floer homology detects the augmented braid-closures of the other two braids that represent the unknot i.e. it was shown that link Floer homology detects $T(2,\pm 6)$ endowed with any orientation~\cite{binns2020knot}. The author and Dey showed that link Floer homology detects all of the augmented braid-closures of $2$-braids~\cite{binns2022cable}. The augmented braid-closure of the $1$-braid is also detected by link Floer homology since it is simply a Hopf link.

The proof strategies for Theorem~\ref{thm:HFLL6a2} and Theorem~\ref{thm:HFLl9n15} are that used by the author and Martin in~\cite{binns2020knot}. That is we use the fact that the link Floer homology of a link $L$ contains various pieces of topological information about $L$. In particular we appeal to Martin's result that link Floer homology detects braid axes~\cite[Proposition 1]{martin2022khovanov}.

We now address $3$-braids with unknotted clasp-closures. For the first type we have detection.

\newtheorem*{thm:l7a6plus}{Theorem~\ref{thm:l7a6plus}}
\begin{thm:l7a6plus}
    {Link Floer homology detects the Mazur link.} %and $\mathring{c}(\sigma_1^3\sigma_2^{-1}\sigma_1^2\sigma_2)$.
\end{thm:l7a6plus}

The author and Dey showed that link Floer homology detects the augmented clasp-closures of all but two $2$-braids and that the remaining two augmented clasp-closures are the unique links of their link Floer homology type~\cite[Theorem 6.1, 6.2]{binns2024floer}.

For the final type of $3$-braid with unknotted clasp-closure we do not get detection. Nevertheless we can give the following classification result:

\newtheorem*{thm:infinitefamily}{Theorem~\ref{thm:infinitefamily}}
\begin{thm:infinitefamily}
    Let $L$ be a link. $\widehat{\HFL}(L)\cong\widehat{\HFL}(\mathring{c}(\sigma_2^{-1}\sigma_1\sigma_2))$ if and only if $L$ is of the form $\mathring{c}(\sigma_1^n\sigma_2^{-1}\sigma_1\sigma_2)$ for some $n\in\Z$.
\end{thm:infinitefamily}

The proof strategies for these two theorems are similar to that used in the proof of Theorem~\ref{thm:HFLL6a2}. The chief difference is that we appeal to the classification of links with link Floer homology of next to minimal rank in certain gradings~\cite[Theorem 5.1]{binns2024floer}, as opposed to Martin's braid axis detection result which was a classification of links with link Floer homology of minimal rank in certain gradings~\cite[Proposition 1]{martin2022khovanov}.

We now turn to \emph{Khovanov homology}. This is a combinatorial link invariant due to Khovanov that takes value in the category of bi-graded vector spaces~\cite{khovanov2000categorification}. We have the following two results:

\newtheorem*{thm:khl6a2}{Theorem~\ref{thm:khl6a2}}
\begin{thm:khl6a2}
    Khovanov homology with integer coefficients detects L6a2.
\end{thm:khl6a2}

\newtheorem*{thm:KhL9n15}{Theorem~\ref{thm:KhL9n15}}
\begin{thm:KhL9n15}
    Khovanov homology with integer coefficients detects $L9n15$.
\end{thm:KhL9n15}

For context recall that Khovanov homology detects the Hopf link~\cite{baldwin2018khovanovhopf}. It also detects the augmented link associated to all 2-braid representatives of the unknot --- namely $T(2,\pm 4)$. This was originally proven by using instanton Floer homology~\cite{xie2020links}, see also~\cite{binns2020knot} for a proof that is more in line with that of Theorem~\ref{thm:khl6a2}. Martin showed that Khovanov homlogy detects $T(2,6)$, one of the braid-closures of a $3$-braid representing the unknot.

The main tool we use to prove this result is Dowlin's spectral sequence~\cite{dowlin2018spectral} from Khovanov homology to knot Floer homology --- a version of link Floer homology due independently to Ozsv\'ath-Szab\'o~\cite{Holomorphicdisksandknotinvariants} and J. Rasmussen~\cite{Rasmussen}. This allows us to reduce the question of detection for Khovanov homology to problems in link Floer homology.

Finally we study \emph{annular Khovanov homology}, a version of Khovanov homology for links in the thickened annulus due to Asaeda-Przytycki-Sikora~\cite{asaeda_categorification_2004}. We have the following family of results:

\newtheorem*{thm:AKH}{Theorem~\ref{thm:AKH}}
\begin{thm:AKH}
 Annular Khovanov homology with integer coefficients detects $b(\sigma_1\sigma_2^{n})$ for $-2\leq n\leq 5$.
\end{thm:AKH}

For context, recall that annular Khovanov homology detects the braid-closure of the identity braids~\cite{BaldwinGrigsby}, {and} all braid-closures of $2$-braids by a combination of work of Grigsby-Ni~\cite{grigsby_sutured_2014} and Grigsby-Licata-Wehrli~\cite{grigsby_sutured_2014}. The author and Martin also showed th{e} $n=1$ case of Theorem~\ref{thm:AKH} {in}~\cite{binns2020knot}. For the proof of our result we use Birman-Menasco's classification of $3$-braids with fixed closures~\cite{BirmanMenasco3braids}.

We can also prove the following:

\newtheorem*{thm:AKHmazur}{Theorem~\ref{thm:AKHmazur}}
\begin{thm:AKHmazur}
Annular Khovanov homology with integer coefficients detects the Mazur pattern.
\end{thm:AKHmazur}

Note that annular Khovanov homology detects the clasp-closures of all $2$-braids, amongst annular knots~\cite[Theorem 8.1]{binns2024floer}. For the proofs of the two preceeding theorems we use a version of the following rank bound:

\newtheorem*{thm:rankboundforarbbraid}{Theorem~\ref{thm:rankboundforarbbraid}}
\begin{thm:rankboundforarbbraid}\footnote{Part one of this theorem was included in an unpublished note of the author written while he was a graduate student.}
If $\beta$ is an $n$-braid with $n\geq 2$ then:\begin{enumerate}
    \item $\rank(\AKh(b(\beta);\C))\geq 2n$.
\item  $\rank(\AKh(c(\beta);\C))\geq 4n$.
\end{enumerate}
\end{thm:rankboundforarbbraid}

This result is inspired by the proof of a structurally similar rank bound in knot Floer homology due to Baldwin-Vela-Vick~\cite{baldwin_note_2018}. The proof relies on the left orderability of the braid group. See Lemma~\ref{lem:nontivialrep} for the more technical version of the result that we apply to prove Theorem~\ref{thm:AKHmazur} and Theorem~\ref{thm:AKH}. A number of other consequences of Theorem~\ref{thm:rankboundforarbbraid} are noted in Section~\ref{sec:applications}. The clasp-closure statement version of Theorem~\ref{thm:rankboundforarbbraid} is perhaps more interesting because there is currently no analogous result in the link Floer homology context.

\begin{remark}
    Link Floer homology, Khovanov homology and {a}nnular Khovanov homology are invariant under overall orientation reversal. All of the detection and classification results in this paper are thus up to overall orientation reversal, if any relevant link and its reverse are distinct.
\end{remark}

We end the introduction with two questions;

\begin{question}
    Is there a complete classification of clasp-closures of $3$-braids in the style of Birman-Menasco's classification of braid-closures of $3$-braids?
\end{question}
 Such a classification might allow one to obtain more classification results for links with categorified link invariants taking certain values.

\begin{question}
   Does annular Khovanov homology detect all clasp-closures of $3$-braids representing the unknot? Does Khovanov homology detect all of their augmentations? Does link Floer homology detect the links $\mathring{b}(\sigma_1\sigma_2^{n})$?
\end{question}
 
  \subsection*{Outline} 
  In Section~\ref{sec:HFL} we prove our results for link Floer homology. In Section~\ref{sec:Kh} we prove our Khovanov homology detection results. In Section~\ref{sec:AKH} we prove our annular Khovanov homology detection results as well as our two rank bounds.

 \subsection*{Acknowledgments} The author would like to thank Gage Martin for various helpful conversations. He would also like to thank Subhankar Dey for further helpful conversations as well as for providing careful feedback on an earlier draft of this paper. He is also grateful for~\cite{knotatlas} and~\cite{linkinfo}, which he found to be very helpful throughout the course of this project. {Finally, he would like to thank the referee for their careful reading of this paper.}

\section{Link Floer Homology}\label{sec:HFL}

In this section we collect our detection results for link Floer homology. In Section~\ref{sec:braidHFL} we show that link Floer homology detects $L6a2$ and $L9n15$. In Section~\ref{sec:claspbraidHFL} we {give a partial classification of} links with the link Floer homology types of augmentations of clasp-closures of index $3$-braids that represent the unknot.

\subsection{A review}

{Link Floer homology is an invariant due to Oszv\'ath-Szab\'o}~\cite{HolomorphicdiskslinkinvariantsandthemultivariableAlexanderpolynomial}{. It assigns to each oriented $n$-component link a finitely generated vector space equipped with $n+1$ gradings.  The first $n$ of these gradings are called \emph{Alexander gradings}, and the last is called the \emph{Maslov grading}. The Alexander gradings takes value in $\frac{1}{2}\Z$, while the Maslov grading takes value in $\Z$. For each component $K$ of a link $L$, there is a spectral sequences from $\widehat{\HFL}(L)$ to $\widehat{\HFL}(L\setminus K)\otimes V\Big[\frac{\lk(K,L\setminus L_i)}{2}\Big]$ corresponding to allowing pseudo-holomorphic disks to ``cross basepoints" in Heegaard diagrams}~\cite[Proposition 7.1]{HolomorphicdiskslinkinvariantsandthemultivariableAlexanderpolynomial}. {Throughout this paper we consider link Floer homology with $\Z/2$ coefficients unless explicitly stated otherwise.}

{Link Floer homology detects the Thurston norm, under mild hypotheses, by a result of Ozssv\'ath-Szab\'o}~\cite{ozsvath2008linkFloerThurstonnorm}{. It also detects braid closures by a result of Martin}~\cite[Proposition 1]{martin2022khovanov}.

\subsection{Braid-closures}\label{sec:braidHFL} We prove the following result:

\begin{theorem}\label{thm:HFLL6a2}
    Link Floer homology detects L6a2.
\end{theorem}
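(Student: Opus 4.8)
The plan is to run the strategy of Binns--Martin \cite{binns2020knot}: identify $L6a2$ as $\mathring{b}(\sigma_1\sigma_2^{-1})$, an augmented braid-closure of a $3$-braid representing the unknot, and then show that any link $L$ with $\widehat{\HFL}(L)\cong\widehat{\HFL}(L6a2)$ must in fact be $L6a2$. First I would record the numerical data of $\widehat{\HFL}(L6a2)$: its total rank, the Alexander multidegrees of the generators, and the Maslov gradings, computed e.g. from the Alexander polynomial together with the known structure of $L6a2$ (this is a small explicit computation). From the Thurston-norm detection of Ozsv\'ath--Szab\'o \cite{ozsvath2008linkFloerThurstonnorm} and the rank data I would pin down that $L$ is a two-component link, that each component is fibered, and that the Seifert genus / Thurston norm of each component and of $L$ match those of $L6a2$; in particular one reads off that $L6a2$ has two unknotted components with linking number $0$ (or whatever the correct value is — this is exactly what the top Alexander gradings encode).

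The key step is Martin's braid-axis detection \cite[Proposition 1]{martin2022khovanov}: $\widehat{\HFL}$ detects when one component of a link is a braid axis for the complementary link. I would use the spectral sequence from $\widehat{\HFL}(L)$ to $\widehat{\HFL}(L\setminus K)\otimes V[\cdot]$ recalled in the review subsection to first identify the complementary knot $K':=L\setminus K$ as the unknot (it has the Floer homology of the unknot after accounting for the linking-number shift), and then to check that the extremal-grading part of $\widehat{\HFL}(L)$ has exactly the minimal possible rank that forces, via Martin's proposition, the component $K$ to be a braid axis of $K'$. So $L$ is the augmentation of a braid-closure: $L=\mathring{b}(\beta)$ for some braid $\beta$ whose closure $b(\beta)$ is the unknot.

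Now I would bound the braid index. The total rank of $\widehat{\HFL}(\mathring{b}(\beta))$ grows with the braid index $n$ of $\beta$ (this is the kind of rank estimate underlying Theorem~\ref{thm:rankboundforarbbraid}, and in the augmented setting it is governed by Martin's structural results); comparing with the rank of $\widehat{\HFL}(L6a2)$ forces $n\le 3$. For $n=1$ the only unknot-closing braid gives the Hopf link, excluded by the rank/Alexander data; for $n=2$ the braids closing to the unknot are $\sigma_1^{\pm1}$, whose augmentations are $T(2,\pm4)$, again excluded. For $n=3$ I invoke Murasugi's classification \cite{Murasugiclosed3}: the only $3$-braids with unknotted closure are $\sigma_1\sigma_2$, $\sigma_1^{-1}\sigma_2^{-1}$, and $\sigma_1\sigma_2^{-1}$, with augmentations $T(2,6)$, $T(2,-6)$, and $L6a2$. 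The first two are distinguished from $L6a2$ by their link Floer homology — e.g. by self-linking/$\tau$-type data, by the Maslov gradings, or simply because Binns--Martin already showed $\widehat{\HFL}$ detects $T(2,\pm6)$ \cite{binns2020knot} and $\widehat{\HFL}(L6a2)\not\cong\widehat{\HFL}(T(2,\pm6))$ by a direct comparison of the invariants. Hence $L=L6a2$.

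The main obstacle I anticipate is the braid-index bound in the augmented setting: one needs a clean statement that $\operatorname{rank}\widehat{\HFL}(\mathring{b}(\beta))$ is strictly increasing (or at least unbounded in a controlled way) in the braid index, sharp enough to rule out $n\ge 4$ while leaving $L6a2$ available. If such a bound is not directly on hand for augmented closures, the fallback is to combine Martin's braid-axis detection (to know $L=\mathring{b}(\beta)$) with the Thurston-norm data for the multi-component link $L$ — the Thurston norm of $\mathring{b}(\beta)$ detects $n$ — which should suffice to cap the braid index at $3$. The remaining case analysis, being over the three explicit Murasugi braids, is then routine.
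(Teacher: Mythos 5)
Your outline follows the paper's proof almost step for step: Martin's braid-axis detection in the extremal Alexander grading, the basepoint-crossing spectral sequences to show both components are unknots, Murasugi's classification of $3$-braids closing to the unknot, and a final comparison with $T(2,\pm 6)$. The one place you wobble is exactly the step you flag as the ``main obstacle,'' and it dissolves more easily than you suggest: there is no clean statement (in this paper or elsewhere) that $\rank\widehat{\HFL}(\mathring{b}(\beta))$ grows with the braid index --- the rank bound of Theorem~\ref{thm:rankboundforarbbraid} is about annular Khovanov homology, not link Floer homology --- so your primary mechanism for capping the index is not available. What the paper does instead is note that $\widehat{\HFL}$ determines the Conway polynomial, whose top coefficient detects the linking number of a two-component link (Hoste), and for $L6a2$ this linking number is $\pm 3$, not $0$ as you tentatively guessed; once one component is known to be a braid axis, the linking number \emph{is} the number of strands, so the braid is a $3$-braid with no rank estimate needed (and your $n=1,2$ cases never arise). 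Your fallback via Thurston-norm detection of the $A_i$-grading spans would also work, but note that both it and the splitness issue require an explicit preliminary check that $L$ is non-split, which the paper handles with a K\"unneth argument before invoking Martin's proposition; you should include that step. With the linking-number observation in place of the rank-growth claim, your argument is the paper's argument.
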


For the readers convenience we recall that the link Floer homology of $L6a2$ is given as follows;

  \begin{equation}
\centering
    \begin{tabular}{|c|c|c|c|c|}
    \hline
    \backslashbox{\!$A_2$\!}{\!$A_1$\!}
     & $\frac{-3}{2}$& $\frac{-1}{2}$&$\frac{1}{2}$&$\frac{3}{2}$ \\
\hline
$\frac{3}{2}$&&$\F_{-1}$&$\F_0$&\\\hline
$\frac{1}{2}$&$\F_{-3}$&$\F_{-2}^3$&$\F_{-1}^3$&$\F_0$ \\\hline
$\frac{-1}{2}$&$\F_{-4}$&$\F_{-3}^3$&$\F_{-2}^3$&$\F_{-1}$\\\hline
$\frac{-3}{2}$&&$\F_{-4}$&$\F_{-3}$& \\\hline
  \end{tabular}\label{table:HFLL6a2} 
\end{equation}

This can be deduced from, say, the fact that $L6a2$ is alternating, the multi-variable Alexander polynomial of $L6a2$, the signature of $L6a2$, and an application of~\cite[Theorem 1.3]{ozsvath2008holomorphic}.

For the proof, our strategy is to argue that if a link has the link Floer homology type as $L6a2$ then it is the augmentation of a braid-closure of a $3$-braid by applying Martin's braid axes detection result~\cite[Proposition 1]{martin2022khovanov}. We then appeal to Murasugi's classification of $3$-braids whose braid-closures are unknot{t}ed and note that link Floer homology distinguishes the corresponding links.

\begin{proof}[Proof of Theorem~\ref{thm:HFLL6a2}]
    Suppose $L$ is a link with the link Floer homology of L6a2. Observe that $L$ cannot be split, since its link Floer homology is not of the correct form. {More specifically, if $L$ were split then it could be written as $K_1\#(U\sqcup K_2)$ for $K_i$ knots and $U$ the unknot, where the connect sum is between $U$ and $K_1$. Consequently, the K\"unneth formula for link Floer homology,}~\cite[Theorem 1.4]{HolomorphicdiskslinkinvariantsandthemultivariableAlexanderpolynomial}, {would imply that each Alexander bigrading would have rank at least two, which we can observe is not the case from Table}~(\ref{table:HFLL6a2}). Since the rank of $\widehat{\HFL}(L)$ in the maximal non-trivial $A_1$ grading is two, it follows from~\cite[Proposition 1]{martin2022khovanov} that the first component of $L$, $L_1$, is a braid axis. Observe that the Conway polynomial of two{-}component links --- and hence knot Floer homology and link Floer homology --- detects the linking number of two{-}component links~\cite{hoste1985firstcoefficientoftheconwaypolynomial}. It follows that $L$ is the augmentation of the braid-closure of a $3$-braid, $L_2$.

    Let --- for the remainder of this section --- $V$ be a rank two vector space supported in Alexander grading $0$ and Maslov gradings $0$ and $-1$, and let $[a]$ indicate a shift in Alexander grading by $a$. There are spectral sequences from $\widehat{\HFL}(L)$ to $\widehat{\HFL}(L_i)\otimes V\Big[\frac{\lk(L)}{2}\Big]$ for each $i$ {corresponding to allowing pseudo-holomorphic disks to ``cross basepoints" in Heegaard diagrams}~\cite[Proposition 7.1]{HolomorphicdiskslinkinvariantsandthemultivariableAlexanderpolynomial}. We thus have that $\widehat{\HFL}(L_i)$ can be supported only in Alexander grading zero, {so that} each $L_i$ is the unknot. Thus{,} $L$ is the augmentation of a braid-closure of a $3$-braid representing the unknot. By Murasugi's classification of $3$-braids up to conjugacy there are exactly three $3$-braid representatives of the unknot; namely $(\sigma_1\sigma_2)^{\pm 1}$, and $\sigma_1\sigma_2^{-1}$~\cite{Murasugiclosed3}. Taking augmentations of the braid-closures of either of the first two braids yields $T(2,\pm 6)$, which have distinct link Floer homology from $L$. The result follows.
\end{proof}

\begin{remark}\label{rmk:l6a2orientations}
    Of course, a two{-}component unoriented link can --- apriori --- be endowed with four distinct orientations. However, $\overline{L6a2}$ is isotopic to the link obtained from $L6a2$ by reversing the orientation of either component. Likewise, the reverse of $L6a2$ is isotopic to $L6a2$. Thus, Theorem~\ref{thm:HFLL6a2} holds as a result for oriented links.
\end{remark}

We proceed to our next detection result.

\begin{theorem}\label{thm:HFLl9n15}
    Link Floer homology with rational coefficients detects L9n15.
\end{theorem}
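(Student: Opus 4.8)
The plan is to mimic the proof of Theorem~\ref{thm:HFLL6a2} --- the strategy of~\cite{binns2020knot} --- substituting Birman--Menasco's classification~\cite{BirmanMenasco3braids} for Murasugi's and inserting one extra step to recognise the relevant sublink as a trefoil rather than an unknot. Recall that $L9n15=\mathring{b}(\sigma_1\sigma_2^3)$ and that $b(\sigma_1\sigma_2^3)$ is the torus knot $T(2,3)$~\cite{BirmanMenasco3braids}. Suppose then that $L$ is a link with $\widehat{\HFL}(L;\Q)\cong\widehat{\HFL}(L9n15;\Q)$; since $\widehat{\HFL}(L9n15)$ is triply graded, $L$ has two components $L_1,L_2$. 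Arguing as in the proof of Theorem~\ref{thm:HFLL6a2}: the K\"unneth formula~\cite[Theorem 1.4]{HolomorphicdiskslinkinvariantsandthemultivariableAlexanderpolynomial} together with inspection of $\widehat{\HFL}(L9n15)$ shows that $L$ is not split and that $\widehat{\HFL}(L)$ has rank two in its extremal $A_1$-grading, so that $L_1$ is a braid axis by Martin~\cite[Proposition 1]{martin2022khovanov}; and since link Floer homology determines the Conway polynomial and hence the linking number~\cite{hoste1985firstcoefficientoftheconwaypolynomial}, $\lk(L_1,L_2)=\pm 3$, so $L_2$ is a closed $3$-braid and $L=\mathring{b}(\alpha)$ for some $3$-braid $\alpha$.

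The new step is to identify $L_2=b(\alpha)$. Since $\widehat{\HFL}$ detects the Thurston norm~\cite{ozsvath2008linkFloerThurstonnorm} and $L_1$ is a braid axis with $\lk(L_1,L_2)=\pm 3$, the Thurston norm of $S^3\setminus L$ forces the Seifert genus $g(L_2)=1$; in particular $\widehat{\HFK}(L_2)$ is supported in Alexander gradings $\{-1,0,1\}$. Comparing the basepoint spectral sequence from $\widehat{\HFL}(L)$ to $\widehat{\HFL}(L_2)\otimes V$ (suitably shifted)~\cite[Proposition 7.1]{HolomorphicdiskslinkinvariantsandthemultivariableAlexanderpolynomial} with the rank of $\widehat{\HFL}(L9n15)$ in its extremal $A_2$-grading then shows that $\widehat{\HFK}(L_2)$ has rank one in its top Alexander grading, so $L_2$ is fibred. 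Finally, link Floer homology records the Alexander polynomial, so $\Delta_{L_2}(t)=t-1+t^{-1}$; as the only genus-one fibred knots in $S^3$ are the two trefoils and the figure-eight knot, and the latter has a different Alexander polynomial, $L_2$ is a trefoil, with chirality fixed by the Maslov gradings recorded by $\widehat{\HFL}(L)$: thus $L_2=T(2,3)$.

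It remains to recover $\alpha$. By Birman--Menasco~\cite{BirmanMenasco3braids}, the only $3$-braids with closure $T(2,3)$ are, up to conjugacy, $\sigma_1\sigma_2^3$ and $\sigma_1^{-1}\sigma_2^3$, whose augmented braid-closures are $L9n15$ and $L9n16$ respectively. Since $\widehat{\HFL}(L9n15;\Q)\not\cong\widehat{\HFL}(L9n16;\Q)$, we conclude $L=L9n15$.

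I expect the main obstacle to be the middle paragraph: whereas in Theorem~\ref{thm:HFLL6a2} the basepoint spectral sequence alone pins the sublink down as the unknot, here one must additionally rule out higher differentials and spurious generators in $\widehat{\HFK}(L_2)$, and the plan for this is to play the Thurston-norm constraint $g(L_2)=1$ against the ranks in the extremal Alexander gradings of the explicitly computed $\widehat{\HFL}(L9n15)$, much as Martin's rank count pins down $L_1$. The restriction to rational coefficients --- absent from Theorem~\ref{thm:HFLL6a2} --- presumably enters at one of the rank comparisons above (for example, to guarantee that $\widehat{\HFL}(L)$ really has rank two in the relevant extremal grading, or to separate $L9n15$ from $L9n16$), and which one it is would become clear upon comparing $\widehat{\HFL}(L9n15)$ over $\F_2$ with its rational version.
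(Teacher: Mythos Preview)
Your strategy is sound and close in spirit to the paper's, but it diverges at one crucial point that also resolves your puzzlement about rational coefficients. The paper explicitly declines to compute $\widehat{\HFL}(L9n15;\Q)$: there is no table to ``inspect''. Instead it bounds the total rank from above by $12$ via Dowlin's spectral sequence from (pointed/reduced) Khovanov homology to knot Floer homology --- and Dowlin's spectral sequence is only known over $\Q$, which is precisely why the theorem carries that hypothesis. The rank-two statement in the extremal $A_1$-grading is then \emph{deduced}, not read off: the basepoint spectral sequence to $\widehat{\HFK}(T(2,-3);\Q)\otimes V[-\tfrac{3}{2}]$ together with the symmetry of link Floer homology forces $\rank(\widehat{\HFL}(L,A_1=k;\Q))\geq 2$ for each of the six values $k\in\{\pm\tfrac{1}{2},\pm\tfrac{3}{2},\pm\tfrac{5}{2}\}$, and the Dowlin upper bound of $12$ squeezes these to equalities. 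Martin's criterion is then applied to \emph{both} components, so each is a braid axis for the other; in particular this is how the paper certifies that the trefoil component is fibred of genus one, after which the Maslov gradings (rather than $\Delta_{L_2}$) pin down the chirality.

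Your route --- compute $\widehat{\HFL}(L9n15)$ once and for all and then read off the extremal ranks, the Thurston polytope, and the fibredness of the non-axis component --- would also work, and if carried out over $\Z/2$ might even remove the coefficient restriction. But as written your middle paragraph has a genuine gap: the Thurston norm gives only $g(L_2)\leq 1$, not equality, and your claim that $\widehat{\HFK}(L_2)$ has rank one at its top Alexander grading rests on knowing that $\widehat{\HFL}(L)$ has rank exactly two in the relevant extremal grading --- which is the very fact you have not justified absent a full computation. The paper's Dowlin-based squeeze is exactly what plugs this hole without ever computing the invariant.
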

We will not compute the link Floer homology of $\mathring{b}(\sigma_1^3\sigma_2)$. Instead{,} we will rely on formal properties of link Floer homology. The reason we take rational coefficients is that we will use the Khovanov homology of $L9n15$ to obtain information about link Floer homology via Dowlin's spectral sequence~\cite{dowlin2018spectral}, which is defined over the rational numbers. 
\begin{proof}
  We first study the link Floer homology of $\widehat{\HFL}(\mathring{b}(\sigma_1^3\sigma_2);\Q)$. Observe that, perhaps after relabeling components, $\widehat{\HFL}(\mathring{b}(\sigma_1^3\sigma_2);\Q)$ has maximal $A_2$ grading $\dfrac{3}{2}$, since we may take the second component to be the braid axis for the braid-closure $\mathring{b}(\sigma_1^2\sigma_2)$. Now, $\mathring{b}(\sigma_1^2\sigma_2)$ bounds a $3$-punctured torus, so we have that the maximal $A_1$-grading is at most $\frac{5}{2}$. In fact{,} the maximal $A_1$ grading must be at least $\frac{5}{2}$ since $\widehat{\HFL}(L;\Q)$ admits a spectral sequence to $\widehat{\HFL}(T(2,-3);\Q)\otimes V[-\frac{3}{2}]$, so that $\widehat{\HFL}(L;\Q)$ must have generators of {$A_{1}$} grading $\pm\frac{5}{2}$. From Knot atlas~\cite{knotatlas} we have that $\rank(\Kh(L9n15;\Z/2))=12$ --- see also Table~\ref{tab:Khl9n15} --- so that $6=\rank(\Khr(L9n15;\Z/2))\geq \rank(\Khr(L9n15;\Q))$ by the universal coefficient theorem and~\cite[Corollary 3.2.C]{shumakovitch2014torsion}. It follows that $\rank(\widehat{\HFL}(L9n15;\Q))\leq 12$ by an application of the rank bound from Dowlin's spectral sequence~\cite{dowlin2018spectral} together with some properties of pointed Khovanov homology~\cite[lemma 2.11]{baldwin2017khovanovpointed}. 

     Suppose $L$ is a link with the same link Floer homology with rational coefficients as $\mathring{b}(\sigma_1^3\sigma_2)$. Since $\widehat{\HFL}(L;\Q)$ determines the Conway polynomial of $L$ and the Conway polynomial of $L$ determines the linking number of two{-}component links~\cite{hoste1985firstcoefficientoftheconwaypolynomial}, it follows that $L$ has linking number $-3$. In particular $L$ is non-split. {Now,} the link Floer polytope {of $L$ agrees with that of $L9n15$. Since the link Floer polytope detects the Thurston polytope}~\cite{ozsvath2008linkFloerThurstonnorm} {for non-split links, it follows that the Thurston polytopes of $L$ and $L9n15$ agree. Consequently} $L_2$ bounds a surface in the exterior of $L$ of Euler characteristic $-2${, just as does the corresponding component in}  $Ln15$. Since such a surface necessarily has at least four {boundary components}, it follows that it is, in fact, a $4$-punctured disk, so that $L_2$ is an unknot. Since the rank in the maximum non-trivial Alexander grading is two, it follows that $L_2$ is a braid axis by~\cite[Proposition 1]{martin2022khovanov}.
     
     We now study the first component of $L$. From the link Floer polytope of $L$ we can see that $L_1$ bounds a surface in the exterior of $L$ of Euler characteristic $-4$. Since the linking number of $L$ is three, it follows that $L_1$ has Seifert-genus at most one. {In particular, }$\widehat{\HFL}(L_1;\Q)$ {is non trivial in Alexander gradings $\pm1$ and trivial in Alexander gradings $i$ with $|i|>1$. Since the rank of $\widehat{\HFL}(L_1;\Q)$ in Alexander gradings $\pm1$ are equal and the total rank of $\widehat{\HFL}(L;\Q)$ must be odd, it follows that} $\widehat{\HFL}(L_1;\Q)$ { must be non-trivial in Alexander grading zero.} We now prove that $L_1$ is fibered. Recall that there is a spectral sequence from $\widehat{\HFL}(L;\Q)$ {to} ${\widehat{\HFL}(L_1\otimes) V[-\frac{3}{2}]}$~\cite[Theorem 1.4]{HolomorphicdiskslinkinvariantsandthemultivariableAlexanderpolynomial}. {Observe that } ${\widehat{\HFL}(L)_1\otimes V[-\frac{3}{2}]}${ --- and so, in turn,  $\widehat{\HFL}(L;A_1=k;\Q)$ --- } must be of rank at least two for {$k=-\frac{1}{2},-\frac{3}{2},-\frac{5}{2}$}. In fact, by symmetry properties of link Floer homology{,} $\widehat{\HFL}(L,A_1=k;\Q)$ must be of rank at least two for {$k=\frac{1}{2},\frac{3}{2},\frac{5}{2}$} too. It follows that $\rank(\widehat{\HFL}(L;\Q))= 12$ and indeed that $\widehat{\HFL}(L;\Q)$ is of rank two in the maximal non-trivial $A_1$ grading. Martin's braid axis detection result implies that $L_1$ is a braid axis for $L_2$ and so, in particular, fibered~\cite[Proposition 1]{martin2022khovanov}. Indeed, since the maximal non-trivial $A_1$ grading is $1+\frac{3}{2}$, $L_1$ must be a genus one fibered knot. It follows that $L_1$ is a trefoil or a figure eight knot. To see that $L_1$ is a left handed trefoil, observe that $\widehat{\HFL}(L9n15,A_2=-\frac{5}{2};\Q)$ must be supported in Maslov gradings $0$ and $-1$ since it has a left-handed trefoil component and the linking number is $-3$. This in turn implies that $L_1$ must have a left handed trefoil component, since in the right handed trefoil case and Figure eight case $\widehat{\HFL}(L,A_1=-\frac{5}{2};\Q)$ would have to be supported in Maslov gradings $-2$ and $-3$ or $-1$ and $-2$ respectively.
     
     Now by Birman-Menasco's classification theorem for $3$-braids~\cite{BirmanMenasco3braids}, there are exactly two $3$-braids with braid-closures representing $T(2,-3)$, namely $\sigma_1^{-1}\sigma_2^{-3}$ and $\sigma_1\sigma_2^{-3}$, which is $L9n16$. These two links are distinguished by their Alexander polynomials, so the result follows.
\end{proof}

\begin{remark}\label{rem:L9n15orientations}
    Once again there are --- apriori --- four possible orientations with which L9n15 can be endowed. One pair of these have linking number $-3$ while the other has linking number $3$. It can be checked that each pair with the same linking number are, in fact, isotopic as links. That is, Theorem~\ref{thm:HFLl9n15} holds as a statement for oriented links.
\end{remark}

\subsection{Clasp-closures}~\label{sec:claspbraidHFL} In this section we {study} the links with the link Floer homology type of augmentations of clasp-closures of $3$-braids representing the unknot. In particular, we obtain the following two theorems advertised in the introduction:

\begin{theorem}\label{thm:l7a6plus}
   { Link Floer homology detects the Mazur link.}
\end{theorem}

\begin{theorem}\label{thm:infinitefamily}
    Let $L$ be a link. $\widehat{\HFL}(L)\cong\widehat{\HFL}(\mathring{c}(\sigma_2^{-1}\sigma_1\sigma_2))$ if and only if $L$ is of the form $\mathring{c}(\sigma_1^n\sigma_2^{-1}\sigma_1\sigma_2)$ for some $n\in\Z$.
\end{theorem}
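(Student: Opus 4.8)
The plan is to prove both directions of the equivalence by the same structural analysis used for Theorem~\ref{thm:HFLL6a2} and Theorem~\ref{thm:HFLl9n15}, but now invoking the next-to-minimal rank classification of~\cite[Theorem 5.1]{binns2024floer} in place of Martin's braid-axis detection. The "if" direction requires showing that $\widehat{\HFL}(\mathring{c}(\sigma_1^n\sigma_2^{-1}\sigma_1\sigma_2))$ is independent of $n$: since all these links share the same clasp-closure of an unknotted $3$-braid (the braid $\sigma_1^n\sigma_2^{-1}\sigma_1\sigma_2$ has unknotted closure for all $n$, as these are exactly the third family in Baldwin--Sivek's list), and since adding $\sigma_1^n$ changes the $3$-braid but not the isotopy type of $c(\sigma_1^n\sigma_2^{-1}\sigma_1\sigma_2)$ in the thickened annulus up to the relevant moves — one must check carefully that the underlying link $\mathring{c}(\sigma_1^n\sigma_2^{-1}\sigma_1\sigma_2)$, with its axis, can have varying link type but constant link Floer homology. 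I would establish this by computing $\widehat{\HFL}(\mathring{c}(\sigma_2^{-1}\sigma_1\sigma_2))$ explicitly (it is an augmented clasp-closure of a $3$-braid with unknotted clasp-closure, hence a link of next-to-minimal link Floer homology rank in the sense of~\cite[Theorem 5.1]{binns2024floer}) and then arguing that the Alexander gradings, Thurston polytope, and Maslov gradings are unchanged under $n \mapsto n+1$: the clasp-closures $c(\sigma_1^n\sigma_2^{-1}\sigma_1\sigma_2)$ are all unknots, so their augmentations all have one component an unknot and the other a braid axis, and the relevant spectral sequences pin down the homology up to the data that is manifestly $n$-independent.

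For the "only if" direction, suppose $L$ has $\widehat{\HFL}(L)\cong\widehat{\HFL}(\mathring{c}(\sigma_2^{-1}\sigma_1\sigma_2))$. First I would rule out that $L$ is split, using the K\"unneth formula exactly as in the proof of Theorem~\ref{thm:HFLL6a2}. Then, computing the linking number from the Conway polynomial via~\cite{hoste1985firstcoefficientoftheconwaypolynomial}, and reading off the Thurston polytope from the link Floer polytope via~\cite{ozsvath2008linkFloerThurstonnorm}, I would identify which component — say $L_2$ — bounds a planar surface forcing it to be an unknot, and which component $L_1$ is constrained so that the pair $(L, L_1)$ realizes the next-to-minimal rank situation of~\cite[Theorem 5.1]{binns2024floer}. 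Applying that classification theorem gives that $L$ is (the augmentation of) a clasp-closure of a $3$-braid, and moreover that the underlying clasp-closure is an unknot. Baldwin--Sivek's classification then says the $3$-braid is, up to mirroring and reversal, one of the three families $\sigma_1^{-1}$, $\sigma_1^3\sigma_2^{-1}\sigma_1^2\sigma_2$, or $\sigma_1^n\sigma_2^{-1}\sigma_1\sigma_2$. The first gives the Mazur link (handled separately in Theorem~\ref{thm:l7a6plus}), whose link Floer homology differs from that of $\mathring{c}(\sigma_2^{-1}\sigma_1\sigma_2)$; the second is a single link whose link Floer homology can be computed and distinguished; so $L$ must be $\mathring{c}(\sigma_1^n\sigma_2^{-1}\sigma_1\sigma_2)$ for some $n$, as desired.

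The main obstacle I anticipate is the "if" direction — specifically, proving rigorously that the link Floer homology of $\mathring{c}(\sigma_1^n\sigma_2^{-1}\sigma_1\sigma_2)$ genuinely does not depend on $n$, since these are honestly different links (they are not all isotopic — otherwise the statement would be vacuous). The cleanest route is probably to show that for each $n$ the augmented clasp-closure has $L_1$ a genus-one fibered knot (forced by the polytope computation) and $L_2$ its braid axis, and that the grading data of $\widehat{\HFL}$ is then determined by the linking number and the genus, both of which are $n$-independent — but one has to be careful about whether $L_1$ is the trefoil versus the figure-eight, and whether that can vary with $n$. Checking the Maslov gradings to resolve the trefoil-versus-figure-eight ambiguity (as in the proof of Theorem~\ref{thm:HFLl9n15}) uniformly in $n$ is the delicate point; I expect this comes down to a signature or an Alexander-polynomial computation for the family $c(\sigma_1^n\sigma_2^{-1}\sigma_1\sigma_2)$ and its satellite structure. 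A secondary obstacle is computing $\widehat{\HFL}$ of the second Baldwin--Sivek family $\mathring{c}(\sigma_1^3\sigma_2^{-1}\sigma_1^2\sigma_2)$ precisely enough to distinguish it — this is a concrete link, so this should be a routine if slightly involved computation.
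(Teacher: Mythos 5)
Your ``only if'' direction is essentially the paper's argument (rule out splitness, extract the linking number and Thurston polytope, apply the next-to-minimal rank classification of~\cite[Theorem 5.1]{binns2024floer} --- which the paper applies to \emph{both} components via Lemma~\ref{lem:extra} --- and then invoke Baldwin--Sivek; the paper excludes $\mathring{c}(\sigma_1^3\sigma_2^{-1}\sigma_1^2\sigma_2)$ not by computing its homology but by observing that one of its components is a clasp-closure of an index $5$ braid, so its maximal Alexander grading is $\frac{5}{2}$ rather than $\frac{3}{2}$, and it separates the Mazur link and its mirror from $\mathring{c}(\sigma_2^{-1}\sigma_1\sigma_2)$ and its mirror via Alexander polynomials). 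However, your ``if'' direction has a genuine gap, and it is exactly the point you flag as delicate. The paper proves that $\widehat{\HFL}(\mathring{c}(\sigma_1^n\sigma_2^{-1}\sigma_1\sigma_2))$ is independent of $n$ by a mechanism your proposal does not contain: the unoriented resolution of the crossing in the $\sigma_1^n$ twist region exhibits all of these links as band sums of the split union of a Hopf link and an unknot, differing from one another only by adding twists to the band, and J.~Wang's theorem~\cite[Remark 1.18]{Wangcosmetic} states that link Floer homology is unchanged under adding twists to such a band. Without an input of this kind there is no way to conclude $n$-independence.

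Your proposed substitute --- that the structural constraints (unknotted components, clasp-closure/braid-axis structure, linking number, polytope) ``pin down'' $\widehat{\HFL}$ up to manifestly $n$-independent data --- cannot work, because those constraints do not determine the invariant: the Mazur link and $\mathring{c}(\sigma_2^{-1}\sigma_1\sigma_2)$ satisfy exactly the same constraints (both are augmentations of clasp-closures of $3$-braids representing the unknot, with two unknotted components and linking number $\pm1$), yet the content of Theorem~\ref{thm:l7a6plus} and Theorem~\ref{thm:infinitefamily} is precisely that their link Floer homologies differ. Moreover, the specific picture you describe for the family --- one component a genus-one fibered knot, with a trefoil-versus-figure-eight ambiguity to be resolved by Maslov gradings or signatures --- is imported from the proof of Theorem~\ref{thm:HFLl9n15} and does not apply here: both components of $\mathring{c}(\sigma_1^n\sigma_2^{-1}\sigma_1\sigma_2)$ are unknots (the clasp-closure represents the unknot by Baldwin--Sivek, and the axis is unknotted), so no such dichotomy arises, and no amount of Alexander-polynomial or signature bookkeeping for the pattern will recover the full triply graded homology uniformly in $n$. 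The missing idea is Wang's band-twist invariance (or some equivalent skein-theoretic argument identifying the links at the level of $\widehat{\HFL}$).
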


We begin by discussing some structural properties of the link Floer homology of links that are augmentations of clasp-closures of $3$-braids representing the unknot. Let $L$ be such a link, with the first component of $L$, $L_1$, being the clasp-closure of the $3$-braid and the second component, $L_2$, its axis.  The maximal $A_2$ grading in which $\widehat{\HFL}(L,A_2)$ has non-trivial support is $\frac{3}{2}$. This follows from~\cite[Theorem 1.1]{ozsvath2008linkFloerThurstonnorm}.

We also have the following result:

\begin{lemma}\label{lem:maxgradingmasloov}
    Suppose $L$ is the augmentation of a clasp-closure, $L_1$, of braid representing a knot. Then the component of $\widehat{\HFL}(L)$ with maximal non-trivial $A_2$ grading is given by $\F_{-1}[-1]\oplus\F_{0}^2[0]\oplus\F_1[1]$ up to overall shifts in the Maslov and $A_1$ gradings.
\end{lemma}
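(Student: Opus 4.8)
\textbf{Proof proposal for Lemma~\ref{lem:maxgradingmasloov}.}

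The plan is to pin down the top $A_2$-layer of $\widehat{\HFL}(L)$ by exploiting the two structural inputs that are already available: the fact that $L_2$ is an axis (so $L_1$ is a braid-closure-with-clasp of a knot) and Martin's axis-detection/next-to-minimal-rank results. First I would identify the fibered surface that realizes the Thurston norm in the class dual to $L_2$. Since $L_1$ is the clasp-closure of a braid $\alpha$ representing a knot, $L_1$ together with its axis $L_2$ is (up to the clasp) an augmented braid-closure; the page of the open book cut out by $L_2$ is a once-punctured, $n$-times-punctured disk, and introducing the clasp changes this surface in a controlled way, adding one handle and thereby raising $|\chi|$ by exactly one relative to the augmented braid-closure case. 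So the maximal $A_2$ grading at which $\widehat{\HFL}(L,A_2)$ is supported is $\tfrac{3}{2}$, and this top layer is the link Floer homology of the (connected, genus-one-ish) fiber surface $S$ with $\partial S$ consisting of $L_1$ and one longitude of $L_2$. The key point is that this surface is a \emph{Thurston-norm-minimizing} surface whose genus is as small as possible compatible with the clasp, and $\widehat{\HFL}$ in the top grading computes $\widehat{\HF}$ of the sutured manifold obtained by cutting along $S$.

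Next I would invoke the rigidity at the top grading. By~\cite[Theorem 1.1]{ozsvath2008linkFloerThurstonnorm} (or rather its proof, via sutured Floer homology and Juhász's surface decomposition), the top $A_2$ summand $\widehat{\HFL}(L, A_2 = \tfrac 32)$ is isomorphic to $\SFH$ of the complementary sutured manifold $(M', \gamma')$, which here is the complement of $L_1 \cup L_2$ decomposed along $S$. Because the augmented braid-closure of a braid representing the \emph{unknot} has a top layer of rank one (Martin's braid-axis detection is precisely the statement that this rank equals one iff the relevant component is an axis), and because the clasp is a local modification contained in a single ball meeting $S$ in an arc, the effect on $\SFH$ of the complementary sutured manifold is a local one: it is the sutured-Floer analogue of forming the mapping cone / oriented resolution at the clasp. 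Concretely, one shows the decomposed sutured manifold for the clasp-closure is obtained from the one for the braid-closure by gluing in a fixed elementary piece (a product sutured handlebody with an extra pair of sutures), whose $\SFH$ contribution is the rank-$4$ module $\F_{-1}[-1]\oplus \F_0^2[0]\oplus\F_1[1]$ — exactly the claimed answer. This is the same module one gets for the top layer of, say, $\widehat{\HFL}$ of $L7a6$ or $L7a5$, which are the $n{=}1$ augmented clasp-closures, so it can also be verified by direct computation in those base cases and then propagated.

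Alternatively — and this is probably the cleaner route — I would argue purely formally from the spectral sequences already cited. There is a spectral sequence from $\widehat{\HFL}(L)$ converging to $\widehat{\HFL}(L_1)\otimes V[\tfrac{\lk}{2}]$, and since $L_1$ is a clasp-closure of a braid representing a knot, $L_1$ is itself an unknot (the clasp-closure of such a braid unknots, as in the Baldwin--Sivek list). Reading the top $A_2$-graded piece: the differentials in the link-Floer-to-link-Floer spectral sequence strictly lower the $A_2$ grading (they ``cross basepoints'' on $L_2$), so the top layer $\widehat{\HFL}(L, A_2 = \tfrac 32)$ must be a subquotient that survives to $E_\infty$ but lies above everything it could map to, hence is a genuine summand of a single $A_2$-graded piece of $\widehat{\HFL}(\text{unknot})\otimes V$ — but with a grading shift governed by the clasp. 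Pairing this with Euler-characteristic constraints (the graded Euler characteristic of the top layer is determined by the top coefficient of the multivariable Alexander polynomial, which for an augmented clasp-closure of a knotted braid is $\pm(t^{1/2}-t^{-1/2})^2$ in the $A_1$ variable, forcing alternating ranks $1,2,1$) and with the symmetry $\widehat{\HFL}_d(L,A_1,A_2)\cong\widehat{\HFL}_{d-2A_1-2A_2}(L,-A_1,-A_2)$ rules out all but the stated module. The main obstacle, and where I would spend the most care, is justifying that the top $A_2$ layer has \emph{total rank exactly $4$} rather than merely $\geq 4$ or $\equiv$ the right Euler characteristic mod $2$: this is where one genuinely needs an adjunction/thinness-type input — either Martin-style minimal-rank rigidity applied to the sutured manifold after the surface decomposition, or the observation that cutting along the genus-one fiber $S$ yields a sutured manifold that is ``almost product'' (a product sutured handlebody with one extra canceling pair of sutures from the clasp), whose $\SFH$ is forced to be this rank-$4$ piece. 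I would set up that sutured-manifold computation explicitly, as it is the crux of the lemma.
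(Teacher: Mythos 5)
Your overall framework --- identifying the top $A_2$-graded piece of $\widehat{\HFL}(L)$ with the sutured Floer homology of the exterior of $L$ decomposed along a maximal Euler characteristic longitudinal surface for $L_2$ --- is the same one the paper uses, but the proposal breaks down at exactly the points that carry the content of the lemma, and it also misreads the hypothesis. The hypothesis is that the clasp-closure $L_1$ is a \emph{knot}, not the unknot: your ``cleaner route'' asserts that $L_1$ unknots and invokes the Baldwin--Sivek list, which is unjustified (that list concerns braids whose clasp-closures are unknots, a special subfamily) and incompatible with how the lemma is later applied, e.g.\ in the proof of Theorem~\ref{thm:khl6a2}, where it is used before one knows whether the clasp-closure component is an unknot or a trefoil. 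Likewise, the rank statement (total rank exactly four, distributed $1,2,1$ across relative $A_1$ gradings $-1,0,1$) does not follow from your ``local modification at the clasp'' or ``verify in base cases and propagate'' arguments: a clasp yields at best a skein-type exact triangle or an inequality, not an equality, and your claim about the top coefficient of the multivariable Alexander polynomial of an arbitrary augmented clasp-closure is asserted rather than proved. The paper does not re-derive this part at all; it quotes it from~\cite[Lemma 5.9]{binns2024floer}, and the sole new content of the present lemma is the Maslov gradings.

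That Maslov-grading statement is precisely where your proposal has no argument. The subtlety is that $\SFH(Y,\gamma)$ of the decomposed sutured manifold carries a priori only a \emph{relative} Maslov grading within each $\spin^c$ structure, and the four generators lie in different $\spin^c$ structures, so there is no a priori way to compare their Maslov gradings, let alone to match them with Maslov gradings in $\widehat{\HFL}(L)$. The paper handles this by capping off the sutures corresponding to meridians of $L_1$, obtaining a sutured manifold in which all four generators sit in a single $\spin^c$ structure; this upgrades the grading to a relative Maslov grading defined across all $\spin^c$ structures. It then verifies that Juh\'asz's maps identifying $\SFH(Y,\gamma)$ with the top $A_2$ summand of $\widehat{\HFL}(L)$ respect this grading, by rerunning the doubling-of-Heegaard-diagrams argument and the correspondence of pseudo-holomorphic disks, noting the correspondence persists after filling in boundary components. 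Saying you ``would set up that sutured-manifold computation explicitly'' correctly identifies the crux but supplies neither the capping-off idea nor the compatibility check, so as written the proposal does not prove the lemma.
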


A version of this result without the Maslov grading is given in~\cite[Lemma 5.9]{binns2024floer}. The proof of this Lemma requires techniques from sutured Floer homology. The reader is directed to Juh\'asz' papers~\cite{juhasz2006holomorphic,juhasz2008floer,juhasz2010sutured} for necessary background.

\begin{proof}
Suppose $L$ is as in the statement of the Lemma. A sutured Heegaard diagram for the sutured manifold $(Y,\gamma)$ obtained by decomposing the exterior of $L$ along an appropriate maximal Euler characteristic longitudinal surface for $L_2$ is shown in Figure~\ref{fig:suturedhd}.

 \begin{figure}[h]
 \centering
 \includegraphics[width=5cm]{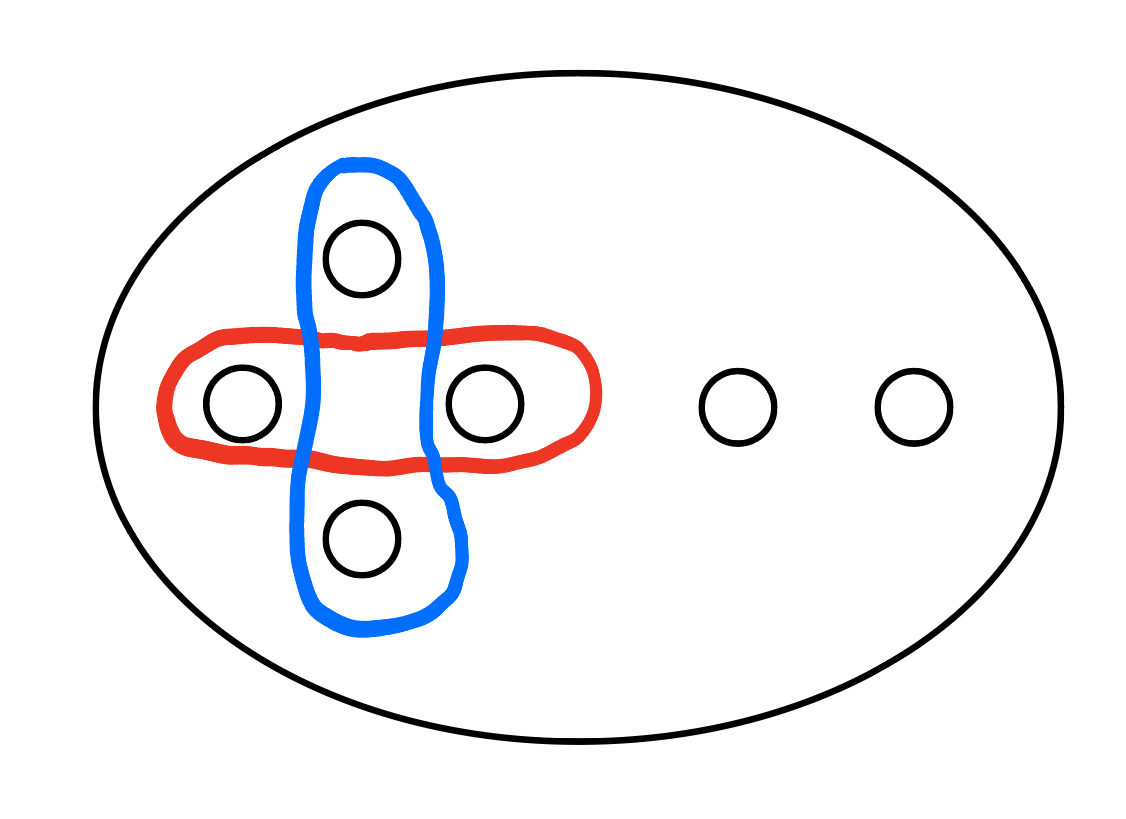}
    \caption{A sutured Heegaard diagram for the sutured manifold obtained by decomposing an augmented clasp-closure along a longitudinal surface for its axis. The outer boundary component of the surface corresponds to a longitude of $L_2$, while the inner boundary components correspond to meridians of $L_1$.}\label{fig:suturedhd}
\end{figure}

    A-priori $\SFH(Y,\gamma)$ only comes with a relative Maslov grading in each $\spin^c$ structure. However, in the case at hand these Maslov gradings can be upgraded to a relative Maslov grading that applies across all $\spin^c$ structures. To see this{,} observe that capping off the sutures corresponding to meridians of $L_1$ results in another sutured manifold, $(\widehat{Y},\widehat{\gamma})$ in which all four of the generators of $\CF(\widehat{Y},\widehat{\gamma})$ are supported in a single $\spin^c${-}structure. The claim follows. It remains to check that the maps $\hat{p}$ from~\cite[Proposition 5.4]{juhasz2010sutured} respect this relative Maslov grading which applies across all $\spin^c$ structures. However, this follows by repeating Juh\'asz' proof of~\cite[Proposition 5.4]{juhasz2010sutured}. Specifically, there is a Heegaard diagram for $(Y,\gamma)$ that can be obtained by doubling a Heegaard diagram for the exterior of $L$ along a certain subsurface~\cite[Proposition 5.2]{juhasz2008floer}, and pseudo-holomorphic disks from the doubled Heegaard diagram correspond to disks in the Heegaard diagram for $L$~\cite[proposition 7.6]{juhasz2008floer}. This correspondence still holds if we fill in boundary components, yielding the desired result.
\end{proof}

We can now prove that link Floer homology detects augmented clasp-closures of $3$-braids. Our proof depends on the considerably more general classification of links with link Floer homology of next to minimal rank in the maximal non-trivial Alexander grading of a given component due to the author and Dey~\cite[Theorem 5.1]{binns2024floer}.

\begin{lemma}\label{lem:3clasprepunknot}
   Suppose that a link $L$ has the link Floer homology type of an augmented clasp-closure of a $3$-braid representing the unknot. Then $L$ is an augmented clasp-closure of a $3$-braid.
\end{lemma}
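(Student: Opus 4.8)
The plan is to mimic the argument used for Theorem~\ref{thm:HFLL6a2} and Theorem~\ref{thm:HFLl9n15}, but replacing the appeal to Martin's braid-axis detection result with the author--Dey classification of links whose link Floer homology has next-to-minimal rank in the maximal non-trivial Alexander grading of a component~\cite[Theorem 5.1]{binns2024floer}. So let $L$ be a link with $\widehat{\HFL}(L)\cong\widehat{\HFL}(\mathring{c}(\alpha))$ for some $3$-braid $\alpha$ whose clasp-closure is the unknot. First I would record the structural features of $\widehat{\HFL}(\mathring{c}(\alpha))$ that we need: by the discussion preceding Lemma~\ref{lem:maxgradingmasloov}, the maximal non-trivial $A_2$ grading is $\tfrac{3}{2}$, and by Lemma~\ref{lem:maxgradingmasloov} the top $A_2$ summand has the shape $\F_{-1}[-1]\oplus\F_0^2[0]\oplus\F_1[1]$ (up to grading shifts). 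In particular it has rank $4$ and is \emph{not} of minimal rank, so Martin's result does not apply directly, but it is exactly in the regime covered by~\cite[Theorem 5.1]{binns2024floer}.

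Next I would check that $L$ is non-split and has the correct linking number. Non-splitness follows from the shape of the link Floer homology as in the proof of Theorem~\ref{thm:HFLL6a2}: a split link would be a connected sum involving an unknotted split component, and the K\"unneth formula would force every Alexander bigrading to have even rank, contradicting the presence of the rank-one corners of the top $A_2$ summand from Lemma~\ref{lem:maxgradingmasloov}. The linking number is detected by the Conway polynomial, hence by $\widehat{\HFL}$, so $\lk(L_1,L_2)=\lk$ agrees with that of $\mathring{c}(\alpha)$. Now apply~\cite[Theorem 5.1]{binns2024floer} to the component $L_2$: since $\widehat{\HFL}(L)$ has rank $4$ in its maximal non-trivial $A_2$ grading with the summand structure of Lemma~\ref{lem:maxgradingmasloov}, the classification tells us that $L_2$ is a braid axis for its complementary component after decomposing along a longitudinal surface — concretely, that $L$ is an augmented clasp-closure (or possibly an augmented braid-closure) of some braid $\beta$, with $L_2$ the axis and $L_1$ the corresponding (clasp- or braid-) closure. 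I would then rule out the braid-closure case: the top $A_2$ summand of an augmented \emph{braid}-closure has rank $2$ by Martin's result, not $4$, so the $\mathring{b}$ case is excluded and $L=\mathring{c}(\beta)$.

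It then remains to pin down the braid index and the knot type of the closure. Using the spectral sequences from $\widehat{\HFL}(L)$ to $\widehat{\HFL}(L_i)\otimes V[\tfrac{\lk}{2}]$ exactly as in the proof of Theorem~\ref{thm:HFLL6a2}, the axis $L_2$ must be an unknot (its $\widehat{\HFL}$ is supported only in Alexander grading $0$), and $L_1$ has the $\widehat{\HFL}$ of an unknot as well, hence $c(\beta)$ is the unknot. Finally I would match the braid index: the maximal non-trivial $A_2$ grading of $\mathring{c}(\beta)$ for a $k$-braid $\beta$ is determined by the Thurston norm of the axis complement (equivalently, by how large a Seifert surface the axis bounds in the exterior), which grows with $k$; since this grading equals $\tfrac{3}{2}$, matching the $3$-braid case, we get that $\beta$ is a $3$-braid. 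Putting this together, $L=\mathring{c}(\beta)$ for a $3$-braid $\beta$ whose clasp-closure is the unknot, which is the claim.

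The main obstacle I expect is the application of~\cite[Theorem 5.1]{binns2024floer}: one must verify that the precise graded form of the top $A_2$ summand forced by Lemma~\ref{lem:maxgradingmasloov} — not merely its rank — falls under the hypotheses of that classification theorem, and then extract from its conclusion the structural statement that $L$ arises as an augmented clasp-closure (rather than merely that $L_2$ has some abstract sutured decomposition). A secondary subtlety is the braid-index bookkeeping in the last step: one needs that the maximal $A_2$ grading of $\mathring{c}(\beta)$ genuinely distinguishes the $3$-braid case from the $1$- and $2$-braid cases (the latter being the twisted Whitehead patterns), which should follow from a Thurston-norm computation for the axis complement but deserves to be spelled out.
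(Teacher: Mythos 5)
Your overall framing is right---non-splitness/linking number via the Conway polynomial, the structural input from Lemma~\ref{lem:maxgradingmasloov}, and an appeal to~\cite[Theorem 5.1]{binns2024floer} in place of Martin's braid-axis result---but the step where you apply that classification is exactly where the proposal has a genuine gap, and you half-acknowledge this yourself. The conclusion of~\cite[Theorem 5.1]{binns2024floer} is not ``$L_2$ is an axis and $L$ is an augmented clasp- or braid-closure.'' It is a four-way classification: (i) $L_2$ is a genus one \emph{fibered} knot and $L_1$ is a clasp-braid with axis $L_2$; (ii) $L_2$ is a genus one \emph{nearly fibered} knot and $L_1$ is braided with respect to it; (iii) $L_2$ is fibered and $L_1$ is isotopic to a simple closed curve on a minimal genus Seifert surface for $L_2$; (iv) $L_1$ is a clasp-closure with $L_2$ its unknotted axis. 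The substance of the paper's proof is ruling out (i)--(iii): case (i) is excluded because a genus one fibered axis would force the maximal non-trivial $A_2$ grading to be $\tfrac{5}{2}$ rather than $\tfrac{3}{2}$ by Thurston norm detection~\cite{ozsvath2008linkFloerThurstonnorm}; case (ii) is excluded by running the spectral sequence to $\widehat{\HFK}(L_2)\otimes V$, noting it collapses on the rank-four top grading, and comparing the resulting Maslov-grading pattern $\F_{-1}\oplus\F_0^2\oplus\F_1$ against Baldwin--Sivek's classification of genus one nearly fibered knots, whose top Alexander grading sits in a single Maslov grading; case (iii) is excluded because the linking number is nonzero. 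None of this appears in your proposal.

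Instead, you rule out an ``augmented braid-closure'' alternative by a rank $2$ versus rank $4$ comparison---but that alternative is not among the cases of the classification (rank four in the top grading is the hypothesis under which the theorem is invoked), so that step addresses a non-issue while the three real cases go unhandled. The Maslov-grading refinement of Lemma~\ref{lem:maxgradingmasloov} that you flag as a possible technicality is in fact essential: it is precisely what kills the nearly fibered case. Separately, your last paragraph (showing the closure and axis are unknots via the component spectral sequences) proves more than the lemma asserts and is not needed here; the index-$3$ bookkeeping you worry about is handled by the linking number $\pm1$ together with the maximal $A_2$ grading $\tfrac{3}{2}$, consistent with what the paper does implicitly.
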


\begin{proof}
   Suppose $L$ is as in the statement of the Lemma. We first claim that $L$ has linking number $\pm1$. Note that augmented clasp-closure of $3$ braids have linking number $\pm1$. Now recall that the Conway polynomial --- and hence link Floer homology --- detects the linking number of{ }two{-}component link{s}~\cite{hoste1985firstcoefficientoftheconwaypolynomial}. The claim follows.
   
   Now, after relabeling the components of $L$ if necessary, we may assume that the component of $\widehat{\HFL}(L)$ with maximal non-trivial Alexander grading of rank four is $L_2$ and that the maximal non-trivial $A_2$ grading is $\frac{3}{2}$ and that $\widehat{\HFL}(L,A_2=\frac{3}{2})$ is given by ${\F_{-1}[-1]\oplus\F_{0}^2[0]\oplus\F_1[1]}$, up to shifts in the $A_1$ and Maslov gradings by Lemma~\ref{lem:maxgradingmasloov}. We now bound the genus of the component $L_2$. Recall that there is a spectral sequence from $\widehat{\HFL}(L)$ to $\widehat{\HFL}(L_2)\otimes V[\pm\frac{1}{2}]${. I}t follows that the maximum non-trivial Alexander grading in which $L_1$ can have non-trivial support is at most one.
   
  By~\cite[Theorem 5.1]{binns2024floer} we have four cases to treat:\begin{enumerate}

       \item  $L_2$ is a genus one fibered knot and $L_1$ is a clasp-braid with axis $L_2$.
       \item $L_2$ is a genus one nearly fibered knot and $L_1$ is a braid-closure with axis $L_2$.
       \item $L_2$ is a fibered knot and $L_1$ can be isotoped to a simple closed curve in a minimal genus Seifert surface for $L_2$.
      
       \item $L_1$ is a clasp-closure with $L_2$ its unknotted axis .
       
  \end{enumerate}

  For definitions of ``nearly fibered" see~\cite{baldwin2022floer}. For a definition of what it is to be braided with respect to a nearly fibered knot see~\cite[Section 3]{binns2024floer}. We rule out the first three of the four possibilities.
  
  For the first case observe that the maximal Euler characteristic of a longitudinal surface for $L_2$ would be $-3$, so that the maximal $A_2$ grading in which $\widehat{\HFL}(L)$ would be non-trivial support would be $\frac{5}{2}$ by~\cite[Theorem 1.1]{ozsvath2008linkFloerThurstonnorm}, a contradiction.

For the second, recall that there is a spectral sequence from $\widehat{\HFL}(L)$ to $\widehat{\HFK}(L_2)\otimes V[\pm\frac{1}{ 2}]$. Since in the maximal non-trivial $A_2$ grading $\widehat{\HFL}(L)$ is of rank four, as is the rank of the maximal non-trivial Alexander grading of $\widehat{\HFK}(L_2)\otimes V[\pm\frac{1}{ 2}]$, it follows that this spectral sequence collapses immediately. In particular{,} it follows that the component of $\widehat{\HFK}(L_2)\otimes V$ in maximal non-trivial Alexander grading is given up to an overall shift in Maslov grading by $\F_{-1}\oplus\F_{0}^2\oplus\F_{1}$. Now, Baldwin-Sivek classified all genus one nearly fibered knots~\cite{baldwin2022floer}.{All} such knots have the property that their knot Floer homology in Alexander grading one is supported in exactly one Maslov grading --- see~\cite[Table 1]{baldwin2022floer} --- a contradiction.

The third case is immediately excluded by the fact that the linking number of $L$ is non-zero, {concluding the proof}.
\end{proof}

\begin{lemma}\label{lem:extra}
  {Suppose $L$ is a link with the link Floer homology of either the Mazur link or $\mathring{c}(\sigma_2^{-1}\sigma_1\sigma_2)$. Then $L$ has unknotted components each of which is an unknotted clasp closure with respevct to the other.}
\end{lemma}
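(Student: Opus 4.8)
The plan is to combine Lemma~\ref{lem:3clasprepunknot} with the ``non-split, linking-number $\pm 1$'' constraints already extracted in its proof, and then run the same basepoint-spectral-sequence argument used for Theorem~\ref{thm:HFLL6a2} and Theorem~\ref{thm:HFLl9n15} on both components. First I would note that both the Mazur link $L7a6$ and $\mathring{c}(\sigma_2^{-1}\sigma_1\sigma_2)$ are augmented clasp-closures of $3$-braids representing the unknot (the first is the augmentation of $\sigma_1^{-1}$ on Baldwin--Sivek's list, the second is the $n=0$ case of family (3)). Hence by Lemma~\ref{lem:3clasprepunknot}, any $L$ with the link Floer homology type of either of these links is itself an augmented clasp-closure of some $3$-braid $\beta$: write $L_2$ for the unknotted axis and $L_1=c(\beta)$ for the clasp-closure, so $L_2$ is automatically unknotted and $L_1$ is a clasp-closure with respect to $L_2$. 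It remains to show $L_1$ is also an unknot and that $L_2$ is an unknotted clasp-closure with respect to $L_1$.

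For the first point, I would apply the basepoint spectral sequence of~\cite[Proposition 7.1]{HolomorphicdiskslinkinvariantsandthemultivariableAlexanderpolynomial} from $\widehat{\HFL}(L)$ to $\widehat{\HFL}(L_1)\otimes V[\tfrac{\lk(L)}{2}]$. Exactly as in the proof of Lemma~\ref{lem:3clasprepunknot}, the link Floer polytope of $L$ agrees with that of the model link, which by~\cite[Theorem 1.1]{ozsvath2008linkFloerThurstonnorm} pins down the Thurston norm; since the model links have $\widehat{\HFL}$ supported only in $A_1$-gradings $\{-\tfrac12,\tfrac12\}$ (after the appropriate normalization — this can be read off the tables / computed directly for these small links), the spectral sequence forces $\widehat{\HFL}(L_1)$ to be supported in Alexander grading $0$ only, hence $L_1$ has Seifert genus $0$, i.e. $L_1$ is the unknot. (For $L7a6$ one has $\lk=\pm1$ and for $\mathring{c}(\sigma_2^{-1}\sigma_1\sigma_2)$ likewise $\lk=\pm1$, consistent with the linking-number computation already done via~\cite{hoste1985firstcoefficientoftheconwaypolynomial}.)

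Finally, for the symmetric statement that $L_2$ is an unknotted clasp-closure with respect to $L_1$: since $L_1$ is now known to be an unknot, the complement of $L_1$ is a solid torus, and $L_2$ sits inside it as a knot with winding number $\lk(L)=\pm1$. The structural classification of clasp-closures of $2$-braids (the twisted Whitehead patterns) together with~\cite[Theorem 5.1]{binns2024floer}, applied now with the roles of the components reversed — i.e. using that $\widehat{\HFL}(L,A_1=\tfrac32)$ has rank four and is of the shape $\F_{-1}[-1]\oplus\F_0^2[0]\oplus\F_1[1]$ by Lemma~\ref{lem:maxgradingmasloov} (which applies since $L_1$ is a knot) — shows that $L_2$ is a clasp-braid with axis $L_1$; being a simple closed curve in the solid torus complement of $L_1$ with winding number $\pm1$, it is in fact an unknotted clasp-closure with respect to $L_1$. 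The main obstacle is the symmetric step: one must check that Lemma~\ref{lem:3clasprepunknot}'s case analysis still runs with the components exchanged, which requires verifying the analogue of the Euler-characteristic and Maslov-grading obstructions for $L_2$ in the solid torus complement of $L_1$ — but these follow from the same polytope and basepoint-spectral-sequence inputs together with the observation that $L_2$ is unknotted, which we have already established.
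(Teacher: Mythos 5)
Your overall plan --- apply Lemma~\ref{lem:3clasprepunknot} once and then treat the second component by a symmetric argument --- is the right idea, but the step in which you conclude that $L_1$ is unknotted rests on a false premise. The Mazur link and $\mathring{c}(\sigma_2^{-1}\sigma_1\sigma_2)$ do \emph{not} have $\widehat{\HFL}$ supported only in $A_1$-gradings $\pm\frac{1}{2}$: for both model links each component is a clasp-closure of a $3$-braid with respect to the other, so the maximal non-trivial $A_1$ grading is $\frac{3}{2}$ (this is exactly the fact used later, in the proof of Theorem~\ref{thm:l7a6plus} and Theorem~\ref{thm:infinitefamily}, to rule out $\mathring{c}(\sigma_1^3\sigma_2^{-1}\sigma_1^2\sigma_2)$, whose maximal $A_1$ grading is $\frac{5}{2}$). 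Were the $A_1$ span really $[-\frac{1}{2},\frac{1}{2}]$, $L_1$ would be a meridian of $L_2$ and $L$ would be a Hopf link, which is absurd. With the correct gradings, the basepoint spectral sequence to $\widehat{\HFL}(L_1)\otimes V[\frac{\lk(L)}{2}]$ only bounds the Seifert genus of $L_1$ by one, so trefoil and figure-eight components are not excluded and the unknottedness of $L_1$ does not follow from this step. Note also that your third paragraph contradicts your second: there you use that $A_1=\frac{3}{2}$ is the maximal non-trivial $A_1$ grading and that $\widehat{\HFL}(L,A_1=\frac{3}{2})$ has rank four.

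The repair is essentially the paper's proof. First record that for both model links each component is a clasp-closure of a $3$-braid with respect to the other; consequently Lemma~\ref{lem:maxgradingmasloov} determines $\widehat{\HFL}$ of the model links --- and hence of $L$ --- in the maximal non-trivial $A_1$ grading as well as the maximal $A_2$ grading, namely as $\F_{-1}[-1]\oplus\F_0^2[0]\oplus\F_1[1]$ in grading $\frac{3}{2}$. Then re-run the \emph{entire} argument of Lemma~\ref{lem:3clasprepunknot} with the components exchanged: the genus bound on $L_1$ via the spectral sequence, the four cases of~\cite[Theorem 5.1]{binns2024floer}, and the Euler-characteristic, Maslov-grading and linking-number obstructions that kill cases (1)--(3). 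The surviving case states that $L_2$ is a clasp-closure with $L_1$ its unknotted axis, which delivers the unknottedness of $L_1$ and the second clasp-closure statement simultaneously; you should not first claim $L_1$ is unknotted and then condition the symmetric case analysis on that claim, since that is precisely the part your spectral-sequence argument fails to establish. (A minor point: the appeal to twisted Whitehead patterns is misplaced --- those are clasp-closures of $2$-braids and have winding number zero, incompatible with linking number $\pm1$.)
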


\begin{proof} {Suppose $L$ is as in the statement of the Lemma. First observe that the for both the Mazur link and $\mathring{c}(\sigma_2^{-1}\sigma_1\sigma_2)$, each component is a clasp-closure of a $3$-braid with respect to the other. The claim now follows from the same argument as given in the previous lemma, but now applied to both components of $L$.}
\end{proof}

By the preceding lemma{s}, to complete the proofs of Theorem~\ref{thm:l7a6plus} and Theorem~\ref{thm:infinitefamily} it suffices to {show that} link Floer homologies {distinguished between appropriate} augmentations of clasp-closures of $3$-braids representing the unknot.

We first address the links corresponding to the infinite family of braids $\sigma^n\sigma_2^{-1}\sigma_1\sigma_2$. It can be checked that the unoriented resolution of this link at the crossing shown in Figure~\ref{fig:inffamilyres} results is the split sum of a Hopf link and an unknot. Recall that J.Wang showed that if $L_b$ is a band sum of the split union of two links $L_1\sqcup L_2$ then the link Floer homology of {$L_b$} does not change after adding twists to the band~\cite[Remark 1.18]{Wangcosmetic}. Thus the links $\mathring{c}(\sigma^n\sigma_2^{-1}\sigma_1\sigma_2)$ all have the same link Floer homology.

\begin{center}
 \begin{figure}[h]
 \includegraphics[width=5cm]{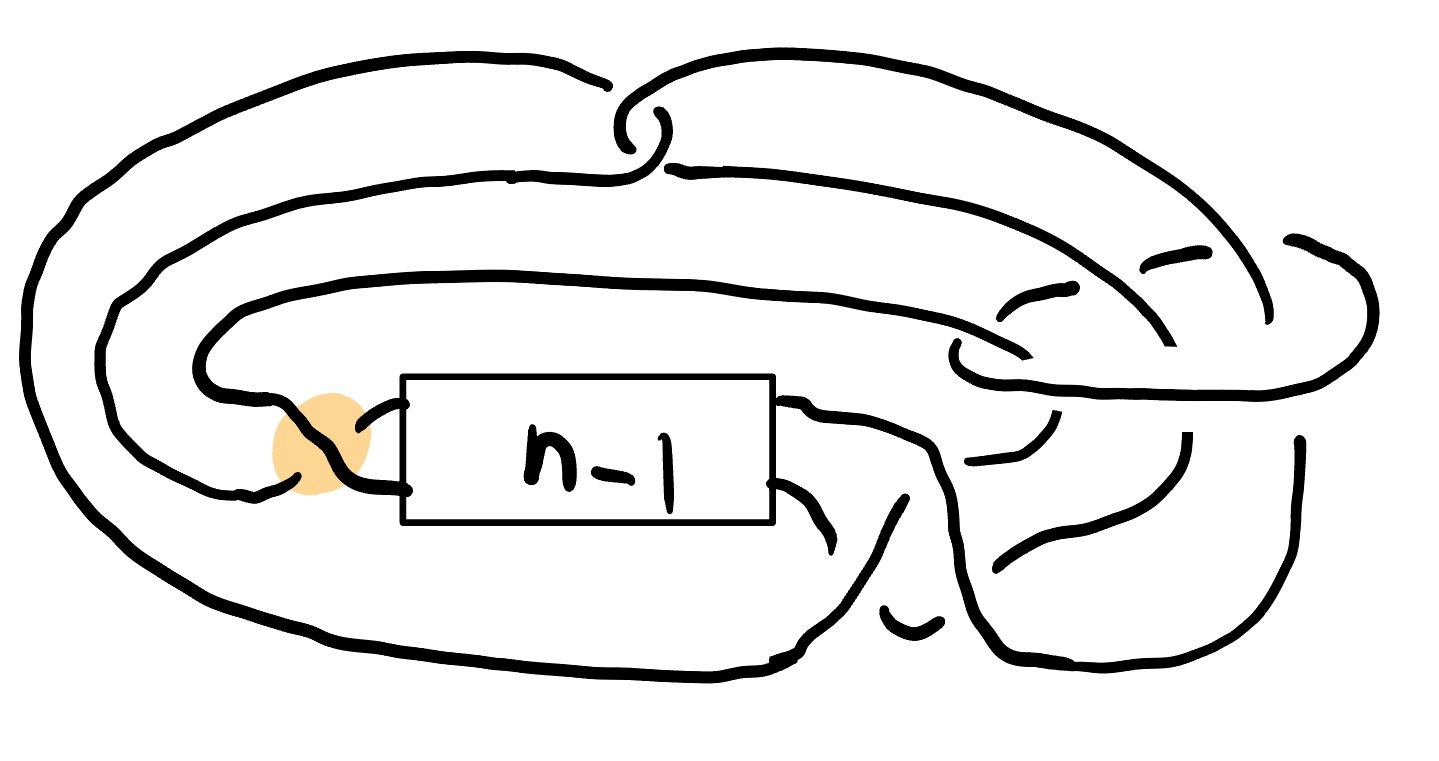}
    \caption{The link $\mathring{c}(\sigma^n_1\sigma_2^{-1}\sigma_1\sigma_2)$. We consider the unoriented resolution of the crossing highlighted in orange.}\label{fig:inffamilyres}
   \end{figure}
\end{center}

We can now conclude the proofs of two of the results promised in the introduction.

\begin{proof}[Proof of Theorem~\ref{thm:l7a6plus} and Theorem~\ref{thm:infinitefamily}]

{Suppose that $L$ is a link as in one the two Theorem statements.} By Lemma~\ref{lem:3clasprepunknot} {and Lemma}~\ref{lem:extra}, {both components of $L$ are clasp-closures of index $3$-braids with respect to the other component. In particular, the maximum $A_i$ gradings in which $\widehat{\HFL}(L)$ are non-trivial are $\frac{3}{2}$. On the other hand, $\mathring{c}(\sigma_1^3\sigma_2^{-1}\sigma_1^2\sigma_2)$ does not have this property; one of the components --- say $L_1$ --- is the clasp-closure of an index $5$ braid. Consequently, the maximum $A_1$ grading in which $\widehat{\HFL}(\mathring{c}(\sigma_1^3\sigma_2^{-1}\sigma_1^2\sigma_2))$ is non-trivial is $\frac{5}{2}$. It follows that $L$ cannot be $\mathring{c}(\sigma_1^3\sigma_2^{-1}\sigma_1^2\sigma_2)$.}  By~\cite[Remark 1.18]{Wangcosmetic}, ${\widehat{\HFL}(\mathring{c}(\sigma_1^n\sigma_2^{-1}\sigma_1\sigma_2))\cong \widehat{\HFL}(\mathring{c}(\sigma_2^{-1}\sigma_1\sigma_2))}$ for all $n$. {It thus} suffices to show that no two of the Mazur link, 
 $\mathring{c}(\sigma_2^{-1}\sigma_1\sigma_2)$, and their mirrors have the same link Floer homology.

   Now, given that link Floer homology detects the linking number of two{-}component links, the transformation property of link Floer homology under changing the orientation of a link component implies that for a two{-}component link, $L$, $\widehat{\HFL}(L)$ determines the Alexander polynomial of $L$ endowed with an arbitrary orientation. {For the two links at hand, these} are given as follows:
\begin{enumerate}
  
       \item $\Delta(\mathring{c}(\sigma_1^{-1}))=2-5t+5t^2-2t^3$.
       \item $\Delta(\mathring{c}'(\sigma_1^{-1}))=1-3t+3t^2-3t^3+3t^4-t^5$.

       \item $\Delta(\mathring{c}(\sigma_2^{-1}\sigma_1\sigma_2))=2-7t+7t^2-2t^3$,
       \item $\Delta(\mathring{c}'(\sigma_2^{-1}\sigma_1\sigma_2))=1-3t+5t^2-5t^3+3t^4-t^5$.

\end{enumerate}
      Here the primes indicate that the orientation of the axis has been reversed.  The author used~\cite{KLO} for these computations. {Note that the Alexander polynomial of a link is unchanged under mirroring. Thus link Floer homology distinguishes the Mazur link and its mirror, from
 $\mathring{c}(\sigma_2^{-1}\sigma_1\sigma_2)$ and its mirror. Since the Mazur link and its mirror have linking numbers $\pm1$ --- and likewise for $\mathring{c}(\sigma_2^{-1}\sigma_1\sigma_2)$ --- the Alexander polynomials distinguishes each pair of links. Consequently link Floer homology distinguishes between the four links, concluding the proof.}
\end{proof}

\begin{remark}
    The Mazur link and its reverse are isotopic, so for this link we have oriented link detection on the nose.
\end{remark}

\section{Khovanov Homology}\label{sec:Kh}

The goal of this section is to prove the following two results:

\begin{theorem}\label{thm:khl6a2}
    Khovanov homology  with integer coefficients detects $L6a2$.
\end{theorem}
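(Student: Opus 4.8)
The plan is to follow the strategy flagged in the introduction: use Dowlin's spectral sequence to reduce the question to one about link Floer homology, and then invoke Theorem~\ref{thm:HFLL6a2}.

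So suppose $L$ is a link with $\Kh(L;\Z)\cong\Kh(L6a2;\Z)$. First I would extract classical information about $L$. Since $\Kh(L;\Z)$ determines the Jones polynomial $V_L$ and $V_L(1)=(-2)^{c-1}$, where $c$ is the number of components, $L$ has two components. Since the Jones polynomial of a split link vanishes at $t=-1$, while $V_{L6a2}(-1)=\pm\det(L6a2)$ is nonzero (as $L6a2$ is non-split and alternating), $L$ is non-split. And since $L6a2$ is alternating, $\Kh(L6a2;\Z)$ --- hence $\Kh(L;\Z)$, and hence $\widetilde{\Kh}(L;\Q)$ --- is thin.

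Next I would run Dowlin's spectral sequence $\widetilde{\Kh}(L;\Q)\Rightarrow\widehat{\HFK}(L;\Q)$~\cite{dowlin2018spectral}, tracking gradings via pointed Khovanov homology exactly as in the proof of Theorem~\ref{thm:HFLl9n15} and~\cite[Lemma 2.11]{baldwin2017khovanovpointed}. Its higher differentials strictly change the $\delta$-grading, so thinness of $\widetilde{\Kh}(L;\Q)$ forces the spectral sequence to degenerate; comparing with the (equally degenerate) spectral sequence for $L6a2$ identifies $\widehat{\HFK}(L;\Q)$ with $\widehat{\HFK}(L6a2;\Q)$ as graded groups, so $\widehat{\HFL}(L;\Q)$ is thin, with the same total rank and the same Maslov-graded collapsed-Alexander-graded structure as $\widehat{\HFL}(L6a2;\Q)$. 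As $\widehat{\HFL}(L;\Q)$ determines the Conway polynomial, hence the linking number~\cite{hoste1985firstcoefficientoftheconwaypolynomial}, we get $\lk(L)=\lk(L6a2)$; combining this with thinness, the collapsed-grading data, and the behaviour of link Floer homology under reversing a component should recover the full Alexander bigrading, so $\widehat{\HFL}(L;\Q)\cong\widehat{\HFL}(L6a2;\Q)$.

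Finally I would rerun the proof of Theorem~\ref{thm:HFLL6a2} with rational coefficients: $L$ is non-split; the rank of $\widehat{\HFL}(L;\Q)$ in the maximal nontrivial $A_1$ grading is two, so $L_1$ is a braid axis by Martin's braid-axis detection result~\cite[Proposition 1]{martin2022khovanov}; the linking number together with the spectral sequences $\widehat{\HFL}(L;\Q)\Rightarrow\widehat{\HFL}(L_i;\Q)\otimes V$ forces each $L_i$ to be an unknot; and by Murasugi's classification~\cite{Murasugiclosed3} the only augmented braid-closures of $3$-braids representing the unknot are $T(2,\pm 6)$ and $L6a2$, which have distinct link Floer homology. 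Hence $L\cong L6a2$. The step I expect to be the main obstacle is the grading reconstruction in the previous paragraph: Dowlin's spectral sequence only retains the single collapsed Alexander grading, so genuine work --- via thinness, the linking number, and the behaviour of $\widehat{\HFL}$ under orientation reversal --- is needed before Martin's braid-axis detection, which is phrased in terms of the Alexander grading of a single component, can be applied. One must also check that Martin's result and the remaining steps carry over to $\Q$ coefficients; this should be routine, and a rational version of Martin's result is already used in the proof of Theorem~\ref{thm:HFLl9n15}.
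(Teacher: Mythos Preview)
Your high-level strategy---pass through Dowlin's spectral sequence to link Floer homology, then appeal to Martin's braid-axis detection and Murasugi's classification---matches the paper's. But the step you flag as ``the main obstacle'' is a genuine gap, and your proposed fix does not close it.

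Dowlin's spectral sequence only sees the collapsed Alexander grading $A=A_1+A_2$, so even after you know $\widehat{\HFK}(L;\Q)\cong\widehat{\HFK}(L6a2;\Q)$ as $(M,A)$-graded vector spaces you do not know the bigraded $\widehat{\HFL}(L;\Q)$. Your idea of using orientation reversal is appealing---$\Kh(L')$ is indeed a grading shift of $\Kh(L)$, and $\widehat{\HFL}(L')$ is a regrading of $\widehat{\HFL}(L)$---but the two pieces of information you would obtain (the $A_1+A_2$ distribution and the $A_1-A_2$ distribution) are both recorded only as abstract graded ranks, with no way to match individual generators between the two pictures. For $L6a2$ there are, for instance, two generators in collapsed grading $A=2$ with identical Maslov grading (thinness forces this), and nothing you have written distinguishes whether they sit at $(A_1,A_2)=(\tfrac32,\tfrac12),(\tfrac12,\tfrac32)$ or both at $(\tfrac32,\tfrac12)$. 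Since Martin's result and the component-forgetting spectral sequences both require the individual $A_i$ gradings, your argument for unknottedness of the components and for the braid-axis property are both unsupported.

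The paper circumvents this entirely. First, it determines that $L$ has two unknotted components and $\lk(L)=-3$ \emph{before} touching Floer theory, using the Batson--Seed link-splitting spectral sequence (including its refined $l=h-q$ graded version) together with Kronheimer--Mrowka and Baldwin--Sivek detection, and the Lee spectral sequence for the linking number (Lemma~\ref{lem:l6a2unknottedlinking}). Second, rather than reconstructing the bigrading, the paper uses the author--Dey structural result \cite[Proposition~6.1]{binns2022rank} that a $\delta$-thin $\widehat{\HFL}$ decomposes into at most five four-dimensional ``boxes'' $W_a[b,c]$, and then does a short case analysis on how those boxes can sit. In the cases where the maximal $A_i$ grading would have rank four rather than two, the paper invokes the classification \cite[Theorem~5.1]{binns2024floer} to identify $L$ as an augmented clasp-closure, and then uses the Maslov-grading computation of Lemma~\ref{lem:maxgradingmasloov} to derive a contradiction. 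Only after this does Martin's braid-axis result apply. This case analysis---and the appeal to the clasp-closure classification---is the content you are missing.
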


\begin{theorem}\label{thm:KhL9n15}
    Khovanov homology with integer coefficients detects $L9n15$.
\end{theorem}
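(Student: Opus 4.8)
The strategy will mirror the proof of Theorem~\ref{thm:khl6a2}: translate the Khovanov-homological hypothesis into a statement about link Floer homology via Dowlin's spectral sequence, and then invoke the link Floer homology detection result for $L9n15$ already established as Theorem~\ref{thm:HFLl9n15}. Concretely, suppose $L$ is a link with $\Kh(L;\Z)\cong\Kh(L9n15;\Z)$. First I would record the elementary consequences of this isomorphism: the graded Euler characteristic of Khovanov homology is the Jones polynomial, which determines (for a two-component link) the number of components and the linking number up to sign; and the total rank $\rank(\Kh(L;\Z/2))=12$, together with Shumakovitch's result~\cite[Corollary 3.2.C]{shumakovitch2014torsion} and the universal coefficient theorem, gives $\rank(\Khr(L;\Q))\leq 6$. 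The key input is then Dowlin's spectral sequence~\cite{dowlin2018spectral}: combined with~\cite[Lemma 2.11]{baldwin2017khovanovpointed} relating reduced and pointed Khovanov homology, this yields $\rank(\widehat{\HFL}(L;\Q))\leq 12$, exactly as in the proof of Theorem~\ref{thm:HFLl9n15}.

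The substance of the argument is to upgrade this rank bound, plus the bigraded structure of $\Kh(L9n15)$, into enough control over $\widehat{\HFL}(L;\Q)$ to conclude $\widehat{\HFL}(L;\Q)\cong\widehat{\HFL}(L9n15;\Q)$, at which point Theorem~\ref{thm:HFLl9n15} finishes the job. Here I would lean on the fact that Dowlin's spectral sequence respects the $\delta=(\text{quantum})-2(\text{homological})$ grading on the Khovanov side and the Alexander/Maslov-type gradings on the link Floer side, so that the support of $\widehat{\HFL}(L;\Q)$ is constrained by the support of $\Kh(L9n15)$; one reads off from Table~\ref{tab:Khl9n15} that there is essentially no room for the spectral sequence to run nontrivially in the relevant gradings, forcing the ranks to match. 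Then, exactly as in Theorem~\ref{thm:HFLl9n15}, the Thurston-norm detection of~\cite{ozsvath2008linkFloerThurstonnorm} and Martin's braid axis detection~\cite[Proposition 1]{martin2022khovanov} identify $L$ as the augmented braid-closure of a $3$-braid whose closure is $T(2,-3)$, and Birman–Menasco's classification~\cite{BirmanMenasco3braids} leaves only $\mathring{b}(\sigma_1^{-1}\sigma_2^{-3})$ and $\mathring{b}(\sigma_1\sigma_2^{-3})=L9n16$, which are distinguished by their Alexander polynomials --- data which in fact come from $\Kh$ itself via the Jones polynomial or via the recovered $\widehat{\HFL}$.

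The main obstacle I anticipate is the bookkeeping needed to push Dowlin's rank bound up to an honest isomorphism of link Floer homology: the spectral sequence a priori only gives $\rank\widehat{\HFL}(L;\Q)\leq\rank$ of (pointed) Khovanov homology, so I must argue that no differentials can cancel, which requires pinning down the bigradings precisely and checking a parity/symmetry obstruction in each Alexander grading (using that $\widehat{\HFL}$ is symmetric under $A_i\mapsto -A_i$ and that its total rank over $\Q$ is odd for a knot-like component, as in the proof of Theorem~\ref{thm:HFLl9n15}). A secondary subtlety is that one must work with integer coefficients on the Khovanov side to have access to~\cite{shumakovitch2014torsion} and thereby bound the rational reduced rank; I would be careful to state at the outset that the hypothesis is an isomorphism of integral Khovanov homology and that all torsion information is thereby available. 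Once the link Floer homology is matched, the remainder is a direct citation of the already-proved Theorem~\ref{thm:HFLl9n15} together with the orientation discussion of Remark~\ref{rem:L9n15orientations}.
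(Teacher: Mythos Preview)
Your plan has a genuine structural gap. You want to recover $\widehat{\HFL}(L;\Q)\cong\widehat{\HFL}(L9n15;\Q)$ from the Khovanov data and then cite Theorem~\ref{thm:HFLl9n15}, but Dowlin's spectral sequence cannot do this. It abuts to $\widehat{\HFK}(L;\Q)$ and respects only a single $\delta$-type grading; it carries no information about the Alexander \emph{multi}grading on $\widehat{\HFL}$. So even if you could argue that the spectral sequence collapses, you would only match $\widehat{\HFK}$ in its $\delta$-grading, which is far weaker than the hypothesis of Theorem~\ref{thm:HFLl9n15}. Your proposed fix---``read off from Table~\ref{tab:Khl9n15} that there is essentially no room for the spectral sequence to run''---does not address this, and in fact the paper never claims the spectral sequence collapses.

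The paper's proof does not try to reconstruct $\widehat{\HFL}(L)$ from Khovanov homology. Instead it determines the components of $L$ \emph{independently} of link Floer homology, using the Batson--Seed link splitting spectral sequence~\cite{batson2015link} (which you do not mention) together with the refined $l=h-q$ grading to force $L_1$ to be an unknot and $L_2$ to be $T(2,-3)$; the linking number comes from the Lee spectral sequence, not from the Jones polynomial as you suggest. Only after the components are known does the paper use the Dowlin rank bound, together with the component spectral sequences $\widehat{\HFL}(L)\Rightarrow\widehat{\HFL}(L_i)\otimes V$, to pin down enough of the $A_i$-graded structure of $\widehat{\HFL}(L;\Q)$ to apply Martin's braid axis detection directly. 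The endgame (Birman--Menasco, then distinguish $L9n15$ from $L9n16$) is as you describe, but the two candidates are separated by their Khovanov homologies, not their Alexander polynomials. In short: the missing ingredient is Batson--Seed, and the flawed step is the expectation that Dowlin recovers the multigraded link Floer homology.
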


Given Martin's result that Khovanov homology detects $T(2,6)$ oriented as a $2$-braid closure~\cite{martin2022khovanov} --- from which it can be deduced that Khovanov homology detects $T(2,\pm 6)$ with both orientations --- we have that Khovanov homology detects all augmentations of braid-closures of $3$-braids representing the unknot.

\subsection{A review}
{We begin with a brief review of Khovanov homology. \emph{Khovanov homology} is an invariant of oriented links due to Khovanov}~\cite{khovanov2000categorification}{. It assigns to each link a $\Z$-module equipped with two gradings; the first called the \emph{quantum grading} and the second called the \emph{homological grading}. The Khovanov homology of an $n$-component link is supported in quantum gradings whose parity agrees with that of $n$. Taking coefficients in a field, the Khovanov homology of $L$ admits a link splitting spectral sequence to the Khovanov homology of the disjoint union of the underlying components of $L$. This is due to Batson-Seed}~\cite{batson2015link}{. Taking coefficients in $\Q$, Khovanov homology admits a spectral sequence to an invariant called \emph{Lee homology}, a result of}~\cite{lee2005endomorphism}{. Lee homology is a finitely generated $\Q$-vector space equipped with a homology grading compatible with that of Khovanov homology under the Lee spectral sequence. For a two component link $L$, the Lee homology of $L$ is of rank four,  with rank two in homological grading zero and rank four in homological grading the linking number of $L$.

There is a version of Khovanov homology called \emph{pointed Khovanov homology} due to Baldwin-Levine-Sarkar}~\cite{baldwin2017khovanovpointed}{. Taking coefficients in $\Q$, pointed Khovanov homology admits a spectral sequence to knot Floer homology, a result due to Dowlin}~\cite{dowlin2018spectral}{. Pointed Khovanov homology is a generalization of an invariant called \emph{reduced Khovanov homology.} Taking coefficients in $\Z/2$, the rank of reduced Khovanov homology is exactly half that of Khovanov homology, a result of Shumakovitch}~\cite[Corollary 3.2.C]{shumakovitch2014torsion}.
\subsection{Detection results}
For the reader's convenience we note that the Khovanov homology of L6a2 is given by;

  \begin{center}

    \begin{tabular}{|c|c|c|c|c|c|c|c|}
    \hline
    \backslashbox{\!$q$\!}{\!$h$\!}
     & $-6$& $-5$&$-4$&$-3$ &$-2$&$-1$& $0$  \\
\hline
$-2$& & & &  & & &$\Z$\\\hline
$-4$& & & &  & &$\Z$&$\Z$\\\hline
$-6$& & & &  &$\Z\oplus\Z/2$& & \\\hline
$-8$& & &  &$\Z\oplus\Z/2$&$\Z$& & \\\hline
$-10$& & &$\Z\oplus\Z/2$&$\Z$& & &  \\\hline
$-12$& &$\Z/2$&$\Z$&  & & & \\\hline
$-14$&$\Z$&$\Z$& &  & & & \\\hline
$-16$&$\Z$& &  & & & & \\\hline
  \end{tabular} 
\end{center}

See~\cite{knotatlas}.

\begin{lemma}\label{lem:l6a2unknottedlinking}
    Suppose $L$ is a link with the Khovanov homology of the $L6a2$. Then $L$ is a two{-}component link with each component an unknot. Moreover $\lk(L)=-3$.
\end{lemma}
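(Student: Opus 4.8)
The plan is to extract the three asserted facts — that $L$ has two components, that each is unknotted, and that $\lk(L)=-3$ — from standard structural properties of Khovanov homology together with the explicit table above. First I would pin down the number of components: the Khovanov homology of $L6a2$ is supported in even quantum gradings, so $L$ has an even number of components, and its total rank ($12$ over $\Z$, hence the free rank is $8$, or one reads off the $\Q$-Betti numbers) together with the fact that Lee homology of an $n$-component link has rank $2^n$ forces $n=2$: indeed the Lee spectral sequence converges to a space of rank $2^n$, and since the $E_1$-page has total rank equal to $\rank \Kh(L;\Q)$ we need $2^n \le \rank\Kh(L6a2;\Q)$, while $n=0$ is impossible (the homology is not that of the unknot) and $n\ge 4$ would force total rank $\ge 16$; so $n=2$. (Alternatively, one invokes the detection of link components by the number of Lee generators directly.)

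Next I would compute the linking number. For a two-component link $L$, the Lee homology is rank $4$, supported in homological gradings $0$ and $\lk(L)$, with rank $2$ in each (as recalled in the review subsection). Since the Lee spectral sequence has differentials that shift homological grading, the set of homological gradings supporting Lee homology is contained in the set of homological gradings supporting $\Kh(L;\Q)$. From the table, $\Kh(L6a2)$ is supported in homological gradings $-6,\dots,0$, with a persistent (no $\Z/2$) class in homological grading $0$ and, after passing to $\Q$ coefficients, a surviving rank-$2$ contribution in homological grading $-6$ at the two extreme quantum gradings $q=-14,-16$. The only homological grading besides $0$ that can carry the rank-$2$ part of Lee homology is $-6$, so $\lk(L)=-6$ or $\lk(L)=0$; the $q$-grading range (the width $-16$ to $-2$, i.e.\ shifted quantum gradings) rules out $\lk(L)=0$ — a link with linking number $0$ would have its Lee generators all in homological grading $0$ — leaving $\lk(L)=-6$. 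Hmm: one must be careful here about the normalization relating the homological grading of the extreme Lee class to the linking number; I would cite the precise statement (the rank-$2$ part of Lee homology of a $2$-component link sits in homological grading $2\lk(L)$, not $\lk(L)$, in Lee's conventions), which then gives $2\lk(L)=-6$, i.e.\ $\lk(L)=-3$, consistent with the claim.

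Finally I would show each component is an unknot. Apply the Batson--Seed link-splitting spectral sequence over a field: it goes from $\Kh(L)$ to $\Kh(U_1 \sqcup U_2)$ where $U_i$ are the components with their induced framings forgotten, i.e.\ $\Kh(L;\F)$ has rank at least $\rank\bigl(\Kh(L_1;\F)\otimes\Kh(L_2;\F)\bigr)$. Over $\F_2$ this reads $12 \ge \rank\Kh(L_1;\F_2)\cdot\rank\Kh(L_2;\F_2)$; since each factor is at least $2$ (unknot) and Khovanov rank over $\F_2$ is always even, the only possibilities are $2\cdot 2$, $2\cdot 4$, $2\cdot 6$, or $4\cdot 2$, $6\cdot 2$ — in particular at least one component, say $L_1$, has $\rank\Kh(L_1;\F_2)=2$, hence is the unknot by Kronheimer--Mrowka's unknot detection (or by the fact that rank-$2$ Khovanov homology forces the unknot). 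For the other component I would use that $\widehat{\HFK}$/Seifert-genus information is accessible: the Batson--Seed bound together with the quantum-grading support of $\Kh(L6a2)$ (which has width consistent only with both components unknotted — a non-trivial component would widen the $q$-range of the tensor product beyond what a rank-$12$ group supported in $q\in[-16,-2]$ can absorb) pins down $\rank\Kh(L_2;\F_2)=2$ as well. I expect \textbf{this last step — ruling out the possibility that one component is a non-trivial knot while the total Khovanov rank stays $12$ — to be the main obstacle}, and I would handle it by a careful bookkeeping of quantum gradings in the Batson--Seed spectral sequence (the extreme $q$-gradings of $\Kh(L_i)$ are controlled by the genus and the self-linking data), possibly supplemented by the parity/torsion constraints from Shumakovitch's relation between reduced and unreduced Khovanov homology over $\F_2$.
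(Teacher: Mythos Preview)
Your overall architecture matches the paper's: parity of quantum gradings for the component-count parity, the Lee spectral sequence for the linking number, and Batson--Seed for unknottedness of the components. Two concrete issues, one minor and one substantive.

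First, a numerical slip: the rank of $\Kh(L6a2;\Z/2)$ is $20$, not $12$. Each of the four $\Z/2$-torsion summands in the table contributes two $\Z/2$-dimensions via the universal coefficient theorem, so the $\Q$-rank $12$ becomes $20$ over $\Z/2$. The Batson--Seed inequality therefore reads $20 \ge \prod_i \rank\Kh(L_i;\Z/2)$. (Incidentally, your component-count argument via Lee --- $2^n \le \rank\Kh(L;\Q)=12$ together with $n$ even, hence $n=2$ --- is cleaner than the paper's, which first bounds $n\le 4$ via Batson--Seed and then invokes the graded refinement to eliminate $n=4$.)

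Second, and this is the real gap you already flagged: you do not supply the mechanism that rules out a nontrivial second component. ``Careful bookkeeping of quantum gradings'' and ``extreme $q$-gradings controlled by genus'' are too vague to constitute an argument. The paper's route is: from Batson--Seed one first narrows $L_2$ to unknot or trefoil, using Kronheimer--Mrowka unknot detection together with Baldwin--Sivek's trefoil detection (reduced $\Z/2$-rank $\le 3$ forces unknot or trefoil). Then the key tool is the \emph{graded} Batson--Seed bound (\cite[Corollary 4.4]{batson2015link}): setting $l:=h-q$, for every $l$ one has
\[
\rank^{\,l}\Kh(L;\Q)\;\ge\;\rank^{\,l-\lk(L)}\bigl(\Kh(L_2;\Q)\otimes W\bigr),
\]
where $W$ is rank two, supported in $l$-gradings $\pm 1$. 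Plugging in the already-computed $\lk(L)=-3$ and the $l$-profile of $\Kh(T(2,\pm 3);\Q)$ produces a gradingwise violation, eliminating both trefoils. The specific ingredient you are missing is this $l$-graded refinement combined with the value $\lk(L)=-3$; without it the step you identify as the main obstacle does not close.
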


\begin{proof}
    Suppose $L$ is as in the statement of the theorem. Observe that $L$ has an even number of components since the quantum grading is supported in even gradings. The Batson-Seed link splitting spectral sequence~\cite{batson2015link}, together with an application of the universal coeffic{ie}nt theorem implies that \begin{align}\rank(\Kh(L;\Z/2))=20\geq\prod\rank(\Kh(L_i;\Z/2)).\label{eq:rankh}\end{align} Here{,} the product is taken over components $L_i$ of $L$. Since $\rank(\Kh(L_i;\Z/2))$ is of the form $2+4k_i$ for some $k_i\geq 0$, we must have that $L$ has at most four components. If $L$ has exactly four components{,} then each component is an un{knot} by~\cite{kronheimer2011khovanov}.  To check that this is impossible, we use the refined version of the Batson-Seed link splitting spectral sequence. Equip $\Kh(L;\Q)$ with the the $l:=h-q$ grading{. T}hen~\cite[Corollary 4.4]{batson2015link} implies that for some constant $t$ \begin{align}
        \rank^{l}(\Kh(L;\Z/2))\geq \rank^{l+t}(W^{\otimes4}).
    \end{align}

   Here, and for the remainder of this proof, $W$ is the rank two vector space supported in $l$ gradings $1$ and $-1$. In particular{,} there is some $l$ grading in which ${\rank(\Kh^l(L;\Z/2))\geq 6}$. This is false by inspection. Thus{,} $L$ has exactly two components. Observe that Equation~(\ref{eq:rankh}) implies that at least one component of $L$ is an unknot, since the unknot is the unique knot $K$ with $\rank(\Kh(K;\Z/2))=2$~\cite{kronheimer2011khovanov}. The remaining component $K_2$ of $L$ has ${\rank(\Kh(K_2;\Z/2))\leq 6}$, so that it is either an unknot or a trefoil by~\cite{kronheimer2011khovanov} and~\cite{baldwin2021khovanov}. 
    
    To see that $\lk(L)=-3$, recall that $\Kh(L;\Q)$ --- which can be obtained from  $\Kh(L;\Z)$ by an application of the universal coefficient theorem --- admits a spectral sequence to Lee homology~\cite{lee2005endomorphism}. Lee homology carries a homological grading and the spectral sequence respects this grading. Moreover, the Lee homology of a two{-}component link $L$ is supported in homological gradings $0$ and $2\lk(L)$, and that each such grading contributes a $\Q^2$ summand. By inspection of $\Kh(L;\Q)$ we see that $L$ must have linking number $-3$.
    
    To check that the remaining component of $L$ is also unknotted, we use the refined version of the Batson-Seed link splitting spectral sequence again. Equip $\Kh(L;\Q)$ with the the $l:=h-q$ grading then~\cite[Corollary 4.4]{batson2015link} implies that \begin{align}\label{eq:BSforlk=-3}
        \rank^l(\Kh(L;\Q))\geq \rank^{l-3}(\Kh(K_2;\Q)\otimes W),
    \end{align}

    \noindent where $W$ is the rank two vector space supported in $l$ gradings $1$ and $-1$.   Since $\Kh(T(2,3);\Q)$ has a generator in $l$ grading $-6$,  $\Kh(K_2;\Q)\otimes W$ has a generator of $l$ grading $-7$, violating the rank bound. Likewise since $\Kh(T(2,-3);\Q)$ has two generators in $l$ grading $3$,  $\Kh(T(2,-3);\Q)\otimes W$ has two generators in $l$ grading $2$ violating the rank bound. \end{proof}

The remainder of the proof of Theorem~\ref{thm:khl6a2} amounts to showing that there is a component of $L$ that is a braid axis for the other. To do so{,} we use Dowlin's spectral sequence from an appropriate version of Khovanov homology to $\widehat{\HFK}(L;\Q)$ to reduce this question to a question about link Floer homology~\cite{dowlin2018spectral}.
\begin{proof}[Proof of Theorem~\ref{thm:khl6a2}]
    Suppose $L$ is as in the statement of the Theorem. By the previous Lemma, $L$ has two components. Since $L$ has $\delta${-}thin Khovanov homology, we have that $\widehat{\HFL}(L;\Q)$ is $\delta$-thin. A result of the author and Dey~\cite[Proposition 6.1]{binns2022rank} implies in turn that $\widehat{\HFL}(L;\Q)$ decomposes as a direct sum of vector spaces of the form \begin{align*}W_a[b,c]:=\Q_{a-1}[b-\frac{1}{2},c-\frac{1}{2}]\oplus\Q_{a}[b+\frac{1}{2},c-\frac{1}{2}]\oplus \Q_{a}[b-\frac{1}{2},c+\frac{1}{2}]\oplus\Q_{a+1}[b+\frac{1}{2},c+\frac{1}{2}].\end{align*} Here $\Q_a[b,c]$ is a $\Q$ summand in $(A_1,A_2)$ grading $(b,c)$ of Maslov grading $a$. There are at most five of these summands since $20=\rank(\Kh(L;\Z/2))\geq \rank(\widehat{\HFK}(L;\Q))$, as follows form Dowlin's spectral sequence~\cite{dowlin2018spectral} together with the same steps applied in the corresponding stage of the proof of Theorem~\ref{thm:HFLl9n15}.
    
 Observe that if the span of an
    $A_i$ grading is $[\frac{-1}{2},\frac{1}{2}]$ then $L_i$ is a meridian of the other component, so that $L$ is a Hopf link{, since each component is unknotted by Lemma}~\ref{lem:l6a2unknottedlinking}. Neither Hopf link has the correct Khovanov homology, so the span of {each} Alexander gradings must be strictly larger {than $[\frac{-1}{2},\frac{1}{2}]$}.

     If {$\widehat{\HFL}(L;\Q)$ contains} an odd number of {$W_a[b,c]$} summands{,} the symmetry of link Floer homology implies that $\widehat{\HFL}(L;\Q)$ contains a {$W_b[0,0]$} summand{, for some $b$}. It follows that either:\begin{enumerate}
            \item $\widehat{\HFL}(L;\Q)$ has {$W_a[m,n]^{\oplus 2}\oplus W_{a-2m-2n}[-m,-n]^{\oplus 2}$} summand, where {$m,n\in\frac{1}{2}\Z, m,n\geq \frac{1}{2}$} and {$a=b+{n+m}$ if there is a {$W_b[0,0]$} summand},

            \item $\widehat{\HFL}(L;\Q)$ has a {$W_a[m,n]\oplus W_{a-2m}[-m,n]\oplus W_{-a-2m-2n}[-m,-n]\oplus  W_{-a-2n}[m,-n]$} summand where {$m,n\in\frac{1}{2}\Z, m,n\geq \frac{1}{2}$ and $a=b+{n+m}$ if there is a {$W_b[0,0]$} summand},
            \item or in a maximal non-trivial $A_i$ grading, $\widehat{\HFL}(L)$ is of rank two.
        \end{enumerate}

    Suppose {that} we are in one of the first two cases. $L$ is non-split because both components are unknotted and the Khovanov homology of the two{-}component unlink {i}s of rank four. Thus{,} we can apply~\cite[Theorem 5.1]{binns2024floer}. Since $L_1$ is unknotted, we deduce that $L_2$ is a clasp-closure with respect to $L_1$. However, Lemma~\ref{lem:maxgradingmasloov} then implies that the maximal non-trivial $A_1$ grading is given {---} up to affine isomorphism {---} by ${\Q[-1]\oplus\Q^2[0]\oplus\Q[1]}$. This is a direct contradiction in case $1$. In case two we would then have that $\widehat{\HFL}(L;\Q)$ contains a {$W_a[n,\frac{1}{2}]\oplus W_{a-1}[n,-\frac{1}{2}]\oplus W_{a-1-2n}[-n,-\frac{1}{2}]\oplus  W_{a-2n}[-n,\frac{1}{2}]$} summand, a contradiction since $L$ has odd linking number so that $\widehat{\HFL}(L;\Q)$ must be supported in $\Z+\frac{1}{2}$ valued Alexander gradings. 

    Thus we have that $\widehat{\HFL}(L;\Q)$ is of rank two in one of the maximal non-trivial Alexander gradings. By~\cite[Proposition 1]{martin2022khovanov} we have that one component --- say $L_1$ --- is a braid axis for the other --- say $L_2$. Since the linking number of the two links is $-3$, it follows that $L_2$ is the braid-closure of a $3$-braid. Since $L_2$ represents the unknot, the desired result follows from Murasugi's classification of $3$-braids with unknotted braid-closures up to conjugacy~\cite{Murasugiclosed3} and the fact that $T(2,\pm6)$ has distinct Khovanov homology from $L6a2$.
    \end{proof}

    \begin{remark}
        Using the same argument as given in Remark~\ref{rmk:l6a2orientations} it can be shown that Khovanov homology detects $L6a2$ regardless of the orientation.
    \end{remark}

We now proceed to our next detection result, for $L9n15$. For the reader's convenience we recall from~\cite{knotatlas}, say, that the Khovanov homology of $L9n15$ is given as follows:

\begin{align}
    \begin{tabular}{|c|c|c|c|c|c|c|c|}
    \hline
    \backslashbox{\!$q$\!}{\!$h$\!}
     & $-6$& $-5$&$-4$&$-3$ &$-2$&$-1$& $0$  \\\hline
$-6$& & & &  && & $\Z$ \\\hline
$-8$& & &  &&$\Z/2$& & $\Z$\\\hline
$-10$& & &&&$\Z$ & &  \\\hline
$-12$&&& $\Z$&   & & & \\\hline
$-14$&&& $\Z$& $\Z$  & & & \\\hline
$-16$&$\Z$& $\Z$&  & & & & \\\hline
$-18$&$\Z$& $\Z$ &  & & & & \\\hline
  \end{tabular}\label{tab:Khl9n15}
\end{align}

\begin{proof}[Proof of Theorem~\ref{thm:KhL9n15}]
    Suppose $L$ is a link with $\Kh(L;\Z)\cong\Kh(L9n15;\Z)$. We first determine the components of $L$. Note that $L$ has an even number of components since $\Kh(L;\Z)$ is supported in even quantum gradings. Observe that $\Kh(L;\Z/2)\cong\Kh(L9n15;\Z/2)$ by an application of the universal coefficient theorem. In particular, $\rank(\Kh(L;\Z/2))=12$. Consider the Batson-Seed link splitting spectral sequence~\cite{batson2015link}. Since every {knot} has Khovanov homology with $\Z/2$ coefficients of rank $2+4m$ for some $m$, we have that $L$ has at most two components. Indeed {--- appealing to}~\cite[Corollary 3.2.C]{shumakovitch2014torsion} --- one of these components, {without loss of generality} $L_1$, has ${2\rank(\Khr(L_1;\Q)\leq\rank(\Kh(L_1;\Z/2))=2}$ so that $L_1$ is unknotted by~\cite{kronheimer2011khovanov}. The remaining component of $L$, $L_2$, has  ${\rank(\Kh(L_1;\Z/2))\leq 6}$ and so{,} in turn{,} ${\rank(\Khr(L_1;\Z/2))\leq 3}$ by~\cite[Corollary 3.2.C]{shumakovitch2014torsion}. It follows from~\cite{baldwin2021khovanov} and~\cite{kronheimer2011khovanov} that $L_2$ is an unknot or a trefoil.

       An application of the universal coefficient theorem shows that $\Kh(L;\Q)\cong\Kh(L9n15;\Q)$. Consider the spectral sequence from Khovanov homology to Lee homology. Since this spectral sequence respects the homological grading and the Lee homology of a two{-}component link consists of two $\Q\oplus\Q$ summands supported in homological gradings $0$ and $2\lk(L)$, we can see by inspection that $\lk(L)=-3$. We can then apply {E}quation~(\ref{eq:BSforlk=-3}) again{; or rather to the same equation but where we take $\Z/2$-coefficients rather than rational coefficients}. Let $U$ denote the unknot. Note that $\Kh(U;\Z/2)\otimes W$ has support in {$l:=h-q$} grading $2$ so that $L_1$ cannot be the unknot. Likewise $\Kh(T(2,3);\Z/2)\otimes W$ has support in {$l:=h-q$} grading $-7$ so that in fact $L_2$ is $T(2,-3)$.

       Now, by an application of by~\cite[Corollary 3.2.C]{shumakovitch2014torsion} {and the universal coeficeint theorem,} we have that $\rank(\Khr(L;\Q))\leq 6$ so that $\rank(\widehat{\HFK}(L;\Q))\leq 12$ by the rank bound coming from Dowlin's spectral sequence~\cite{dowlin2018spectral}. Recall that there is a spectral sequence from $\widehat{\HFL}(L;\Q)$ to ${\widehat{\HFL}(L_2;\Q)\otimes V[-\frac{3}{2}]}$ Since $L$ has linking number $-3$ and the second component is a copy of $T(2,-3)$ it follows that in $A_2$ grading {$-\frac{5}{2}$} there is a $\Q_0\oplus\Q_{-1}$-summand, in $A_2$ grading {$-\frac{3}{2}$} there is a $\Q_1\oplus\Q_{0}$-summand and that in $A_2$ grading $-\frac{1}{2}$ there is a $\Q_1\oplus\Q_{2}$-summand. This completely determines the $A_2$-graded version of {the} link Floer homology of $L$ by symmetry properties and the fact that the rank is at most twelve. Now, since $L_1$ is unknotted and the linking number of $L$ is $-3$, $\widehat{\HFL}(L;\Q)$ must have support in Alexander gradings $\pm\frac{3}{2}$. Since homogeneous summands with $A_2$-grading at least $0$ must all die under the spectral sequence to $\widehat{\HFL}(T(2,-3);\Q)\otimes V[-\frac{3}{2}]$, we have that the pairs of generators in each $A_2$ grading must be of distinct $A_1$ gradings.

       Now, the span of $\delta$-graded $\Kh(L;\Z/2)$ is $4$, so the span of $\delta$-graded $\Khr(L;\Z/2)$ is $2$. Thus{,} the span of $\delta$-graded pointed Khovanov homology, $\widetilde{\Kh}(L,\mathbf{p};\Z/2)$ where $\mathbf{p}$ consists of a point on each component of $L$, is at most two~\cite[Lemma 2.11]{baldwin2017khovanovpointed} and so finally the span of {$\delta$-graded} $\widehat{\HFL}(L;\Q)$ is at most two. It follows that at most three of the homogeneous $\Q$ summands with $A_2$-grading at most $-\frac{1}{2}$ occur in extremal $A_1$ gradings. It follows in turn that $\widehat{\HFL}(L);\Q$ is of rank two in the maximal non-trivial $A_1$ grading, so that $U$ is a braid axis for $T(2,-3)$. Since the linking number is $-3$, the corresponding braid is a $3$-braid. Now, by Birman-Menasco's classification of $3$-braids with braid-closures representing the unknot, the only two such augmented braid-closures are $L9n15$ and $L9n16$. These are distinguished by their Khovanov homology, see~\cite{knotatlas}. The result holds for oriented links by Remark~\ref{rem:L9n15orientations}.\end{proof}

\section{Annular Khovanov homology}\label{sec:AKH}
In this section we study annular Khovanov homology. In Section~\ref{subsec:review} we review structural properties of the invariant we will use in the rest of the section. In Section~\ref{subsec:rankbound} we prove rank bounds for the annular Khovanov homology of clasp-closures and braid-closures and give some applications of the rank bounds to the study of braid-closures. In Section~\ref{sec:applications} we give two braid-closure detection results. In Section~\ref{sec:akhclasps} we apply a rank bound from Section~\ref{subsec:rankbound} to prove that annular Khovanov homology detects the Mazur pattern.

\subsection{A review}\label{subsec:review} 

We begin with a brief review of annular Khovanov homology{, which is due to Asaeda-Przytycki-Sikora}~\cite{asaeda_categorification_2004}. We will work with coefficients in $R$, where $R$ is either $\Z$, $\C$, $\Q$ or $\Z/2$. Annular Khovanov homology is an $R$-module valued invariant of links in the thickened annulus. The underlying chain complex for the annular Khovanov homology of an annular link $L$ is freely generated by complete resolutions of a fixed diagram for $L$ where each circle is decorated with a $1$ or an $X$. The resulting homology groups carry three gradings. The first of these gradings is called the \emph{homological grading} which we shall denote by $i$, the second is the \emph{quantum grading} which we shall denote by $j${. These two gradings are defined just as in the Khovanov homology case.} {T}he third grading is called the \emph{annular grading}{,} which we shall denote by $k$. {This is defined as the difference between the number of resolutions encircling the annular axis marked with a $1$ and those marked with an $X$. The differential on annular Khovanov homology is then simply defined as the components of the differential on the Khovanov complex of the underlying link that preserve the annular grading.}

We will use an two exact triangles for Annular Khovanov homology. Recall --- say from~\cite[Lemma 8.2]{binns2020knot} --- that annular Khovanov admits the following skein exact triangle corresponding to resolving a negative crossing:

\begin{equation}\label{eq:akhskeinneg}
\begin{tikzcd}
    \AKh(L)\arrow[rr]&&\AKh(L_0)[n_-^0-n_-]\{3n_-^0-3n_-+1\}\arrow[dl]\\&\AKh(L_1)\{-1\}\arrow[ul]
\end{tikzcd}
\end{equation}

Here $n_-$ is the number of negative crossings in the diagram for $L$, $n_-^0$ is the number of negative crossings in the diagram for $L_0$, $\{a\}$ is a shift in the quantum grading by $a$ and $[b]$ is a shift in the homological grading by $b$. Corresponding to resolving a positive crossing we have the following exact triangle:

\begin{equation}\label{eq:akhskeinpos}
\begin{tikzcd}
\AKh(L)\arrow[rr]&&\AKh(L_0)\{1\}\arrow[dl]\\&\AKh(L_1)[n_-^1-n_-+1]\{3n_-^1-3n_-+2\}\arrow[ul]
\end{tikzcd}\end{equation}

Grigsby-Licata-Wehrli showed that for annular Khovanov homology with complex coefficients carries the structure of an $\slrep(\C)$ representation~\cite{grigsby_annular_2018}. This entails that{, up to overall grading shift in the quantum grading,} $\AKh(L;\C)$ decomposes as a direct sum of vector spaces $V_n^{i}$, where $V_n^{i}$ is the rank $n+1$ vector space supported in homological grading $i$ and quantum and annular gradings $(-n+2p,-n+2p)$ for all $0\leq p\leq n$. {It is not hard to see that annular Khovanov homology of an annular link $L$ is supported in annular gradings $j$ with $|j|\leq w(L)$, where $w(L)$ is the \emph{wrapping number} of $L$; i.e. the minimum \emph{geometric} intersection number of $L$ with a meridional disk for the thickened annulus. Consequently, $\AKh(L;\C)$ can only contain $V_n$ summands with $n\leq w(L)$.}

Annular Khovanov homology admits a spectral sequence to the Khovanov homology of the underlying link. The differential on annular Khovanov homology inducing this spectral sequence increases the homological grading by one, preserves the quantum grading and decreases the annular grading. Moreover, the differential forms part of an action of the $\slrep(\wedge)$ current algebra on $\AKh(L;\C)$ --- a stronger structural property than being an $\slrep(\C)$ representation. See~\cite[Section 6]{grigsby_annular_2018} for details.

 {It is not hard to see that ${\AKh(b(\beta),k=-n;R)}$ consists of a single copy of $R$. This summand is generated by Plamenevskaya's \emph{transverse invariant}}~\cite{Plamenevskaya06transverse}{. This class consists of $n$ concentric circles about the braid axis, each decorated with an $X$. The quantum grading of this generator is the self linking number of $\beta$ --- which we denote $\slflk(\beta)$ --- and the homological grading is zero.}
\subsection{From orderability to rank bounds}\label{subsec:rankbound}
In this section we prove Theorem~\ref{thm:rankboundforarbbraid}, and various related results. The most concise version of our result is that stated in the introduction:

\begin{theorem}\label{thm:rankboundforarbbraid}
If $\beta$ be an $n$-braid with $n\geq 2$ then:\begin{enumerate}
    \item $\rank(\AKh(b(\beta);\C))\geq 2n$.
\item  $\rank(\AKh(c(\beta);\C))\geq 4n$.

\end{enumerate}
\end{theorem}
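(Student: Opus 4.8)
The plan is to mimic the Baldwin--Vela-Vick argument for knot Floer homology, exploiting the left orderability of the braid group. The key structural input is that $\AKh(b(\beta);\C)$ carries an $\slrep(\C)$-representation structure by Grigsby--Licata--Wehrli, so it decomposes as a direct sum of irreducibles $V_n^i$, and — as noted in the review — only $V_m$ with $m \leq w(b(\beta)) \leq n$ can appear. The bottom annular grading $k=-n$ is one-dimensional, generated by Plamenevskaya's transverse class (the $n$ concentric $X$-decorated circles), sitting in homological grading $0$ and quantum grading $\slflk(\beta)$. Therefore the unique $V_n$ summand that could support this generator must in fact be present, and it already contributes $n+1$ to the rank, sitting in homological grading $0$. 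To get to $2n$ I would need to produce roughly another $n$ classes, and the natural place is a second $V_{n}$-like or $V_{n-2}$-like summand detected by a non-triviality statement coming from orderability. The cleaner route is probably to argue directly at the level of the extreme quantum/annular grading: produce a nontrivial class in annular grading $k=n$ (the ``all $1$'' resolution), and then use symmetry of the $\slrep$-action together with the fact that any $V_m$ through an extreme weight vector contributes $m+1 \geq 1$ more dimensions — but the honest mechanism that forces the total to be $\geq 2n$ is a representation-theoretic count of weights.

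Concretely, the first step is to set up the analogue of Lemma~\ref{lem:nontivialrep} (referenced in the introduction): show that a certain map — induced by the exact triangles~\eqref{eq:akhskeinneg} and~\eqref{eq:akhskeinpos} applied crossing-by-crossing, or equivalently the ``transverse invariant'' naturality maps under stabilization/destabilization — is nonzero. This is where left orderability of $B_n$ enters: one considers the Dehornoy (or any left-invariant) order, and uses it to certify that composing the relevant cobordism maps around the braid cannot kill the class generated by the top/bottom resolution. The point is that in a left-ordered group the identity is not a product of conjugates of positive generators in a way that would trivialize the invariant, which translates into the cobordism map from $\AKh$ of the trivial closure to $\AKh(b(\beta))$ being injective on the relevant extremal summand. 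Granting this, each of the $n$ strands contributes a weight-pair to the $\slrep$-decomposition, forcing at least $n$ summands $V_{m_1},\dots,V_{m_r}$ with $\sum (m_i+1) \geq 2n$; in the braid-closure case the bound $2n$ then follows by adding up dimensions of irreducibles, and in the clasp-closure case $c(\beta)$ has wrapping number $n$ as well but the clasp replaces two parallel strands by a genuine clasp, which doubles the count of extremal generators (one sees this by resolving the clasp via the skein triangle into a braid-closure piece plus a piece with an extra circle), yielding $4n$.

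The steps in order: (1) record the $\slrep(\C)$-decomposition and the wrapping-number constraint from Section~\ref{subsec:review}; (2) identify the transverse-invariant generator in $k=-n$ and, using~\eqref{eq:akhskeinneg}/\eqref{eq:akhskeinpos}, the corresponding generator in $k=n$; (3) prove the non-triviality/injectivity statement (Lemma~\ref{lem:nontivialrep}) via left orderability of $B_n$ — this is the technical heart; (4) deduce from (2)+(3) a lower bound on the number of $\slrep$-summands and hence on $\rank_\C \AKh(b(\beta);\C)$, giving part (1); (5) for part (2), apply the skein exact triangle at a crossing of the clasp of $c(\beta)$ to relate $\AKh(c(\beta);\C)$ to $\AKh(b(\beta);\C)$ and a shifted copy of $\AKh$ of a braid-closure with one extra unknotted annular component, then combine the rank bound from part (1) on each piece with an exactness argument to conclude $\rank \geq 4n$.

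I expect step (3) to be the main obstacle: the analogue of the Baldwin--Vela-Vick lemma has to be re-derived in the annular Khovanov setting, where the relevant ``no unexpected cancellation'' statement must be extracted purely from the combinatorics of the cube of resolutions and the left-invariant order on $B_n$, rather than from the holomorphic-disk counts available in Heegaard Floer theory. In particular one must check that the cobordism maps appearing in the iterated exact triangles are compatible with the annular grading in the way needed, and that left orderability genuinely obstructs the composite from vanishing — this likely requires choosing the braid word carefully (e.g.\ writing $\beta$ in band-generator or Dehornoy-normal form) so that the order-positivity is visible at the chain level.
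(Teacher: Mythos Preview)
Your overall strategy --- left orderability plus the $\slrep(\C)$-structure --- is exactly the paper's, and you correctly identify Lemma~\ref{lem:nontivialrep} as the technical heart. But two concrete points are off.

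First, your description of how orderability enters is too elaborate and misses the actual mechanism. The paper does not use cobordism maps, iterated skein triangles, or Dehornoy normal form. It uses the most naive consequence of orderability: a non-identity braid is conjugate to a $\sigma$-negative (or $\sigma$-positive) word, meaning some $\sigma_i^{-1}$ appears but $\sigma_i$ does not. One then works \emph{directly at the chain level} on the standard diagram: pick that crossing, and define $f_b$ by changing its resolution (Lemma~\ref{lem:chainlevel}). The $\sigma$-negative hypothesis guarantees that every edge map in the cube away from that crossing merges $X$-decorated circles, hence vanishes, so $f_b$ is a chain map and $\partial_{-2}^*$ is its left inverse. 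The image $f_b^*(\text{Plamenevskaya class})$ lands in homological grading $-1$, hence in a different irreducible than the $V_n^0$ containing the transverse class; since it sits in annular grading $2-n$, that irreducible is a $V_{n-2}^{-1}$. Adding $(n+1)+(n-1)=2n$ finishes part (1). Your ``each of the $n$ strands contributes a weight-pair'' count is not the argument.

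Second, your step (5) for clasp-closures has a genuine gap. A skein exact triangle at a crossing of the clasp gives an inequality of the form $\rank(A)\geq |\rank(B)-\rank(C)|$, not $\rank(A)\geq\rank(B)+\rank(C)$; you cannot add the two $2n$ bounds this way. The paper instead runs the \emph{same} chain-level argument directly on $c(\beta)$: the point is that $\CAKh(c(\beta),k=-n)$ already has rank two in homology (two $V_n$ summands, in distinct homological gradings --- see Figure~\ref{fig:CAKh-n}), and the map $f_c$ injects both into $k=2-n$, producing two further $V_{n-2}$ summands. That gives $2(n+1)+2(n-1)=4n$ with no exactness juggling.
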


Of course, if $n=1$, then  $c(\beta)$ is undefined, while $\rank(\AKh(b(\beta);\C))=2$. Theorem~\ref{thm:rankboundforarbbraid} is a direct consequence of the next --- stronger --- result. To state it recall that the braid group is left orderable. There are many different interpretations of the ordering on the braid group {(see}~\cite{dehornoy2008ordering}), two of which we will use in this section. The first is the following; we write $\beta<1$ if there is a word for $\beta$ in the letters given by the standard Artin generators and their inverses which is \emph{$\sigma$-negative}; i.e. if among the letters that occur in that word, the letter of the lowest index occurs only with negative powers. {See}~\cite[Chapter II, Section 1.2]{dehornoy2008ordering} for further details.

\begin{lemma}\label{lem:nontivialrep}

Suppose $\beta$ is a $\sigma$-negative $n$-braid. Then $\AKh(c(\beta);\C)$ contains a 
\begin{align*}V_{n}^{a_1}\{a_2\}\oplus V_{n}^{b_1}\{b_2\}\oplus V_{n-2}^{a_1-1}\{a_2-2\}\oplus V_{n-2}^{b_1-1}\{b_2-2\}\end{align*}

\noindent summand. Moreover{,} the $\slrep(\wedge)$ action sends the lowest annular grading generator in ${V_{n-2}^{a_1-1}\{a_2-2\}}$ to the lowest annular grading generator in $V_{n}^{a_1}\{a_2\}$ and the lowest annular grading generator in $V_{n-2}^{b_1-1}\{b_2-2\}$ to the lowest annular grading generator in $V_{n}^{b_1}\{b_2\}$.

Similarly, $\AKh(b(\beta);\C)$ contains a $V_{n}^0\{n+\slflk(\beta)\}\oplus V_{n-2}^{-1}\{n-2+\slflk(\beta)\}$ summand. Moreover{,} the $\slrep(\wedge)$ action sends the lowest annular grading generator in $V_{n-2}^{-1}\{n+\slflk(\beta)-2\}$ to the lowest annular grading generator in $V_{n}^0\{n+\slflk(\beta)\}$.
\end{lemma}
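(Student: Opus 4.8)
The plan is to induct on $n$, using the negative skein exact triangle~\eqref{eq:akhskeinneg} to peel off the lowest-index generator in a $\sigma$-negative word for $\beta$. Since $\beta$ is $\sigma$-negative, there is a word in which the generator $\sigma_i$ of lowest occurring index appears only with negative exponents; pick one such negative occurrence of $\sigma_i^{-1}$ as the distinguished crossing. Resolving it produces two new braids: the $0$-resolution $\beta_0$ (the oriented resolution, which merges two strands and hence is, after an isotopy, conjugate to an $(n-1)$-strand braid — or rather produces a split annular link whose braided part has one fewer strand) and the $1$-resolution $\beta_1$, which again has a $\sigma$-negative word (removing or flipping one $\sigma_i^{-1}$ from a $\sigma$-negative word leaves it $\sigma$-negative, since the lowest-index letter is unaffected in sign). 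The key is to track how the $V_n$-summand — generated geometrically by the concentric circles decorated so as to realize the top annular gradings — and its $V_{n-2}$ partner under the $\mathfrak{sl}_2(\wedge)$ action behave across the triangle.

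The concrete strategy for the clasp-closure case: the clasp forces the appearance of a $V_n$ summand in two distinct homological gradings $a_1, b_1$ (this is the mechanism already used in~\cite[Theorem 8.1]{binns2024floer} and is visible from the two natural ``states'' near the clasp region), and the $\mathfrak{sl}_2(\wedge)$-lowering operator (the differential inducing the spectral sequence to Khovanov homology of the underlying link, which decreases the annular grading by one) applied to the bottom generator of each $V_n$ must land somewhere in annular grading $-(n-1)$, forcing a $V_{n-2}$ summand shifted appropriately in homological and quantum grading. I would first establish the base case $n=2$ directly: there $c(\beta)$ with $\beta$ a $\sigma$-negative $2$-braid is a twisted Whitehead-type annular link whose annular Khovanov homology can be computed or bounded explicitly, yielding the claimed $V_2^{a_1}\oplus V_2^{b_1}\oplus V_0^{a_1-1}\oplus V_0^{b_1-1}$ pattern. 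For $n=1$ (needed to seed the clasp induction, since resolving drops $n$ by one and $V_{n-2}=V_{-1}$ should be interpreted as zero), the statement degenerates appropriately. Then for the inductive step, I would argue that the $V_n$ and $V_{n-2}$ summands survive the exact triangle: they live in annular gradings that the other term in the triangle cannot reach (because the resolved braid has wrapping number $\leq n-1$, hence no $V_n$ or even $V_{n-1}$ summands by the wrapping-number bound recalled in Section~\ref{subsec:review}), so no differential into or out of them is possible.

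For the braid-closure statement the argument is parallel but cleaner: $\AKh(b(\beta), k=-n;\C)$ is a single copy of $\C$ generated by Plamenevskaya's transverse invariant in homological grading $0$ and quantum grading $\slflk(\beta)$, as recalled in Section~\ref{subsec:review}. The top annular grading $k = n$ is governed by the dual situation (all circles marked $1$), giving the bottom of a $V_n^0\{n+\slflk(\beta)\}$ summand; that this is genuinely a $V_n$ rather than something that partially cancels follows because $\AKh(b(\beta);\C)$ is an $\mathfrak{sl}_2(\C)$-representation, so the generator in annular grading $-n$ propagates to a full $V_n$ string. Then the $\mathfrak{sl}_2(\wedge)$ current-algebra action (the extra structure beyond the $\mathfrak{sl}_2(\C)$-module structure) applied to the lowest generator of this $V_n^0$ produces a nonzero class in annular grading $-(n-1)$ and homological grading $-1$; $\sigma$-negativity of $\beta$ is exactly what guarantees this image is nonzero (this is the orderability input, mirroring Baldwin–Vela-Vick's use of the $\sigma$-negativity to force non-vanishing of an analogous differential in~\cite{baldwin_note_2018}), and it must sit at the bottom of a $V_{n-2}^{-1}\{n-2+\slflk(\beta)\}$ summand by the same $\mathfrak{sl}_2(\C)$-representation-theoretic propagation.

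\textbf{Main obstacle.} The hard part will be proving that the relevant differential (the $\mathfrak{sl}_2(\wedge)$-lowering map out of the bottom of the $V_n$ summand) is \emph{nonzero} — i.e. genuinely extracting the content of left-orderability. For the braid-closure case this should follow by adapting Baldwin–Vela-Vick's argument: a $\sigma$-negative word gives an explicit cycle representative and one checks the differential does not kill it, or equivalently that the corresponding class in the next page of the spectral sequence to $\Kh$ of the underlying link is nonzero because that underlying link is not the unknot/unlink in the relevant way. For the clasp-closure case the additional subtlety is bookkeeping the two homological gradings $a_1, b_1$ and verifying that both $V_n$ summands independently acquire their $V_{n-2}$ partners with the asserted grading shifts $\{a_2-2\}$, $\{b_2-2\}$; I expect this to require carefully setting up the induction so that the exact triangle respects the splitting into the ``$a$-part'' and ``$b$-part,'' which in turn relies on these parts being separated either in homological or in quantum grading. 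I would handle this by making the inductive hypothesis slightly stronger — tracking not just the isomorphism type but the location of these summands relative to the transverse-type generators — so that the skein triangle argument goes through uniformly.
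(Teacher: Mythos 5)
There is a genuine gap, and it lies in the two places your induction leans hardest. First, the skein-triangle bookkeeping is wrong: resolving a chosen crossing $\sigma_i^{-1}$ of an $n$-braid produces, on one side, the closure of the braid with that letter deleted --- still an $n$-strand braid, so your induction parameter does not drop --- and, on the other side, an annular link with a turnback whose wrapping number is at most $n-2$, not $n-1$. The $V_n$ summands are indeed protected by annular grading, but the $V_{n-2}$ summands you need to transport live in annular gradings $\pm(n-2)$, exactly the range the turnback term can reach, so ``no differential into or out of them is possible'' fails precisely for the part of the statement that carries the content. Moreover, the claim that deleting one occurrence of $\sigma_i^{-1}$ leaves a $\sigma$-negative word is false in general (delete the only $\sigma_1^{-1}$ from $\sigma_1^{-1}\sigma_2$), so the inductive hypothesis need not apply to the resolved braid. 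Second, you explicitly defer the essential point --- non-vanishing of the lowering map $\partial_{-2}^*$ on the bottom of the $V_n$ summand, i.e.\ the actual use of left-orderability --- to ``adapting Baldwin--Vela-Vick,'' without supplying the mechanism. As it stands the proposal proves the easy representation-theoretic propagation but not the existence of the $V_{n-2}$ partners.

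The paper's route avoids both problems by working entirely at the chain level (Lemma~\ref{lem:chainlevel}): choosing a $\sigma$-negative word and a crossing labelled $\sigma_i^{-1}$ where $\sigma_i$ is the lowest-index letter, one defines a map $f$ that takes a generator of $\CAKh$ in annular grading $-n$ to the generator obtained by switching the resolution at that crossing and decorating the new circle with an $X$. $\sigma$-negativity guarantees that all other crossing-changes merge $X$-decorated circles, so $f$ is a chain map into the associated graded in annular grading $2-n$, and that $\partial_{-2}^*$ is a left inverse to $f$ on the $E_1$ page. The injection $f^*$ lands in homological grading one lower than its source, so its image lies in a different irreducible $\slrep(\C)$-summand, and it must be a bottom generator of that summand since otherwise the annular grading $-n$ part of $\AKh$ would have rank greater than the known value; this produces the $V_{n-2}$ summands together with the asserted $\slrep(\wedge)$ statement, because $\partial_{-2}^*$ is part of that action. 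If you want to salvage your outline, the explicit section $f$ of $\partial_{-2}^*$ is the missing idea you would have to add; the skein-triangle induction by itself will not produce it.
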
 

\begin{remark}As we shall see, one can write down the values of the quantum and homological gradings of $\AKh(c(\beta),k=-n;R)$ in terms of diagrammatic data for $\beta$. {Comparing this to the braid closure case --- where there is a single generator whose homological grading is zero and quantum grading is the self linking number --- } it is natural to ask what topological{ or contact geometry-theoretic} information these numbers contain. We do not pursue this question here.
\end{remark}
Our strategy for the proof of Lemma~\ref{lem:nontivialrep} is to use properties of $\sigma$-negative words to control the annular Khovanov homology of closures of braids in the next to minimal annular grading.

Given an annular link $L$ view $\CKh(L;R)$ as a chain complex filtered with respect to the annular filtration. The differential comes in two pieces, $\partial_0+\partial_{-2}$, where $\partial_0$ preserves the annular grading on {$\CKh(L;R)$} and $\partial_{-2}$ decreases it b{y} $2$. $\AKh(L;R)$ can be viewed as $(\CKh(L;R),\partial_0)$ i.e. the first page of the corresponding spectral sequence.

\begin{lemma}\label{lem:chainlevel}
Let $\beta$ be a $\sigma$-negative braid which is not of index $1$. Then there are chain maps \begin{align*} 
f_c :\CKh(c(\beta);i,j,k\leq -n;R)\to\CAKh(c(\beta),i-1,j,2-n;R)\end{align*}
and 
\begin{align*}{f_b:\CKh(b(\beta);i,j,k\leq -n;R)\to\CAKh(b(\beta);i-1,j,2-n;R)}.\end{align*}

Moreover, $\partial_{-2}^*$ is a left inverse to $f_c$ or $f_b$ on the $E_1$ page of the spectral sequence from $\CAKh(c(\beta);R)$ or  $\CAKh(b(\beta);R)$ to $\Kh(b(\beta);R)$. 
\end{lemma}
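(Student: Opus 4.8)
\textbf{Proof plan for Lemma~\ref{lem:chainlevel}.}

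The plan is to build the chain map $f_c$ (and $f_b$ simultaneously, the argument being identical) explicitly on generators of $\CKh(c(\beta);R)$ in the bottom two annular gradings, and then to verify both that it commutes with the annular-grading-preserving differential $\partial_0$ and that it is split by the connecting differential $\partial_{-2}^*$ on the $E_1$-page. First I would set up notation: fix the diagram for $c(\beta)$ obtained by closing up $\beta$ with a clasp, and recall from the review section that $\CAKh(c(\beta),\cdot,\cdot,k=-n;R)$ and $\CAKh(c(\beta),\cdot,\cdot,k=2-n;R)$ are the two lowest nonzero annular gradings (here $-n$ is forced by the wrapping number). The key point, which I would extract from the $\sigma$-negativity hypothesis, is that in a $\sigma$-negative word the lowest-index letter appears only with negative exponents; after a standard reduction (collecting those generator-$\sigma_i$ crossings) this forces the Seifert/oriented resolution to contribute in a controlled way and lets one identify the chain groups in annular gradings $-n$ and $2-n$ with an explicit, small model. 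Concretely, I expect the bottom annular grading to be generated by the Plamenevskaya-type state ($n$ nested circles labelled $X$) together with the states coming from resolving the clasp, and the next annular grading up to be generated by states obtained by changing exactly one $X$ to a $1$ (equivalently by splitting off or merging one of the innermost circles).

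Next I would \emph{define} $f_c$ on a generator $x$ in annular grading $k\le -n$ — actually only $k=-n$ contributes, since $-n$ is minimal — by the rule: take $x$, apply the component of the Khovanov differential that merges (or splits) the unique circle adjacent to the braid axis so as to raise the annular grading by exactly $2$, i.e. the $\partial_{-2}$ piece, and then \emph{record} the homological-degree-shifted image in $\CAKh(c(\beta),i-1,j,2-n;R)$. In other words $f_c$ is essentially ``the $\partial_{-2}$ component restricted to the bottom annular layer, reinterpreted as landing in the $2-n$ layer, shifted by $[-1]$ in homological grading and $\{0\}$ in the quantum grading.'' That $f_c$ is a chain map with respect to $\partial_0$ amounts to the identity $\partial_0\partial_{-2}=-\partial_{-2}\partial_0$ (from $\partial^2=0$ on $\CKh$, once one separates by annular grading: $(\partial_0+\partial_{-2})^2=0$ gives $\partial_0^2=0$, $\partial_0\partial_{-2}+\partial_{-2}\partial_0=0$, $\partial_{-2}^2=0$), combined with the fact that on the bottom layer $\partial_{-2}$ out of grading $-n$ has nothing below it to worry about, so $\partial_{-2}$ descends to a well-defined map on $\partial_0$-homology. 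The splitting statement is then almost a tautology: on the $E_1$-page, $\partial_{-2}^*$ is induced by $\partial_{-2}$, and by construction $\partial_{-2}^* \circ f_c$ is induced by $\partial_{-2}$ applied to ``$\partial_{-2}$ of $x$ reinterpreted,'' which I will arrange to be $\pm\mathrm{id}$ by choosing the reinterpretation isomorphism between the two layers to be exactly the inverse of the relevant piece of $\partial_{-2}$. This is where the $\sigma$-negativity is genuinely used: it guarantees that the relevant component of $\partial_{-2}$ on the bottom annular layer is an \emph{isomorphism} onto a direct summand of the $k=2-n$ layer (the merge of the two innermost $X$-circles into a single $X$-circle is injective with split image), rather than something with kernel or with cokernel obstruction.

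The main obstacle I anticipate is pinning down the chain groups in annular grading $2-n$ precisely enough to know that the image of $f_c$ is a \emph{direct summand} on which $\partial_{-2}^*$ restricts to an isomorphism — i.e., ruling out that other $\partial_0$-differentials on the $E_1$-page could cancel the classes $f_c$ produces, and checking the clasp crossings do not spoil the ``one circle adjacent to the axis'' picture. I would handle this by using the skein exact triangles~\eqref{eq:akhskeinneg} and~\eqref{eq:akhskeinpos} to resolve the crossings coming from the lowest-index generator (all negative, by $\sigma$-negativity) one at a time, reducing to a braid of strictly smaller index or to an explicit base case, and tracking the two bottom annular layers through each triangle; the clasp contributes a fixed local tangle whose two bottom annular layers are computed once by hand. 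A secondary subtlety is sign/grading bookkeeping — confirming the homological shift is exactly $i-1$ and the quantum grading is unchanged — which follows from the grading conventions in~\eqref{eq:akhskeinneg} and the fact that $\partial_{-2}$, like the full Khovanov differential, raises $i$ by $1$ and preserves $j$. With those in hand, the existence of $f_c$, $f_b$ and the one-sided inverse property of $\partial_{-2}^*$ follow.
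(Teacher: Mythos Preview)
Your definition of $f_c$ has the direction wrong, and this is not a cosmetic issue. The component $\partial_{-2}$ of the Khovanov differential \emph{decreases} the annular grading by $2$ and \emph{increases} the homological grading by $1$; applied to a generator in $(i,j,-n)$ it lands in $(i+1,j,-n-2)=0$, not in $(i-1,j,2-n)$. If what you actually intend is a \emph{section} of $\partial_{-2}\colon k=2-n\to k=-n$, then the gradings are correct, but the identity $\partial_0\partial_{-2}+\partial_{-2}\partial_0=0$ says nothing about whether a chosen section commutes with $\partial_0$; that is the entire content of the lemma, and you have not addressed it. Your left-inverse argument then collapses as well: literally read, $\partial_{-2}^*\circ f_c$ becomes $\partial_{-2}^2=0$, not the identity. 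Finally, the ``merge of the two innermost $X$-circles into a single $X$-circle'' that you claim is injective is the Frobenius multiplication $m(X\otimes X)=0$.

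The paper's construction is direct and uses no skein triangles or induction on index. Fix a $\sigma$-negative word for $\beta$ and single out one crossing $y$ corresponding to a letter $\sigma_i^{-1}$. Every generator $\mathbf{x}$ in annular grading $-n$ sits at a cube vertex where $y$ is $1$-resolved; define $f(\mathbf{x})$ to be the generator at the adjacent vertex with $y$ $0$-resolved, all other labels kept and the one new circle decorated with $X$. Moving \emph{backwards} along a cube edge drops $i$ by $1$, preserves $j$, and replaces two adjacent essential $X$-circles by one \emph{inessential} $X$-circle, so $k$ rises from $-n$ to $2-n$. The $\sigma$-negativity enters exactly once: because $\sigma_i$ never occurs, every $\partial_0$-edge out of $f(\mathbf{x})$ at a crossing of $\beta$ is a merge of $X$-labelled circles and hence zero, so only the clasp crossings contribute to $\partial_0$ on either side and $f$ visibly intertwines those. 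The left-inverse statement follows because the only annular-grading-lowering edge out of $f(\mathbf{x})$ is the split at $y$, and $\Delta(X)=X\otimes X$ returns $\mathbf{x}$.
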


{Here by $\CKh(c(\beta);i,j,k\leq -n;R)$, we mean the $k$-filtered part of filtration level $n$.} It follows that $\AKh(c(\beta),k=2-n;R)$ has a{n} $\AKh(c(\beta),k=-n;R)[-1]$ summand while ${\AKh(b(\beta), k=2-n;R)}$ has a $\AKh(b(\beta),k=-n;R)[-1]$ summand. Here{,} $[-1]$ indicates a shift in the homological grading by $-1$.

\begin{proof}

We treat the case of clasp-closures. The proof in the braid-closure case is the same in essence and strictly easier in practice.

Since $\beta$ is $\sigma$-negative $\beta$ is isotopic to a braid $\beta'$ that contains the inverse of an Artin generator $\sigma_i^{-1}$ but not the corresponding Artin generator, $\sigma_i$. Consider the diagram $D$ for $c(\beta')$ as in Figure~\ref{fig:claspclosure}. There are three complete resolutions $D_1,D_2$ and $D_3$ corresponding to ${\CAKh(c(\beta),k=-n;R)}$; these are shown in Figure~\ref{fig:CAKh-n}. There are four generators of $\CAKh(c(\beta),k=-n;R)$. They can be described as follows; for each $i$ we have a generator $\mathbf{X}_i$ where every circle in the resolution $D_i$ is decorated with an $X$. We have a final generator $\mathbf{1}$ which corresponds to decorating the homologically essential circles in diagram $D_1$ with $X$s and the homologically inessential circle with a $1$. The non-trivial components of the differential are given by $\partial_{0}\mathbf{X}_2=\partial_0\mathbf{X}_3=\mathbf{1}$ for an appropriate sign assignment.

\begin{center}
 \begin{figure}[h]
 \includegraphics[width=5cm]{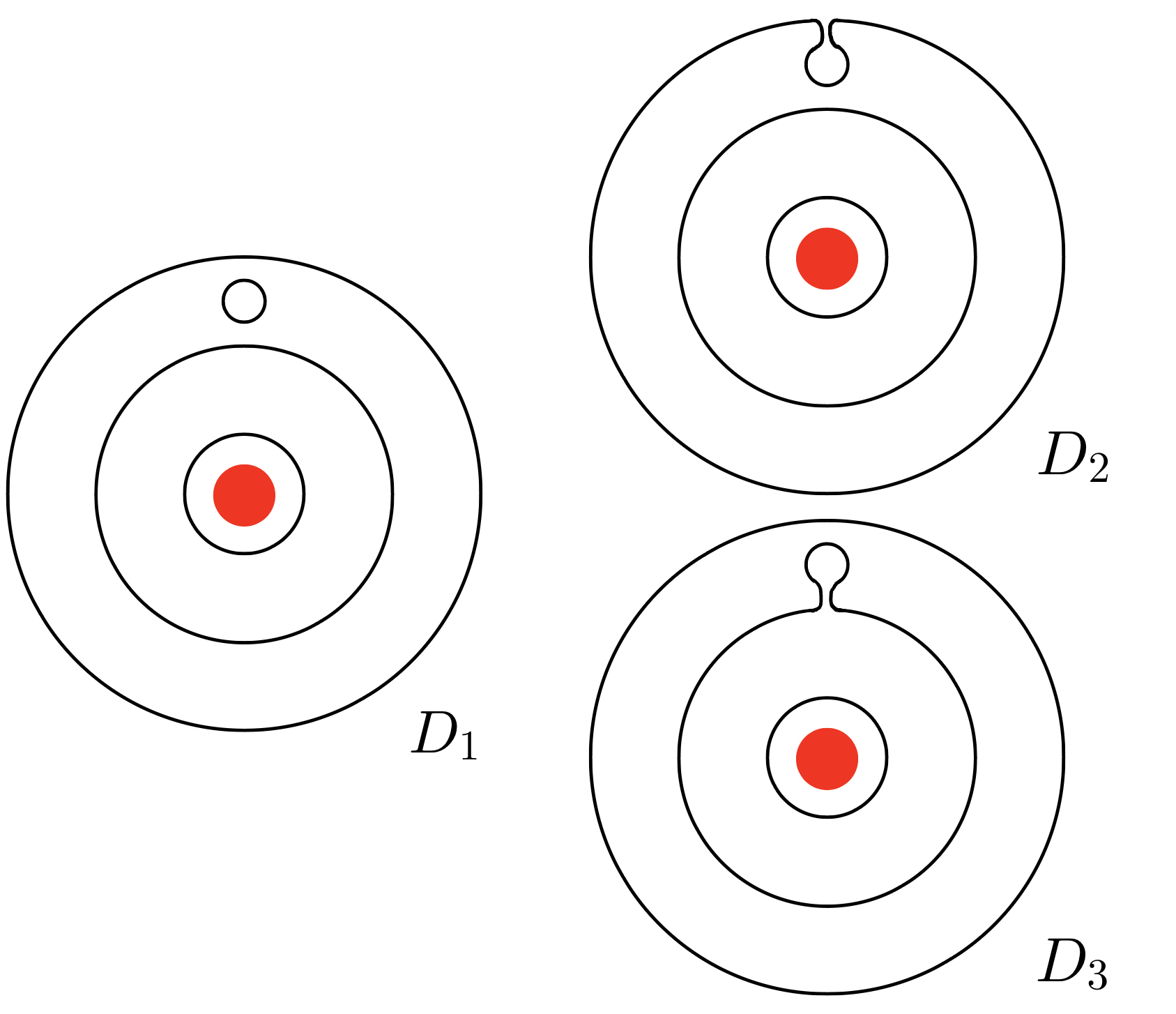}
    \caption{The resolutions for the canonical diagram --- as in Figure~\ref{fig:claspclosure} --- for a clasp-closure yielding generators of $\CAKh(c(\beta),k=-n;R)$. The solid red dot is the annular axis.}\label{fig:CAKh-n}
   \end{figure}
\end{center}

Pick one of the crossings corresponding in $D$ to a letter $\sigma_i^{-1}$ and label it {$y$}. Consider the resolutions $D_i'$ of $D$ that are identical to the resolutions $D_i$ aside from at {$y$}. Given a generator $\mathbf{x}\in\CAKh(c(\beta),R;k=-n)$ define $f_c(\mathbf{x})\in\CAKh(c(\beta),R;k=2-n)$ to be the generator which agrees with $\mathbf{X}_i$ on every circle in the resolution that does not involve {$y$}, and is labeled with an $X$ on the remaining circle. Observe that if {$\mathbf{x}$} is of $(i,j,k)$ grading $(a,b,-n)$ then $f_c(\mathbf{x})$ is of grading $(a-1,b,2-n)$.

Since $\beta$ is $\sigma$-negative, $f_c$ is a chain map viewed as a map $(\CKh(c(\beta);R),\partial)\to(\CKh(c(\beta);R),\partial_0)$. To verify this notice that the maps corresponding to changing the resolutions in the $\beta'$ part of the diagram corresponds to merging circles decorated with $X$'s. Thus the only contributions to the differential on $\CAKh(c(\beta),k=2-n;R)$ involve the crossings contained in the part of the diagram for the clasp.

One can check that $\partial_{-2}^*$ is a left-inverse to $f_c$ similarly; the only contributions to the differential which lower the annular filtration level correspond to changing the resolution at $y$. This corresponds to splitting a circle labeled with an $X$, resulting in two circles both labeled with $X$'s.
\end{proof}

\begin{proof}[Proof of Lemma~\ref{lem:nontivialrep}]
We treat only the braid-closure case, since the clasp-closure case is essentially the same. Observe first that by the previous lemma, $f_b$ induces an injection \begin{align*}
f_b^*:\AKh(b(\beta),k=-n;\C)\cong \C[0,sl(\beta),-n]\inj \AKh(b(\beta),(-1,sl(\beta),2-n);\C).\end{align*}

Here $\C[0,sl(\beta),-n]$ indicates a $\C$ summand supported in {$(i,j,k)$} grading $(0,sl(\beta),-n)$. Now, $\AKh(b(\beta);\C)$ carries the structure of an $\slrep(\C)$-representation, where each summand is supported in a single homological grading. {Observe that the summand $\C[0,sl(\beta),-n]$ and its image under $f_b^*$ are supported in different homological gradings and consequently cannot be part of the same irreducible $\slrep(\C)$ representation.} $f_b^*(\C[0,sl(\beta),-n])$ {must be a minimal annular grading $\C$-summand of its $\slrep(\C)$ representation, as else we would have that $\rank(\AKh(b(\beta),k=-n;\C))\geq 2$, which is a contradiction since $b(\beta)$ is a braid-closure.} It follows that the $\AKh(b(\beta);\C)$ contains the two desired representations as summands.

The structure of these summands as an $\slrep(\wedge)$ representation follow from the fact that  $\partial_{-2}^*$ is part of the $\slrep(\wedge)$ action.
\end{proof}

We can now extract a rank bound for annular Khovanov homology with $\Z/2$ coefficients from Lemma~\ref{lem:nontivialrep} and the proof of Lemma~\ref{lem:chainlevel}.

\begin{lemma}\label{lem:akhboundmod2}
    Let $\beta$ be a non-identity $n>1$-braid. Then; \begin{align*}  
    \rank(\Kh(b(\beta));\Z/2)\leq \rank(\AKh(b(\beta);\Z/2))-2(n-1),\end{align*}

\noindent    while;
    \begin{align*}     
    \rank(\Kh(c(\beta);\Z/2)))\leq \rank(\AKh(c(\beta);\Z/2))-4(n-1).\end{align*}
\end{lemma}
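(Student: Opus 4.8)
The plan is to run the annular-to-Khovanov spectral sequence and show that the summands produced by Lemma~\ref{lem:nontivialrep} contribute nothing to its $E_\infty$ page, so they count as ``wasted'' rank. Concretely, recall that $\AKh(L;\Z/2)$ is the $E_1$ page of a spectral sequence converging to $\Kh(L;\Z/2)$, whose first differential $\partial_{-2}^*$ lowers the annular grading by $2$ and raises the homological grading by $1$. Summing dimensions over pages gives the standard inequality $\rank(\Kh(L;\Z/2))\leq\rank(\AKh(L;\Z/2))-2\cdot(\text{number of }\partial_{-2}^*\text{-cancelling pairs on }E_1)$, and more generally one loses $2$ for every cancelling pair on every page. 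So it suffices to exhibit, inside $\AKh(b(\beta);\Z/2)$ (resp.\ $\AKh(c(\beta);\Z/2)$), $n-1$ (resp.\ $2(n-1)$) pairs of generators that cancel against each other under $\partial_{-2}^*$ on $E_1$, hence never survive.

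First I would set up the mod-$2$ version of Lemma~\ref{lem:chainlevel}: the chain map $f_b$ (resp.\ $f_c$) and the statement that $\partial_{-2}^*\circ f_b$ is the identity on the relevant $E_1$ piece were proven at the chain level in the proof of Lemma~\ref{lem:chainlevel} using only the combinatorics of a $\sigma$-negative diagram and labelled resolutions, so they hold verbatim over $\Z/2$. Thus $f_b^*$ injects $\AKh(b(\beta),k=-n;\Z/2)$, which is a single $\Z/2$ in homological grading $0$, into $\AKh(b(\beta),k=2-n;\Z/2)$ in homological grading $-1$, and the image is hit by $\partial_{-2}^*$. Over $\C$ this one generator propagated, via the $\mathfrak{sl}_2$-structure, to a whole $V_n^0\oplus V_{n-2}^{-1}$ summand; over $\Z/2$ we do not have the representation theory, but we do not need the full summand --- we only need that the single bottom generator $\mathbf{X}$ of $\AKh(b(\beta),k=-n;\Z/2)$ and its image $f_b^*(\mathbf{X})$ form a $\partial_{-2}^*$-cancelling pair, and then that there are enough further such pairs in higher annular gradings.

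The main obstacle is producing $n-1$ independent cancelling pairs rather than just one, without the $\mathfrak{sl}_2$ machinery. Here I would argue as follows: iterate the construction of $f_b$ across annular levels. The resolutions $D_i$ realising the minimal annular grading $k=-n$, and the crossing $y$ used to build $f_b$, sit inside a family of resolutions $D_i'$ differing only at $y$; choosing $y$ to range over the $n-1$ distinct ``lowest-index'' $\sigma$-inverses available in the $\sigma$-negative word (or, more robustly, choosing nested chains of such crossings) produces, for each $1\le m\le n-1$, a chain map $\AKh(b(\beta),k=-n;\Z/2)\to\AKh(b(\beta),k=-n+2m;\Z/2)$ together with a partial inverse built from $\partial_{-2}^*$, giving cancelling pairs in annular gradings $-n+2m$ and $-n+2(m-1)$. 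Since these live in pairwise distinct annular gradings they are linearly independent, and each kills $2$ from the rank count as one passes to $\Kh$. This yields the $2(n-1)$ deficit for $b(\beta)$; the clasp case is identical except that $\AKh(c(\beta),k=-n;\Z/2)$ has rank $2$ rather than $1$ --- two bottom generators $\mathbf{X}_1,\mathbf{1}$ survive to $E_1$, as computed in the proof of Lemma~\ref{lem:chainlevel} --- so each stage contributes two cancelling pairs instead of one, doubling the deficit to $4(n-1)$. I would finish by noting that $\rank(\AKh;\Z/2)$ is unchanged under the spectral sequence except for these cancellations, so the inequalities $\rank(\Kh(b(\beta));\Z/2)\le\rank(\AKh(b(\beta);\Z/2))-2(n-1)$ and $\rank(\Kh(c(\beta));\Z/2)\le\rank(\AKh(c(\beta);\Z/2))-4(n-1)$ follow.
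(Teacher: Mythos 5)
Your overall reduction is the right one (the rank deficit follows once $\partial_{-2}^*$ has rank at least $n-1$, resp.\ $2(n-1)$, over $\Z/2$), and your observation that Lemma~\ref{lem:chainlevel} works verbatim over $\Z/2$ is correct. But the step where you manufacture $n-1$ independent cancelling pairs ``without the $\mathfrak{sl}_2$ machinery'' is a genuine gap, and the construction you sketch does not close it. First, $\sigma$-negativity only guarantees that the lowest-index generator occurs with negative exponents; it does not provide $n-1$ distinct such crossings, so letting $y$ range over them is not available in general. More seriously, every map built as in Lemma~\ref{lem:chainlevel} --- whatever crossing $y$ you choose --- takes the bottom annular level $k=-n$ to the level $k=2-n$ in homological grading $-1$, and the corresponding $\partial_{-2}^*$-images land back in $\AKh(b(\beta),k=-n;\Z/2)$, which has rank one (rank two in the clasp case). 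So this mechanism can certify at most one (resp.\ two) cancelling pairs, never $n-1$ (resp.\ $2(n-1)$): the remaining pairs must live between the higher annular levels $k=-n+2m$ and $k=-n+2(m-1)$, and a chain map out of the rank-one bottom level says nothing about $\partial_{-2}^*$ there. Your ``iterate across annular levels / nested chains of crossings'' maps $\AKh(b(\beta),k=-n)\to\AKh(b(\beta),k=-n+2m)$ with partial inverses are precisely what is not constructed; at higher annular gradings the contributing resolutions are no longer the all-$X$ braid-like ones, and the ``merging circles labelled $X$'' argument that made $f_b$ a chain map breaks down.

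This is exactly the point where the paper's proof needs the representation theory: over $\C$, $\partial_{-2}^*$ is part of the $\slrep(\wedge)$ current-algebra action, so non-vanishing on the bottom generator of the $V_{n-2}^{-1}$ summand from Lemma~\ref{lem:nontivialrep} forces non-vanishing on the entire rank-$(n-1)$ summand, spreading the one chain-level observation across the annular gradings $2-n,4-n,\dots$ (and across two such summands in the clasp case). The mod-$2$ statement is then obtained not by redoing the construction over $\Z/2$ but by transferring rank: the naturality of the universal coefficient theorem applied first with $\C$ and then with $\Z/2$ coefficients shows that the $\Z^{\oplus(n-1)}$ block on which $\partial_{-2}^*$ acts as the identity over $\Z$ (detected by the $\C$-computation together with the canonical integral basis at the chain level) survives reduction mod $2$. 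If you want a proof avoiding the $\slrep(\wedge)$ input altogether, you would need a new argument producing nonzero components of $\partial_{-2}^*$ between each pair of adjacent annular levels up to $k=n-2$; as written, your proposal only establishes it at the bottom level.
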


\begin{proof}

Suppose $\beta$ is as in the statement of the Lemma. Since any non-identity braid $\beta$ is either $\sigma$-negative or $\sigma$-positive, we have four cases to consider. We prove the result in the $\sigma$-negative braid-closure case. The other three cases are similar.

It suffices to show that the rank of the map $\partial_{-2}^*:\AKh(b(\beta);\Z/2)\to\AKh(b(\beta);\Z/2)$ is at least $n-1$. The universal coefficient theorem for homology is functorial, so considering the map $\partial_{-2}^*$ we obtain the following commutative diagram:

\begin{center}
\begin{equation}
\begin{tikzcd}
    0\arrow[r]&\AKh_{i}(b(\beta);\Z)\otimes\F \arrow[r,"\mu"]\arrow[d,"\partial_{-2}^*\otimes \mathbf{1}"]&\AKh_i(b(\beta);\F)\arrow[r]\arrow[d,"\partial_{-2}^*"]&\Tor^1_\Z(\AKh_{i+1}(b(\beta);\Z),\F)\arrow[r,"h"]\arrow[d]&0\\ 0\arrow[r]&\AKh_{i+1}(b(\beta);\Z)\otimes\F \arrow[r,"\mu"]&\AKh_{i+1}(b(\beta);\F)\arrow[r]&\Tor^1_\Z(\AKh_{i+2}(b(\beta);\Z),\F)\arrow[r,"h"]&0
\end{tikzcd}\label{eq:naturaluc}
\end{equation}\end{center}

Here $\mu$ is the map defined on elementary tensors given by $\mu(\mathbf{x}\otimes a)\mapsto a\mathbf{x}$. Taking $\F=\C$, so that $\Tor^1_\Z(\AKh_{j}(b(\beta);\Z),\F)$ vanishes for all $j$,  and setting $i=-1$ we obtain:

\begin{center}
\begin{equation}
\begin{tikzcd}
    0\arrow[r]&\AKh_{-1}(b(\beta);\Z)\otimes\C \arrow[r,"\mu"]\arrow[d,"\partial_{-2}^*\otimes \mathbf{1}"]&\AKh_{-1}(b(\beta);\C)\arrow[r]\arrow[d,"\partial_{-2}^*"]&0\\ 0\arrow[r]&\AKh_{0}(b(\beta);\Z)\otimes\C \arrow[r,"\mu"]&\AKh_0(b(\beta);\C)\arrow[r]&0
\end{tikzcd}\label{eq:universC}
\end{equation}\end{center}

Now, from the proof of Lemma~\ref{lem:chainlevel}, {${\partial_{-2}^*:\AKh_{-1}(b(\beta);\C)\to \AKh_0(b(\beta);\C)}$} has a component given by the identity map {$\C^{\oplus(n-1)}\to\C^{\oplus(n-1)}$} with respect to the canonical basis for $\AKh(b(\beta);\C)$. {More specifically, observe that since $\partial_{-2}^*$ is non-trivial on a bottom generator of $V_{n-2}^{-1}\{n-2+sl(\beta)\}$ and $\partial_{-2}^*$ is part of the structure of $\AKh(b(\beta);\C)$ as an $\slrep(\wedge)$-representation, we have that $\partial_2^*$ is non-trivial on the entire $V_{n-2}^{-1}\{n-2+sl(\beta)\}$ summand. It follows from Diagram}~(\ref{eq:universC}) {that $\AKh_{-1}(b(\beta);\Z)$ and $\AKh_{0}(b(\beta);\Z)$ contain $\Z^{\oplus (n-1)}$ summands and that $\partial_{-2}^*$ acts as the identity map between these two summands.} 

Now take $\F=\Z/2$ in {D}iagram~(\ref{eq:naturaluc}). By the commutativity of the diagram we deduce that {$\partial_{-2}^*:\AKh_{0}(b(\beta);\Z/2)\to\AKh_{0}(b(\beta);\Z/2)$} has a component given by the identity map {$(\Z/2)^{\oplus(n-1)}\to(\Z/2)^{\oplus(n-1)}$}.{The result follows.}
\end{proof}

\begin{proof}[Proof of Theorem~\ref{thm:rankboundforarbbraid}]
    Suppose $\beta$ is a braid. Then $\beta$ is either $\sigma$-positive, $\sigma$-negative or the identity. If $\beta$ is the identity then $\rank(\AKh(b(\beta);\C))=2^{n}\geq 2n$.

    If $\beta$ is $\sigma$-negative the result follows immediately from {Lemma}~\ref{lem:nontivialrep}. If $\beta$ is $\sigma$-positive, the result follows from applying {Lemma}~\ref{lem:nontivialrep} to the mirror of $\beta$ --- which is $\sigma$-negative --- and applying symmetry properties of annular Khovanov homology.
\end{proof}

We can also prove an annular Khovanov homology analogue of a result of Ni from knot Floer homology~\cite[Theorem A.1]{ni2020exceptional}. To do so{,} we exploit a geometric interpretation of the ordering of the braid group in terms of curve diagrams; see~\cite[Chapter 10]{dehornoy2008ordering}.  Recall that $n$-braids can be viewed as mapping classes of $n$-punctured disks. Recall too that a braid is \emph{right (left) veering} if it sends every \emph{admissible} arc to the right (left). See~\cite[Section 3.1]{BaldwinGrigsby} for a definition of admissible. If a braid is non right (left) veering then it is conjugate to a $\sigma$-negative (positive) braid --- see the proof of~\cite[Proposition 3.1]{BaldwinGrigsby}.

\begin{proposition}
    Suppose $\beta$ is a non right-veering and non left-veering $n$-braid, with $n\geq 4$. Then {$\rank(\AKh(b(\beta);\C))\geq 4n-4$}.%=\neq n+1+2(n), =n+1+n-1+n-1+n-4+1. 
\end{proposition}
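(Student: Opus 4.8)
The plan is to combine the two chain-level maps built in Lemma~\ref{lem:chainlevel} --- one coming from the $\sigma$-negativity hypothesis and one from the $\sigma$-positivity hypothesis --- and to show that their images in $\AKh(b(\beta);\C)$ span independent pieces of $\slrep(\wedge)$-subrepresentations. Since $\beta$ is neither right-veering nor left-veering, by the result cited from~\cite{BaldwinGrigsby} (the proof of~\cite[Proposition 3.1]{BaldwinGrigsby}) $\beta$ is conjugate both to a $\sigma$-negative braid and to a $\sigma$-positive braid. Annular Khovanov homology is a conjugacy invariant of braid-closures, so we may apply Lemma~\ref{lem:nontivialrep} to $\beta$ (using the $\sigma$-negative representative) and Lemma~\ref{lem:nontivialrep} to the mirror $\bar\beta$ (using the $\sigma$-positive representative, whose mirror is $\sigma$-negative), together with the symmetry properties of annular Khovanov homology. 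This yields that $\AKh(b(\beta);\C)$ contains a summand $V_n^0\{n+\slflk(\beta)\}\oplus V_{n-2}^{-1}\{n-2+\slflk(\beta)\}$ from the first application and a summand of the form $V_n^{a}\{b\}\oplus V_{n-2}^{a-1}\{b-2\}$ coming from the mirrored application (with $a,b$ determined by $\bar\beta$).

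The key step is then to argue that these two $V_n$'s and two $V_{n-2}$'s are genuinely distinct summands, so that $\rank(\AKh(b(\beta);\C))\geq 2(n+1)+2(n-1)=4n$; then I would need to improve this slightly to $4n-4$, or rather I expect the honest bound is $4n$ in the generic case and the statement $4n-4$ is the safe uniform bound that survives all degenerate overlaps. To see the two $V_n$ summands are distinct: the first sits in homological grading $0$ and quantum grading $n+\slflk(\beta)$ in its top annular generator, whereas the one from the mirror sits in homological grading $-\,(\text{something determined by }n_-(\bar\beta))$ and a quantum grading governed by $\slflk(\bar\beta)=-\slflk(\beta)$; since $\beta$ is not right-veering its writhe data forces these gradings to differ (the minimal-annular-grading generator of $\AKh(b(\beta),k=-n;\C)$ is the Plamenevskaya class in homological grading $0$, while the corresponding class for $b(\bar\beta)$, transported back by the mirroring symmetry, lands in homological grading equal to minus the number of positive crossings, which is nonzero because $\beta$ is non-trivial). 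One then checks the analogous grading comparison for the two $V_{n-2}$ summands, which follows from the explicit grading shifts in Lemma~\ref{lem:nontivialrep} and the fact that $\partial_{-2}^*$ shifts homological grading by exactly one.

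The main obstacle I anticipate is controlling possible coincidences and overlaps among the four irreducible summands: a priori the $V_{n-2}$ coming from the $\sigma$-negative side could coincide with, or be forced to live inside the same ambient $\slrep(\wedge)$-representation as, the $V_n$ coming from the $\sigma$-positive side, and the explicit grading bookkeeping in terms of $\slflk$, the writhe, and the number of Seifert circles needs to be done carefully to rule this out. Because $n\geq 4$ the two $V_{n-2}$ summands each have rank $n-1\geq 3$ and the two $V_n$ summands each have rank $n+1\geq 5$, so even if (say) the two $V_n$ summands were to coincide we would still retain $2(n+1)+ (n-1) = 3n+1 \geq 4n-4$ once $n\geq 5$, and the $n=4$ case can be checked directly against the finite list of non-veering $4$-braids of small complexity or by a slightly sharper grading argument; I would organize the proof so that the worst-case overlap analysis always leaves at least $4n-4$ generators. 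Concretely, the steps are: (1) invoke non-veering $\Rightarrow$ conjugate to $\sigma$-negative and to $\sigma$-positive; (2) apply Lemma~\ref{lem:nontivialrep} twice, once directly and once to the mirror, using conjugacy- and mirror-invariance of $\AKh(\,\cdot\,;\C)$; (3) compare the homological and quantum gradings of the four resulting irreducible summands using the explicit shifts and the distinctness of $\slflk(\beta)$ versus $-\slflk(\beta)$ and of homological grading $0$ versus the crossing-count grading; (4) conclude that, allowing for the at-most-one forced coincidence, at least $4n-4$ independent generators remain.
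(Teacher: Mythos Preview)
Your approach has a genuine gap: the two $V_n$ summands you extract from the $\sigma$-negative and $\sigma$-positive sides are \emph{necessarily the same summand}. For any $n$-braid closure $b(\beta)$, the annular grading $k=-n$ part of $\AKh(b(\beta);\C)$ is one-dimensional (generated by the Plamenevskaya class in homological grading $0$), and likewise $k=n$ is one-dimensional. Thus there is exactly one $V_n$ summand, namely $V_n^0$. Your grading argument for distinguishing the two $V_n$'s is incorrect: the Plamenevskaya class for $b(\bar\beta)$ sits in homological grading $0$, and under the mirroring symmetry (which negates $i$, $j$, and $k$) it is carried to the $k=n$ generator of $\AKh(b(\beta);\C)$ in homological grading $0$ --- i.e.\ the top generator of the very same $V_n^0$.

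Once the two $V_n$'s are identified, your list of summands collapses to $V_n^0\oplus V_{n-2}^{-1}\oplus V_{n-2}^{1}$, of total rank $(n+1)+(n-1)+(n-1)=3n-1$, which is strictly less than $4n-4$ for every $n\geq 4$. (Your overlap count ``$2(n+1)+(n-1)=3n+1$'' is also miscomputed.) So the bound cannot be reached by bookkeeping these three summands alone; a genuinely new idea is needed to produce a fourth.

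The paper's argument supplies exactly this. It uses that $\partial_{-2}^*$ is a differential on the $E_1$ page, so $(\partial_{-2}^*)^2=0$. From the $\sigma$-negative side, $\partial_{-2}^*$ carries $V_{n-2}^{-1}$ nontrivially into $V_n^0$; from the $\sigma$-positive side (via mirroring), $\partial_{-2}^*$ carries $V_n^0$ nontrivially into $V_{n-2}^{1}$. If the generator of $V_n^0$ in $(i,k)$-grading $(0,4-n)$ were the \emph{only} class in that bigrading, then composing these two nontrivial maps would give $(\partial_{-2}^*)^2\neq 0$ on a suitable class of $V_{n-2}^{-1}$, a contradiction. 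Hence there is at least one further generator in $(i,k)=(0,4-n)$, forcing (by the $\slrep(\C)$-structure) an additional irreducible summand of rank at least $n-3$, namely at least a $V_{n-4}$. This yields $(n+1)+(n-1)+(n-1)+(n-3)=4n-4$, and is precisely where the hypothesis $n\geq 4$ enters.
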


\begin{proof}
    Suppose $\beta$ is as in the statement of the proposition. Since $V$ is an $n$-braid there is a $V_n^0$ summand. Since $\beta$ is non right-veering, it is conjugate to a braid $\beta'$ that is $\sigma$-negative. Since $b(\beta')=b(\beta)$, $\AKh(b(\beta);\C)$ has a $V_{n-2}^{-1}$-summand by Lemma~\ref{lem:nontivialrep}. Similarly, since $\beta$ is non left{-}veering{,} $\AKh(b(\beta);\C)$ contains a $V_{n-2}^1$ summand  by Lemma~\ref{lem:nontivialrep}. 
    
    Assume towards a contradiction that there is a unique generator in $(i,k)$-grading $(0,4-n)$. Consider the generator $\mathbf{x}$ of $(i,k)$- grading {${(-1,6-n)}$}. Consider ${\partial_{-2}^*:\AKh(b(\beta);\C)\to\AKh(b(\beta);\C)}$. Since $b(\beta)$ is is isotopic to the braid-closures of $\sigma$-positive and $\sigma$-negative words, Lemma~\ref{lem:nontivialrep} {and the symmetry properties of the spectral sequence from annular Khovanov homology to Khovanov homology under mirroring }impl{y} that $(\partial_{-2}^*)^2(\mathbf{x})\neq 0$, a contradiction. The result now follows from noting that $\AKh(b(\beta);\C)$ carries the structure of an $\slrep(\C)$-representation; {specifically there must be at least one more generator in $(i,k)$-grading $(0,4-n)$, and so in turn another $V_{n-4}$-summand}.
\end{proof}

In the case that $n=3$ we have the following;

\begin{proposition}
    Suppose $\beta$ is a $3$-braid that i{s} non right-veering and non left-veering. Then $\rank(\AKh(b(\beta);\C)\geq10 $%4+2+2
\end{proposition}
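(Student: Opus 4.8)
The plan is to mimic the proof of the preceding proposition (the $n\geq 4$ case), adapting the representation-theoretic bookkeeping to $n=3$. Since $\beta$ is a $3$-braid there is a $V_3^0$ summand in $\AKh(b(\beta);\C)$, supported in annular gradings $-3,-1,1,3$. Because $\beta$ is non right-veering it is conjugate to a $\sigma$-negative braid $\beta'$ with $b(\beta')=b(\beta)$, so Lemma~\ref{lem:nontivialrep} gives a $V_{3-2}^{-1}=V_1^{-1}$ summand, supported in annular gradings $-1,1$, together with the information that $\partial_{-2}^*$ sends the bottom ($k=-1$) generator of this $V_1^{-1}$ to the bottom ($k=-3$) generator of $V_3^0$. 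Symmetrically, since $\beta$ is non left-veering, applying Lemma~\ref{lem:nontivialrep} to the mirror of $\beta$ and using the symmetry properties of annular Khovanov homology under mirroring yields a $V_1^{1}$ summand, with $\partial_{-2}^*$ sending its top ($k=1$) generator to the top ($k=3$) generator of $V_3^0$ (here one must use that $\partial_{-2}^*$ under mirroring becomes the corresponding raising-type differential, exactly as in the preceding proof). So far we have accounted for $4+2+2=8$ generators.

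Next I would run the same contradiction argument as in the $n\geq 4$ case, but now tracking the $(i,k)$-gradings carefully for $n=3$. Assume toward a contradiction that $\rank(\AKh(b(\beta);\C))=8$, i.e. that $V_3^0\oplus V_1^{-1}\oplus V_1^{1}$ is all of it. Consider the generator $\mathbf{x}$ in the lowest annular grading $k=-1$ of the $V_1^{-1}$ summand: it sits in $(i,k)$-grading $(-1,-1)$ and $\partial_{-2}^*(\mathbf{x})$ is the bottom generator of $V_3^0$, in $(i,k)$-grading $(0,-3)$, which is nonzero. Now apply $\partial_{-2}^*$ again: $(\partial_{-2}^*)^2(\mathbf{x})$ would land in $(i,k)$-grading $(1,-5)$. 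But $k=-5$ lies below the support of $V_3^0\oplus V_1^{-1}\oplus V_1^{1}$ (whose minimal annular grading is $-3$), so under the assumed rank this is forced to be zero, which is automatic and gives no contradiction --- so the argument must instead be run one rung up, on the generator of $(i,k)$-grading with $k=1$: looking at the raising differential coming from the $\sigma$-positive side. The clean way is to use the $\slrep(\wedge)$-current-algebra structure: $\partial_{-2}^*$ is part of the current-algebra action, and the composition of the lowering map (from the $\sigma$-negative side) with the raising map (from the $\sigma$-positive side) on the middle generators of $V_3^0$ is governed by $\slrep$ relations; if $\AKh$ were exactly $V_3^0\oplus V_1^{-1}\oplus V_1^{1}$ these relations fail to close up (the map $\partial_{-2}^*$ restricted to the three summands would have to satisfy incompatible constraints coming simultaneously from the $\sigma$-negative and the mirrored $\sigma$-positive applications of Lemma~\ref{lem:nontivialrep}). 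This forces at least one further $V_n$-summand. A $V_3$ would add $4$ (giving rank $\geq 12 > 10$); a $V_1$ adds $2$, giving rank $\geq 10$; a $V_0$ adds $1$, but parity/$\slrep$-consistency of the whole module (the total $\C$-dimension in each annular grading must be compatible with a direct sum of $V_n$'s, and the extra summand must pair up to fix the current-algebra obstruction) forces an \emph{even} contribution, so rank $\geq 10$.

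Concretely the key steps in order are: (1) produce $V_3^0$, $V_1^{-1}$, $V_1^1$ as summands via Lemma~\ref{lem:nontivialrep} applied to $\beta$ and to its mirror; (2) record the two facts that $\partial_{-2}^*$ is nontrivial on the bottom of $V_1^{-1}$ (hitting the bottom of $V_3^0$) and, after mirroring, the analogous raising statement relating $V_1^1$ and $V_3^0$; (3) assume rank $=8$ and derive a contradiction from the failure of the current-algebra/$\slrep(\wedge)$ relations to be simultaneously satisfied by the $\sigma$-negative and $\sigma$-positive data on $V_3^0\oplus V_1^{-1}\oplus V_1^1$; (4) conclude there is at least one additional summand and check, using the $\slrep(\C)$-representation structure together with the obstruction from step (3), that its dimension must be even, hence $\geq 2$, giving $\rank(\AKh(b(\beta);\C))\geq 10$.

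The main obstacle I anticipate is step (3): making precise why the module $V_3^0\oplus V_1^{-1}\oplus V_1^1$, equipped with a degree-$(-1,0,-2)$ operator $\partial_{-2}^*$ that is nontrivial in the two prescribed places and that is part of an $\slrep(\wedge)$-current-algebra action, cannot exist --- equivalently, why the $n=3$ analogue of the ``unique generator in $(i,k)$-grading $(0,4-n)$'' contradiction still bites when $4-n=1$ and the relevant gradings are cramped against the edge of the support. This likely requires a hands-on look at which pairs of generators $\partial_{-2}^*$ connects inside $V_3^0$ (the middle two generators, in annular gradings $-1$ and $1$) and verifying that the $\sigma$-negative input forces $\partial_{-2}^*$ to be nonzero on the $k=-1$ generator of $V_3^0$ while the mirrored $\sigma$-positive input forces it to be nonzero on the $k=1$ generator, and that these two nonvanishing statements, combined with $(\partial_{-2}^*)^2=0$ on the $\slrep(\wedge)$-irreducible pieces and the homological-grading shift, cannot all hold within an $8$-dimensional module. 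Once that incompatibility is pinned down, the rest is routine representation bookkeeping.
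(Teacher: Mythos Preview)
Your steps (1) and (2) match the paper. The gap is in step (3), and you yourself flag it: you never actually produce the contradiction. The $(\partial_{-2}^*)^2=0$ argument from the $n\geq 4$ case genuinely does not go through when $n=3$, as you observe, and your fallback to unspecified ``current-algebra relations that fail to close up'' is not an argument --- you would need to write down exactly which $\slrep(\wedge)$ relation is violated, and it is not clear that any is.

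The paper's step (3) is much simpler and avoids this entirely. Assume $\AKh(b(\beta);\C)=V_1^{-1}\oplus V_3^{0}\oplus V_1^{1}$. From the $\sigma$-negative side, Lemma~\ref{lem:nontivialrep} (together with the $\slrep(\wedge)$ compatibility, exactly as in the proof of Lemma~\ref{lem:akhboundmod2}) forces $\partial_{-2}^*$ to carry all of $V_1^{-1}$ injectively into $V_3^0$; from the mirrored $\sigma$-positive side one gets the dual statement pairing $V_3^0$ with $V_1^{1}$. Thus every one of the eight generators is killed on the $E_1$ page of the spectral sequence to $\Kh(b(\beta);\C)$, so $\Kh(b(\beta);\C)=0$, which is absurd. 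That is the whole contradiction.

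For step (4), your justification of evenness is also off. The reason no $V_0$ or $V_2$ can occur is not an abstract ``parity/$\slrep$-consistency'' constraint but the concrete fact that $b(\beta)$ has wrapping number $3$, so its annular Khovanov homology is supported entirely in odd annular gradings; hence only $V_1$ and $V_3$ summands appear and the total rank is even. This is exactly how the paper passes from rank $>8$ to rank $\geq 10$.
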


\begin{proof}
     Since $V$ is a $3$-braid there is a $V_3^0$ summand. Since $\beta$ is non right-veering there is a $V_{1}^{-1}$ summand by Lemma~\ref{lem:nontivialrep}. Since $\beta$ is non left{-}veering there is a $V_{1}^1$ summand by Lemma~\ref{lem:nontivialrep}. Assume towards a contradiction that {$\rank(\AKh(b(\beta);\C))<8$}. Then in fact:\begin{align*}
        \AKh(b(\beta);\C)\cong V_1^{-1}\oplus V_3^{0}\oplus V_1^{1}.
      \end{align*} {Lemma}~\ref{lem:nontivialrep} implies that all generators die under the spectral sequence from  $\AKh(b(\beta);\C)$ to $\Kh(b(\beta);\C)$, a contradiction. Thus $\rank(\AKh(b(\beta);\C))>8$. The result now follows from the fact that  $\rank(\AKh(b(\beta);\C))$ is even for $3$-braids --- which have wrapping number $3$ --- since it splits as a direct sum of $V_3$ and $V_1$ summands.
\end{proof}

It is unclear to the author if similar results could be obtained for clasp-closures, since clasp-closures of conjugate braids are not necessarily isotopic, so the proof strategy above break{s} down.

\subsection{Applications of the Rank Bound}\label{sec:applications}

Let $\beta_n$ denote the $n$-braid $\sigma_1\sigma_2\dots\sigma_{n-1}$, and $\mathbbm{1}_n$ denote the identity $n$-braid.

\begin{proposition}
    
\label{prop:ranktohomology}
    Suppose $\alpha$ is an $n$-braid, with $n>2$. If $\rank(\AKh(b(\alpha);\C))=2n$ then $\AKh(b(\alpha);\C)\cong\AKh(b(\beta_n^{\pm 1});\C)$.
\end{proposition}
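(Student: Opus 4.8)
The plan is to combine the structure of $\AKh(b(\alpha);\C)$ as an $\slrep(\C)$-representation with the rank bound machinery of Lemma~\ref{lem:nontivialrep} to force $\alpha$ to be essentially a power of $\sigma_1\cdots\sigma_{n-1}$, and then to identify the annular Khovanov homology. First I would observe that, since $\alpha$ is an $n$-braid with $n>2$, the wrapping number of $b(\alpha)$ is $n$, so $\AKh(b(\alpha);\C)$ contains a $V_n$ summand (the one generated by Plamenevskaya's transverse invariant in its lowest annular grading, sitting in homological grading $0$). If $\alpha$ is the identity then $\rank(\AKh(b(\mathbbm{1}_n);\C))=2^n>2n$ for $n>2$, so $\alpha$ is non-identity, hence either $\sigma$-positive or $\sigma$-negative. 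In the $\sigma$-negative case Lemma~\ref{lem:nontivialrep} gives a $V_{n-2}^{-1}$ summand with the $\slrep(\wedge)$ action nontrivially connecting it to the $V_n^0$ summand; in the $\sigma$-positive case, applying the lemma to the mirror and using the symmetry of $\AKh$ under mirroring gives a $V_{n-2}^{+1}$ summand. Either way, $\rank(V_n) + \rank(V_{n-2}) = (n+1)+(n-1) = 2n$, so if $\rank(\AKh(b(\alpha);\C))=2n$ then in fact
\begin{equation*}
\AKh(b(\alpha);\C)\cong V_n^0\{*\}\oplus V_{n-2}^{\mp 1}\{*\}
\end{equation*}
as $\slrep(\C)$-representations, with the homological and quantum shifts determined by the self-linking number $\slflk(\alpha)$ in the $\sigma$-negative case as in Lemma~\ref{lem:nontivialrep} (and the mirror statement in the $\sigma$-positive case).

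Next I would pin down the grading shifts. In the $\sigma$-negative case Lemma~\ref{lem:nontivialrep} already tells us the $V_n$ summand is $V_n^0\{n+\slflk(\alpha)\}$ and the $V_{n-2}$ summand is $V_{n-2}^{-1}\{n-2+\slflk(\alpha)\}$, so $\AKh(b(\alpha);\C)$ is completely determined by $n$ and $\slflk(\alpha)$. So it suffices to check that $b(\beta_n^{\pm1})$ realizes these invariants: I would note $\beta_n = \sigma_1\cdots\sigma_{n-1}$ is $\sigma$-negative after replacing each $\sigma_i$ with its inverse (i.e. $\beta_n^{-1}$ is $\sigma$-negative, $\beta_n$ is $\sigma$-positive), that $b(\beta_n^{\pm1})$ is the unknot (it is a classical fact that the closure of $\sigma_1\cdots\sigma_{n-1}$ is the unknot), and that the annular Khovanov homology of $b(\beta_n^{-1})$ is a direct computation: its underlying link is the unknot, so $\Kh(b(\beta_n^{-1});\C)$ has rank $2$, and by Lemma~\ref{lem:akhboundmod2} (or directly by Lemma~\ref{lem:nontivialrep}) the only $\slrep(\C)$ summands allowed are $V_n$ and $V_{n-2}$, giving rank exactly $2n$. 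Then the self-linking numbers $\slflk(\beta_n^{\pm 1})$ match up with the quantum gradings, and because $\AKh(b(\alpha);\C)$ is determined by $(n,\slflk(\alpha))$ among braid closures with this rank, and $\slflk$ only changes the overall quantum shift, one concludes $\AKh(b(\alpha);\C)\cong\AKh(b(\beta_n^{\pm1});\C)$ with the sign matching the sign of whether $\alpha$ is $\sigma$-negative or $\sigma$-positive.

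The main obstacle I anticipate is the bookkeeping of the quantum grading shift: the proposition is stated as an isomorphism of annular Khovanov homology groups, so I need the quantum shift of the $V_n\oplus V_{n-2}$ pattern for $b(\alpha)$ to \emph{literally agree} with that of $b(\beta_n^{\pm1})$, not just agree up to an overall shift. This requires knowing that $\slflk(\alpha)$ is forced once $\rank(\AKh(b(\alpha);\C))=2n$ and $\alpha$ is $\sigma$-negative --- equivalently, that two $\sigma$-negative $n$-braids with rank-$2n$ annular Khovanov homology have the same self-linking number. If this is not automatic, I would instead read the statement as being up to overall quantum grading shift (which is the natural normalization for the $\slrep(\C)$-module structure, as flagged in the review subsection), in which case the argument above is complete: the $\slrep(\C)$-module structure together with the $\slrep(\wedge)$-differential pins down $\AKh(b(\alpha);\C)$ to $V_n^0\oplus V_{n-2}^{\mp1}$ up to quantum shift, and the same holds for $\beta_n^{\pm1}$, so they agree. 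A secondary, smaller point to verify carefully is that there are no \emph{other} $V_m$ summands hiding with $m<n-2$: this is exactly where the rank hypothesis $\rank = 2n$ is used, since $V_n\oplus V_{n-2}$ already accounts for all $2n$ dimensions.
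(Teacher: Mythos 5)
Your proposal follows essentially the same route as the paper: rule out the identity braid because its annular Khovanov homology has rank $2^n$, apply Lemma~\ref{lem:nontivialrep} in the $\sigma$-negative case (and its mirror in the $\sigma$-positive case) to produce a $V_n\oplus V_{n-2}$ summand, and then use the rank hypothesis $2n=(n+1)+(n-1)$ to conclude this summand is everything; your extra care about the overall quantum shift is a point the paper's proof glosses over, and your resolution of it is reasonable.

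One small caveat: your justification that $\rank(\AKh(b(\beta_n^{-1});\C))$ is \emph{exactly} $2n$ does not follow from Lemma~\ref{lem:akhboundmod2} or Lemma~\ref{lem:nontivialrep} as you cite them --- both of those only give \emph{lower} bounds on the annular rank, not upper bounds, so they cannot rule out extra $V_m$ summands for $b(\beta_n^{\pm1})$ itself. The identification of $\AKh(b(\beta_n^{\pm1});\C)$ with $V_n^0\oplus V_{n-2}^{\pm1}$ needs a direct computation (for instance an induction with the skein triangle in the style of Lemma~\ref{lem:t2ncomps}, or a citation to the known computation for these braid closures); the paper leaves this step implicit as well, so this is a shared omission rather than a flaw specific to your argument.
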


We note the $n=1$ case is uninteresting, as is the $n=2$ case since the annular Khovanov homology of all $2$-braids is known~\cite{grigsby_annular_2018}. Indeed, in the $2$-braid case, the proposition is false; $\rank(\AKh(b(\mathbbm{1}_2);\Z)=\rank(\AKh(b(\beta_2^{\pm 1});\Z))=4$~\cite{grigsby_annular_2018}.

\begin{proof}
    Suppose $\alpha$ is neither $\sigma$-positive nor $\sigma$-negative. Then $\alpha$ is the identity braid, and one can readily check that $\rank(\AKh(b(\alpha);\C))=2^n$. It follows that $n=1$ or $n=2$, contradicting our assumption.

    Suppose now that $\alpha$ is $\sigma$-negative. Then $\AKh(b(\alpha);\C)$ contains a ${V_{n}^0\{n+\slflk(\beta)\}\oplus V_{n-2}^{-1}\{\slflk(\beta)-2\}}$ summand by Lemma~\ref{lem:nontivialrep}, so must in fact be $V_{n}^0\{\slflk(\beta)\}\oplus V_{n-2}^{-1}\{\slflk(\beta)-2\}$
    The $\sigma$-positive case follows by a similar argument.
\end{proof}

Since annular Khovanov homology detects $\beta_1,\beta_2,\beta_3,\beta_4,\beta_5,\beta_6,\beta_8,\beta_{10}$~\cite{farber2022fixed},\cite{binns2020knot},\cite{binns2022rank} it follows from Proposition~\ref{prop:ranktohomology} that $\rank(\AKh(-;\Z/2))$ detects each of these braids amongst braids of the correct index. That is, we have the following corollary;

\begin{corollary}\label{rankclassification}
Suppose $\alpha$ is an $n$-braid with $n\in\{ 3,4,5,6,8,10\}$. If ${\rank(\AKh(b(\alpha);\Z/2))=2n}$ then $\alpha=\beta_n^{\pm 1}$.
\end{corollary}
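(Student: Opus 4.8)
The strategy is to combine Proposition~\ref{prop:ranktohomology} with known annular Khovanov homology detection results for the braids $\beta_n$. First I would observe that if $\alpha$ is an $n$-braid with $\rank(\AKh(b(\alpha);\Z/2)) = 2n$, then by the universal coefficient theorem $\rank(\AKh(b(\alpha);\C)) \leq \rank(\AKh(b(\alpha);\Z/2)) = 2n$. On the other hand, Theorem~\ref{thm:rankboundforarbbraid} gives $\rank(\AKh(b(\alpha);\C)) \geq 2n$ since $n \geq 3 \geq 2$. Hence $\rank(\AKh(b(\alpha);\C)) = 2n$ exactly, and moreover the $\Z/2$ and $\C$ ranks agree, so $\AKh(b(\alpha);\Z)$ is torsion-free (no $2$-torsion; and since these are the only primes that could inflate the $\Z/2$ rank relative to the $\Q$ rank here — more carefully, $\rank(\AKh(b(\alpha);\Z/2)) = \rank(\AKh(b(\alpha);\Q)) + (\text{contributions of } 2\text{-torsion})$, and equality of the $\Z/2$ and $\C = \Q\otimes\C$ ranks forces no $2$-torsion).

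Next I would apply Proposition~\ref{prop:ranktohomology}: since $n > 2$ and $\rank(\AKh(b(\alpha);\C)) = 2n$, we conclude $\AKh(b(\alpha);\C) \cong \AKh(b(\beta_n^{\pm 1});\C)$. Now for $n \in \{3,4,5,6,8,10\}$, the cited detection results (\cite{farber2022fixed}, \cite{binns2020knot}, \cite{binns2022rank}) state that annular Khovanov homology detects $\beta_n$ among all annular links — or at the very least among braid-closures of $n$-braids. Since $b(\alpha)$ has the same annular Khovanov homology as $b(\beta_n^{\pm 1})$ (here one should be slightly careful: detection statements are typically phrased over $\Z$ or $\Z/2$, so one wants $\AKh(b(\alpha);\Z) \cong \AKh(b(\beta_n^{\pm 1});\Z)$, which follows from the torsion-freeness established above together with the $\C$-isomorphism, since torsion-free finitely generated abelian groups are determined by their ranks in each grading), we conclude $b(\alpha)$ is isotopic as an annular link to $b(\beta_n^{\pm 1})$.

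Finally I would translate this annular-link isotopy back into a statement about braids. The braid-closure operation $b$ is injective on conjugacy classes when restricted to a fixed index — this is essentially Birman's theorem / the Markov theorem restricted to fixed braid index — but actually the detection results being invoked already give that $\alpha$ is conjugate to $\beta_n^{\pm 1}$, or one appeals directly to Birman--Menasco-type uniqueness for the relevant links. For the statement as written, $\alpha = \beta_n^{\pm 1}$ should be read up to conjugacy (as is standard for braid-closure statements); if literal equality is intended one simply notes that $\beta_n$ is already in a canonical form and the detection result pins down the conjugacy class, which for these specific small torus-link-type closures coincides with a single braid up to the evident symmetries. The main obstacle — and the point requiring the most care — is the bookkeeping around coefficient rings: ensuring that the hypothesis over $\Z/2$ genuinely upgrades to an isomorphism of integral (or at least rational) annular Khovanov homology groups so that the cited detection theorems apply. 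Once torsion-freeness is secured via the rank comparison between $\Z/2$ and $\C$ coefficients, the rest is a direct chain of citations.
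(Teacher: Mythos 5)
Your argument follows the paper's route exactly: the universal coefficient theorem gives $\rank(\AKh(b(\alpha);\C))\leq \rank(\AKh(b(\alpha);\Z/2))=2n$, Theorem~\ref{thm:rankboundforarbbraid} gives the reverse inequality, Proposition~\ref{prop:ranktohomology} then identifies $\AKh(b(\alpha);\C)$ with $\AKh(b(\beta_n^{\pm1});\C)$, and the cited detection results for $\beta_n$ with $n\in\{3,4,5,6,8,10\}$ finish the argument (with $\alpha=\beta_n^{\pm1}$ read up to conjugacy, since closed braids in the solid torus are isotopic exactly when the braids are conjugate). One caveat: your claim that $\AKh(b(\alpha);\Z)$ is torsion-free does not follow from what you prove --- equality of the $\Z/2$- and $\Q$-ranks excludes only $2$-torsion, so odd-order torsion could remain and you do not actually obtain an isomorphism of integral annular Khovanov homology groups this way. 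That extra step is not needed, however: the paper invokes the detection results at the level of $\C$ coefficients (exactly as it does again in the periodic-link proposition that follows the corollary), so the $\C$-coefficient isomorphism supplied by Proposition~\ref{prop:ranktohomology} is all that is used, and your proof is correct once that unnecessary upgrade is dropped.
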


This allows one to cut some casework from Baldwin-Hu-Sivek's proof that Khovanov homology detects $T(2,5)$ with $\Z/2$ coefficients~\cite[Sections 4 \& 5]{baldwin2021khovanov}. In fact{,} we can generalise part of Baldwin-Hu-Sivek's argument as follows:

\begin{proposition}
Suppose $K$ is an $m$-periodic link with axis of symmetry $A$, ${\rank(\Kh(K;\Z/m))\leq2n}$ and $\lk(A,K)\geq n$. Let $J$ denote the quotient of $K$ viewed as an annular link about $A$. Then ${\AKh(J;\C)\cong \AKh(b(\beta_n^{\pm 1});\C)}$. Moreover, if $n\in\{3,4,5,6,8,10\}$, then $J=b(\beta_n^{\pm 1})$.
\end{proposition}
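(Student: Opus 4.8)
The plan is to relate the annular Khovanov homology of the quotient link $J$ to the Khovanov homology of the periodic link $K$ via a localization spectral sequence, and then apply the rank bound from Theorem~\ref{thm:rankboundforarbbraid} together with Corollary~\ref{rankclassification}. First I would recall the key input from equivariant Khovanov homology used by Baldwin--Hu--Sivek: for an $m$-periodic link $K$ with quotient annular link $J$, there is a spectral sequence (coming from the $\Z/m$-equivariant structure, or the Borel construction) whose $E_1$-page is built from $\AKh(J)$ with coefficients in $\Z/m$ (or $\C$, via an appropriate version), converging to a group related to $\Kh(K;\Z/m)$. The upshot that I need is an inequality of the shape $\rank(\AKh(J;\C)) \leq \rank(\Kh(K;\Z/m))$, or more precisely that the annular-graded pieces of $\AKh(J)$ that survive are controlled by $\Kh(K;\Z/m)$. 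Combined with the hypothesis $\rank(\Kh(K;\Z/m)) \leq 2n$, this gives $\rank(\AKh(J;\C)) \leq 2n$.

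Next I would argue that $J$ is a braid closure. Since $K$ is an $m$-periodic link with axis $A$, the quotient $J$ is naturally a link in the solid torus complementary to (the image of) $A$, i.e. an annular link; and $\lk(A,K) \geq n$ forces the wrapping number of $J$ to be at least... actually I need $J$ to be the braid closure $b(\alpha)$ of an $n$-braid $\alpha$. The linking number hypothesis $\lk(A,K) \geq n$ together with $\rank(\Kh(K;\Z/m)) \leq 2n$ should pin down that $K$ meets a disk bounded by $A$ in exactly $n$ points up to the periodic symmetry, so that $J$ wraps $n$ times and is in fact a braid closure of an $n$-braid (one uses that a wrapping-number-$n$ annular link with sufficiently small annular Khovanov homology, via the rank bound applied through the transverse invariant, must be braided). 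Here I would lean on Theorem~\ref{thm:rankboundforarbbraid}: if $J = b(\alpha)$ for some $n$-braid $\alpha$ with $n \geq 2$, then $\rank(\AKh(b(\alpha);\C)) \geq 2n$, so combined with the upper bound we get exactly $\rank(\AKh(b(\alpha);\C)) = 2n$.

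Now Proposition~\ref{prop:ranktohomology} applies directly: an $n$-braid $\alpha$ with $n > 2$ and $\rank(\AKh(b(\alpha);\C)) = 2n$ satisfies $\AKh(b(\alpha);\C) \cong \AKh(b(\beta_n^{\pm 1});\C)$, which is exactly the first claim $\AKh(J;\C) \cong \AKh(b(\beta_n^{\pm 1});\C)$. For the ``moreover'' clause, when $n \in \{3,4,5,6,8,10\}$, Corollary~\ref{rankclassification} (or rather the detection results for $\beta_n$ feeding into it) upgrades the isomorphism of annular Khovanov homology to an identification of links: $\rank(\AKh(b(\alpha);\Z/2)) = 2n$ forces $\alpha = \beta_n^{\pm 1}$, hence $J = b(\beta_n^{\pm 1})$. (One should note the small $n$ cases $n \leq 2$ are excluded exactly as in Proposition~\ref{prop:ranktohomology}, and check that the coefficient ring hypothesis---$\Z/m$ for $\Kh(K)$ versus $\C$ or $\Z/2$ for $\AKh(J)$---is compatible with the localization spectral sequence, e.g. by working over $\F_p$ for a prime $p \mid m$ and invoking the universal coefficient theorem as in the earlier proofs.)

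The main obstacle I anticipate is setting up the localization/transfer spectral sequence correctly and extracting the clean rank inequality $\rank(\AKh(J;\C)) \leq \rank(\Kh(K;\Z/m))$ with the right coefficients and grading bookkeeping---this is the technical heart and is where one must be careful about whether the relevant equivariant theory is the Stoffregen--Zhang construction, the Borodzik--Politarczyk--Silvero approach, or the direct argument of Baldwin--Hu--Sivek, and about the precise role of $\lk(A,K) \geq n$ in ensuring the quotient is a genuine braid closure rather than merely an annular link of wrapping number $n$. Once that inequality and the braid-closure conclusion are in hand, the rest is a formal consequence of Theorem~\ref{thm:rankboundforarbbraid}, Proposition~\ref{prop:ranktohomology}, and Corollary~\ref{rankclassification}.
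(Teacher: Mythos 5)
Your overall architecture matches the paper's: Stoffregen--Zhang localization~\cite{stoffregen2018localization} gives $\rank(\AKh(J;\Z/m))\leq\rank(\Kh(K;\Z/m))\leq 2n$ (and hence the bound over $\C$ by universal coefficients), and once one knows $J$ is the closure of an $n$-braid, Theorem~\ref{thm:rankboundforarbbraid}, Proposition~\ref{prop:ranktohomology} and Corollary~\ref{rankclassification} finish the argument exactly as you say. The genuine gap is the middle step, which you yourself flag as the ``main obstacle'' but then attempt to dispatch with an invalid argument: you claim that $\lk(A,K)\geq n$ pins the wrapping number at $n$ and that ``a wrapping-number-$n$ annular link with sufficiently small annular Khovanov homology, via the rank bound applied through the transverse invariant, must be braided.'' This is circular: Theorem~\ref{thm:rankboundforarbbraid} and the transverse-invariant mechanism of Lemma~\ref{lem:nontivialrep} apply only after you know $J$ is a braid-closure (or clasp-closure); they cannot be used to establish braidedness, and small $\AKh$ plus a wrapping-number estimate does not by itself force $J$ to be braided. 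Moreover, nothing in your proposal produces a \emph{lower} bound on the top nontrivial annular grading of $\AKh(J;\C)$, which is what the braidedness argument actually hinges on.

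The paper fills this hole with instanton input: Xie's spectral sequence from $\AKh(J;\C)$ to annular instanton Floer homology~\cite{XieInstantonsandannularKhovanovhomology}, together with Xie--Zhang's theorem~\cite[Theorem 1.6]{xie_instanton_2019} that the top nontrivial annular grading of $\AHI(J)$ equals $\min\{2g(S)+|S\cap J|\}$ over meridional surfaces $S$, which is at least $\lk(A,J)\geq n$. Hence $\AKh(J;\C)$ is nontrivial in some annular grading $\geq n$, so as an $\slrep(\C)$-representation it contains a $V_k$ summand with $k\geq n$ (rank $k+1\geq n+1$); the bound $\rank\leq 2n$ then forbids a second such summand, so the top nontrivial annular grading has rank one, and Grigsby--Ni's detection theorem~\cite[Theorem 1.1]{grigsby_sutured_2014} shows $J$ is a braid-closure. (Only at that point do the rank bound and the linking-number hypothesis pin the braid index to exactly $n$, after which your concluding steps are correct.) Without the $\AHI$ detour --- or some substitute yielding both the lower bound on the annular support and the rank-one-at-the-top condition --- your proof does not go through.
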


We follow the argument of Baldwin-Hu-Sivek more or less verbatim.
\begin{proof}
Let $J$ denote the quotient of $K$, viewed as an {a}nnular link about $A$. A result of Stoffregen-Zhang~\cite{stoffregen2018localization} implies that \begin{align*}
\rank(\AKh(J;\Z/m))\leq \rank(\Kh(K;\Z/m)).\end{align*} Thus {$\rank(\AKh(J;\C))\leq\rank(\AKh(J;\Z/m))\leq2n$, with the first inequality coming from} the universal {coefficient} theorem. Xie defined a spectral sequence from annular Khovanov homology to an invariant called \emph{annular instanton Floer homology} which respects the annular grading~\cite{XieInstantonsandannularKhovanovhomology}. Xie-Zhang showed {in}~\cite[Theorem 1.6]{xie_instanton_2019} that the maximum non-trivial {a}nnular grading of {the} annular instanton Floer homology {of $J$} is given by $$\min\{2g(S)+|S\cap J|:S\text{ is a meridional surface}\}.$$ {Here, a meridional surface in the thickened annulus is any surface which meets the boundary of the thickened annulus in a meridian, i.e. a curve which bounds a disk.} Note that $\min\{2g(S)+|S\cap J|\}\geq\lk(A,J)\geq n$. It follows that $\rank(\AKh(J;\C,j=n))>0$. Therefore{,} $J$ is a braid by~\cite[Theorem 1.1]{grigsby_sutured_2014}, since $\AKh(J;\C)$ contains a copy of $V_{n+1}$ and so cannot have rank greater than one in the maximum annular grading. The result then follows directly from Proposition~\ref{prop:ranktohomology} in general and Corollary~\ref{rankclassification} in the special cases.
\end{proof}

\subsection{Braid-closures}\label{sec:akhbraids}

In this section we prove the following result:

\begin{theorem}\label{thm:AKH}
    Annular Khovanov homology with integer coefficients detects $b(\sigma_1\sigma_2^{n})$ for $-2\leq n\leq 5$.
\end{theorem}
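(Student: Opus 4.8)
The plan is to argue component-by-component in the style of Theorems~\ref{thm:HFLL6a2} and~\ref{thm:khl6a2}, using the $\slrep(\C)$-representation structure on $\AKh$ together with the rank bounds of Section~\ref{subsec:rankbound} and Birman--Menasco's classification of $3$-braids with fixed braid-closures~\cite{BirmanMenasco3braids}. First I would fix $n$ with $-2\leq n\leq 5$ and compute $\AKh(b(\sigma_1\sigma_2^n);\Z)$ explicitly --- these are all closures of $3$-braids, so the underlying links are small (they are either $T(2,m)$-type links, connected sums of Hopf links, or low-crossing links) and the computation is finite. Suppose $L$ is an annular link with $\AKh(L;\Z)\cong\AKh(b(\sigma_1\sigma_2^n);\Z)$. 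The first task is to show $L$ is a braid-closure: since $\AKh(b(\sigma_1\sigma_2^n);\C)$ has rank one in its top annular grading $k=3$ and is supported there in homological grading $0$, the wrapping number of $L$ is $3$, and by~\cite[Theorem 1.1]{grigsby_sutured_2014} $L=b(\beta)$ for some $3$-braid $\beta$ (the copy of $V_3$ forces rank one in the maximal annular grading).

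Next I would pin down $\beta$ up to the relevant ambiguity. The spectral sequence from $\AKh$ to $\Kh$ of the underlying link $b(\beta)$, together with the fact that $\Kh$ detects $T(2,m)$ for the small values of $m$ arising here (and more generally detects the isotopy type of these low-crossing links via existing detection results and finite checks), identifies the underlying link of $b(\beta)$ as $T(2,n+1)$-flavoured; the self-linking number is recovered from the quantum grading of Plamenevskaya's transverse class, which sits in homological grading $0$ and quantum grading $\slflk(\beta)$ in the $k=-3$ summand. This pins down the exponent sum and the underlying link type of $\beta$. By Birman--Menasco~\cite{BirmanMenasco3braids}, for $|m|\neq 1$ the only $3$-braids with closure $T(2,m)$ are $\sigma_1^{\pm1}\sigma_2^m$, and the three unknotted cases are $(\sigma_1\sigma_2)^{\pm1}$ and $\sigma_1\sigma_2^{-1}$; so after this step $\beta$ lies in an explicit finite list. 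The remaining step is to separate $b(\sigma_1\sigma_2^n)$ from the other candidates on that list using $\AKh$ itself: for the unknotted underlying-link cases (which occur for $n$ making $T(2,n+1)$ the unknot, and for the negative-$\sigma_1$ variants) I would compare the full triply-graded $\AKh(-;\Z)$, since $b(\sigma_1^{-1}\sigma_2^n)$ is the reverse/mirror-type partner and its annular Khovanov homology differs --- this is a finite check case by case, and here the rank bound Theorem~\ref{thm:rankboundforarbbraid} and Lemma~\ref{lem:nontivialrep} rule out the identity braid (whose $\AKh$ has rank $8$) whenever the candidate does not.

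The cleanest way to organise the separation is: establish that $L=b(\beta)$ with $\beta$ a $3$-braid, determine the exponent sum $e(\beta)$ from the quantum grading of the $k=-3$ generator, determine the underlying link type from the $\AKh\to\Kh$ spectral sequence plus known $\Kh$ detection, invoke Birman--Menasco to reduce to $\beta\in\{\sigma_1\sigma_2^n,\sigma_1^{-1}\sigma_2^{n'}\}$ (with $e$ and underlying link type matching), and finally distinguish $\sigma_1\sigma_2^n$ from the at most one or two alternatives by direct comparison of $\AKh$. The main obstacle I anticipate is the last separation step for the borderline values of $n$ where $T(2,n+1)$ is the unknot or a Hopf link (i.e.\ $n\in\{-2,-1,0,1\}$ roughly), because there $b(\sigma_1^{-1}\sigma_2^{n'})$ can have underlying link isotopic to that of $b(\sigma_1\sigma_2^n)$ and one must genuinely use the annular grading; checking that the triply-graded $\AKh$ actually differs --- rather than merely the bigraded $\Kh$ --- is where a real (if finite) computation is unavoidable, and I would lean on the $\slrep(\C)$-decomposition and the behaviour of the transverse class under these moves to make the distinction transparent.
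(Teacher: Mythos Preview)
Your outline matches the paper's: show $L=b(\beta)$ for a $3$-braid via Grigsby--Ni, identify the underlying link type, apply Birman--Menasco to get a finite list, and separate the candidates by direct comparison of $\AKh$. The computation of $\AKh(b(\sigma_1\sigma_2^n))$ is carried out in Lemma~\ref{lem:t2ncomps}, and the final separation step is exactly as you describe.

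The gap is in the middle step. You write that ``the spectral sequence from $\AKh$ to $\Kh$ \ldots\ identifies the underlying link of $b(\beta)$ as $T(2,n+1)$-flavoured'', but the naive rank bound $\rank(\Kh)\leq\rank(\AKh)$ coming from that spectral sequence is not sharp enough to invoke any Khovanov detection result. For instance when $n=5$ you only get $\rank(\Kh(b(\beta);\Z/2))\leq 14$, which says nothing. The paper's key technical input here is Lemma~\ref{lem:akhboundmod2}, which for a non-identity $3$-braid gives $\rank(\Kh(b(\beta);\Z/2))\leq\rank(\AKh(b(\beta);\Z/2))-4$; you cite Theorem~\ref{thm:rankboundforarbbraid} and Lemma~\ref{lem:nontivialrep} only to exclude the identity braid, but it is precisely their $\Z/2$ consequence (Lemma~\ref{lem:akhboundmod2}) that drives the whole argument. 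With that bound in hand the paper splits into cases and applies specific detection theorems: Kronheimer--Mrowka for the unknot, Baldwin--Sivek for the trefoil, Xie--Zhang's classification of two-component links with $\rank(\Kh;\Z/2)\leq 8$, and Baldwin--Hu--Sivek for $T(2,5)$. The $n=5$ case in particular needs more than a rank bound: one must first exclude three components, then argue the reduced rank is exactly $5$, and then track which ten generators survive the spectral sequence to pin down $\Kh(b(\beta);\Z/2)\cong\Kh(T(2,5);\Z/2)$ on the nose before Baldwin--Hu--Sivek applies.

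Your self-linking-number idea (reading the exponent sum off the quantum grading of the transverse class) is a reasonable extra constraint, but the paper does not use it and it does not substitute for the $\Kh$ rank bound: knowing the exponent sum of a $3$-braid does not determine its closure type, so you still need to identify the underlying link before Birman--Menasco can be invoked.
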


We remind the reader that the $n=1$ case was already proven in~\cite{binns2020knot}, so we do not discuss it here. We begin with some computations.

We compute the annular Khovanov homology of the annular links $b(\sigma_1\sigma_2^n)$. It is readily checked that \begin{equation}\label{eq:n=0case}
\AKh(b(\sigma_1);\C)\cong V_3^0\{1\}\oplus V_1^0\{1\}\oplus V_1^1\{3\}\end{equation} and that $\AKh(b(\sigma_1);\Z/2)$ can be obtained by replacing each homogeneous $\C$ summand in $\AKh(c(\sigma_1);\C)$ with a $\Z/2$ summand. We compute the annular Khovanov homology of the remaining links.
\begin{lemma}\label{lem:t2ncomps}
    For $n\geq 1${,} $\AKh(b(\sigma_1\sigma_2^n);\C)$ is given by;

    \begin{align*}
        V^0_3\{n+1\}\oplus V_1^1\{n+3\}\oplus\underset{1\leq i\leq n-1}{\bigoplus} V_1^{1+i}\{n+1+2i\}.
    \end{align*}

    For $n\leq -1${,} $\AKh(b(\sigma_1\sigma_2^n);\C)$ is given by;
  \begin{align*}
        V^0_3\{1+n\}\oplus V_1^1\{3+n\}\oplus V_1^0\{1+n\}\oplus \underset{-1\geq i\geq n}{\bigoplus} V_1^{i}\{n+1+2i\}.
    \end{align*}

    In each case $\AKh(b(\sigma_1\sigma_2^n);\Z/2)$ is given by replacing each homogeneous $\C$-summand with a $\Z/2$-summand.
\end{lemma}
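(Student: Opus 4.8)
The plan is to compute $\AKh(b(\sigma_1\sigma_2^n);\C)$ by induction on $|n|$, using the annular skein exact triangles \eqref{eq:akhskeinneg} and \eqref{eq:akhskeinpos} to resolve one of the $\sigma_2^{\pm1}$ crossings, together with the $\slrep(\C)$-representation structure on annular Khovanov homology and the rank bound of Lemma~\ref{lem:nontivialrep} to pin down which summands appear. First I would establish the base case: for $n=1$ one has the link $b(\sigma_1\sigma_2)=T(2,2)$ viewed annularly with the axis a third component, and the computation \eqref{eq:n=0case}-style direct calculation (or the known computation from~\cite{binns2020knot}) gives the claimed answer. For $n=-1$ one likewise computes directly, or notes $b(\sigma_1\sigma_2^{-1})$ is the annular presentation of $L6a2$ minus one component appropriately. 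In both cases the $\Z/2$ statement follows because the complex is thin enough that there is no room for torsion; more precisely, the Euler characteristic in each $(i,j,k)$-grading together with the known complex rank forces $\AKh(-;\Z/2)$ to have the same rank as $\AKh(-;\C)$, hence no torsion in $\AKh(-;\Z)$.

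For the inductive step, suppose the formula holds for $b(\sigma_1\sigma_2^n)$ with $n\geq 1$ and consider $b(\sigma_1\sigma_2^{n+1})$. Resolving one $\sigma_2$-crossing via \eqref{eq:akhskeinpos}: the oriented resolution $L_0$ is $b(\sigma_1\sigma_2^{n})$ and the other resolution $L_1$ is $b(\sigma_1\sigma_2^{n-1}\cdot(\text{a split circle in the annulus}))$, i.e. $b(\sigma_1\sigma_2^{n})$ with one strand turned into an unknot parallel to the braid axis — concretely $L_1$ is the annular connect-type union giving $\AKh(b(\sigma_1\sigma_2^{n-1});\C)\otimes V_1\{\text{shift}\}$ after accounting for the extra essential circle, or more carefully one tracks that $L_1 = b(\sigma_1)$ stabilized, whose homology is known from \eqref{eq:n=0case}. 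I would then argue the exact triangle splits: the grading shifts in \eqref{eq:akhskeinpos} are such that the images of $\AKh(L_0)$ and $\AKh(L_1)$ live in disjoint quantum-or-annular gradings, forcing the connecting map to vanish, so $\AKh(b(\sigma_1\sigma_2^{n+1});\C)$ is the direct sum of the appropriately shifted pieces; matching this against the representation-theoretic constraint (everything is a sum of $V_1$'s and one $V_3$ since the wrapping number is $3$) yields exactly the stated formula. The case $n\leq -1\to n-1$ is symmetric, using \eqref{eq:akhskeinneg} instead, and the extra $V_1^0\{1+n\}$ summand in the negative formula comes from the surviving $V_1^0$ of the $b(\sigma_1)$-type resolution.

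The main obstacle I anticipate is verifying that the skein exact triangle actually splits — i.e. that the connecting homomorphism vanishes — and identifying the two resolutions $L_0, L_1$ precisely enough (with all homological and quantum grading shifts) to see that their contributions do not overlap. A clean way to handle this is to note that the transverse/Plamenevskaya generator argument from Section~\ref{subsec:review} fixes the bottom annular grading piece exactly, and then the $\slrep(\C)$-module structure (each irreducible supported in a single homological grading, with annular $=$ quantum up to a global shift) leaves very little freedom: once the ranks in each homological grading are known from the exact sequence (which at worst gives an upper bound, with the lower bound supplied by Lemma~\ref{lem:nontivialrep}'s $V_3^0\oplus V_1^{-1}$ and its mirror), the decomposition into $V_n$'s is forced. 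Finally, the $\Z/2$ statement in each inductive step follows once the $\C$-rank is known, by the same Euler-characteristic-plus-rank argument as in the base case, since the Batson–Seed-type or direct count shows $\rank(\AKh(b(\sigma_1\sigma_2^n);\Z/2)) = \rank(\AKh(b(\sigma_1\sigma_2^n);\C))$, ruling out $\Z$-torsion.
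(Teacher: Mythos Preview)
Your overall strategy---induction on $|n|$ via the skein triangles~\eqref{eq:akhskeinpos} and~\eqref{eq:akhskeinneg}---is exactly the paper's approach. The essential gap is that you have misidentified the non-oriented resolution. When you resolve a $\sigma_2$ crossing in $b(\sigma_1\sigma_2^{n})$, the $0$-resolution is $b(\sigma_1\sigma_2^{n-1})$ as you say, but the $1$-resolution is not ``$b(\sigma_1\sigma_2^{n-1})$ with a split circle'' or ``$b(\sigma_1)$ stabilized'': after an annular isotopy it is simply $b(\mathbf{1}_1)$, the closure of the identity $1$-braid, whose annular Khovanov homology is $V_1^0$. (The cap--cup produced by the $1$-resolution, together with the remaining crossings and two of the closure arcs, simplifies to a single essential circle.) This single $V_1$ summand, shifted to $V_1^n\{3n-1\}$, is precisely the new term that appears in the formula for $n$ compared to that for $n-1$. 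Without this identification your inductive step cannot be carried out: the objects you propose for $L_1$ have the wrong ranks and gradings, and the decomposition you would obtain does not match the claimed formula. The same issue arises in the $n\leq -1$ case, where again the unoriented resolution of a $\sigma_2^{-1}$ crossing is $b(\mathbf{1}_1)$.

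Once $L_1$ is correctly identified, the argument is much simpler than you suggest. For $n\geq 3$ the connecting map $\delta$ in the triangle vanishes purely for grading reasons: by the inductive hypothesis no generator of $\AKh(b(\sigma_1\sigma_2^{n-1});\C)\{1\}$ lies in the $(i,j)$-grading of the shifted $V_1^0$, so there is nothing $\delta$ can hit. The case $n=2$ requires one additional observation (the paper uses that $\Kh(T(2,2);\C)$ has two generators in homological grading $2$), and the base cases $n=\pm 1$ are small direct computations. There is no need to invoke Lemma~\ref{lem:nontivialrep} or any lower bound on the rank, nor the $\slrep(\C)$-structure beyond bookkeeping. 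Likewise the $\Z/2$ statement follows immediately because every step of the argument---the identification of resolutions, the grading comparison forcing $\delta=0$, and the base cases---works verbatim over $\Z/2$; no separate Euler-characteristic or torsion argument is required. Your descriptions of the base-case links are also off: $b(\sigma_1\sigma_2)$ is an annular \emph{knot} (its underlying link type is the unknot), not $T(2,2)$, and $b(\sigma_1\sigma_2^{-1})$ is likewise an annular unknot, not ``$L6a2$ minus a component''.
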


\begin{proof}
Consider the standard diagram for $b(\sigma_1\sigma_2^n)$, as in Figure~\ref{fig:braidclosure}. We consider two cases; that in which $n\geq 1$ and that in which $n\leq -1$. We proceed by induction in both instances.

In the $n\geq 1$ case, note that {$\AKh(b(\sigma_1\sigma_2);\C)\cong V_3^0\{2\}\oplus V_1^{1}\{4\}$.}

For the inductive step we resolve $b(\sigma_1\sigma_2^n)$ at one of the crossings corresponding to a $\sigma_2$. Observe that the $1${-}resolution is the braid-closure of the identity $1$-braid, while the $0${-}resolution is $b(\sigma_1\sigma_2^{n-1})$. Applying the exact triangle~(\ref{eq:akhskeinpos}) and the fact that $n_-=0$, and $n_-^1=n-1$ we obtain:

\begin{center}
\begin{tikzcd}
\AKh(b(\sigma_1\sigma_2^{n});\C)\arrow[rr]&&\AKh(b(\sigma_1\sigma_2^{n-1});\C)\{1\}\arrow[dl,"\delta"]\\&\AKh(b(\mathbf{1}_1);\C)[n]\{3n-1\}\arrow[ul]

\end{tikzcd}
\end{center}

Now, \begin{equation}\label{eq:identity1}
\AKh(\mathbf{1}_1;\C)\cong V_1^0.
\end{equation} For $n\geq 2$ this maps splits by inductive hypothesis{, since there are no generators in $\AKh(b(\sigma_1\sigma_2^{n-1});\C)$ of the correct gradings to map non-trivially to $V_1^0[n]\{3n-1\}$}. For $n=2$ the result can be computed by hand or one can note that the connecting map $\delta$, which increases the $i$ grading by $1$, must vanish as $\Kh(b(\sigma_1\sigma_2^2);\C)\cong\Kh(T(2,2));\C)$ has two generators with $i$ grading $2$.

We now proceed to the {$n\leq -1$} case. Note that:\begin{align*}\AKh(b(\sigma_1\sigma_2^{-1});\C)\cong V_3^0\oplus V_1^{1}\{2\}\oplus V_1^0\oplus V_1^{-1}\{-2\}.\end{align*}

For the inductive step we resolve $b(\sigma_1\sigma_2^n)$ at one of the crossings corresponding to a $\sigma_2^{-1}$. Applying the exact triangle~(\ref{eq:akhskeinneg}) and noting that {$n_-=-n$}, and {$n_-^0=0$} we obtain:

\begin{center}
\begin{equation*}
\begin{tikzcd}
    \AKh(b(\sigma_1\sigma_2^n);\C)\arrow[rr]&&\AKh(b(\mathbf{1}_1);\C)[n]\{1+3n\}\arrow[dl]\\&\AKh(b(\sigma_1\sigma_2^{n-1});\C)\{-1\}\arrow[ul]
\end{tikzcd}
\end{equation*}\end{center}
 Given Equation~(\ref{eq:identity1}) and the inductive hypothesis, {the grading data implies that} the exact triangle{ must} split  {. The result follows.}
 
 Finally, to see that $\AKh(b(\sigma_1\sigma_2^n);\Z/2))$ is as claimed, observe that the proofs above from the case of complex coefficients carry through to the case of $\Z/2$ coefficients verbatim.\end{proof}

Note that for $\F\in\{\Z/2,\C\}$, $\AKh(b(\sigma_1^{-1}\sigma_2^n);\F)$ can be determined from Lemma~\ref{lem:t2ncomps} using symmetry properties of annular Khovanov homology. In particular, given that the $3$-braid representatives of the link $T(2,n)$ with $n\neq 0$ are exactly links of the form $b(\sigma_1^{-1}\sigma_2^n)$ and $b(\sigma_1\sigma_2^n)$ by Birman-Menasco's classification of $3$-braids~\cite{BirmanMenasco3braids}, we have computed the annular Khovanov homology of all $3$-braid representatives of $T(2,n)$.

We now proceed to prove Theorem~\ref{thm:AKH}. Our strategy is to use the spectral sequence from the annular Khovanov homology of an annular link to Khovanov homology of the underlying link to determine the underlying link type then to exploit Birman-Menasco's classification of $3$-braids~\cite{BirmanMenasco3braids}.

\begin{proof}[Proof of Theorem~\ref{thm:AKH}]

Suppose $L$ is an annular link with $\AKh(L;\Z)\cong\AKh(b(\sigma_1\sigma_2^{n});\Z)$ for some $n$. Note that $\AKh(L;R)\cong\AKh(b(\sigma_1\sigma_2^{n});R)$ for $R\in\{\Q,\C,\Z/2\}$ by the universal coefficient theorem. Since $L$ has rank one in the maximum non-trivial $k$ grading it follows that $L$ is isotopic to the closure of a braid $\beta$~\cite{grigsby_sutured_2014}. Since the maximum non-trivial $k$ grading is $3$ it follows that $\beta$ has index $3$. We now split our analysis into three cases; $n=-1$, $n=-2$ and $n\geq 0$.

${\mathbf{n=-1}}$.
\noindent We claim that $b(\beta)$ is an unknot. First note that since $\AKh(b(\beta);\C)$ has support in odd quantum gradings it follows that $b(\beta)$ has an odd number of components. Consider the spectral sequence from $\AKh(b(\beta);\Z/2)$ to $\Kh(b(\beta);\Z/2)$. Suppose $L$ is neither $\sigma$-positive nor $\sigma$-negative. Then $L$ is the identity $3$-braid. This is a contradiction{,} since the identity $3$-braid has annular Khovanov homology of rank $8$. It follows that $L$ is either $\sigma$-positive or $\sigma$-negative. Thus $\rank(\Kh(b(\beta);\Z/2))\leq 6$ by Lemma~\ref{lem:akhboundmod2}. It follows that $L$ has at most two components, since $\rank(\Kh(b(\beta);\Z/2))\geq 2^m$ where $m$ is the number of components of $L$. Thus{,} $\rank(\Khr(b(\beta);\Z/2))\leq 3$ by~\cite[Corollary 3.2.C]{shumakovitch2014torsion}. Thus we have that $L$ is either a trefoil or an unknot~\cite{baldwin2018khovanov,kronheimer2011khovanov}. $L$ cannot be a trefoil, since $\Kh(T(2,\pm 3);\Q)$ has support in quantum gradings $\pm 9$. It follows that $L$ is an unknot. Since there are only three $3$-braids representing the unknot up to conjugation by Murasugi's classification~\cite{Murasugiclosed3}, it suffices to show that $\beta$ is not  $\sigma_1\sigma_2$ or $\sigma_1^{-1}\sigma_2^{-1}$. But these two braids have braid-closures with annular Khovanov homology of rank $6$ over the complex numbers, rather than $10$, completing the proof in this case.

${\mathbf{n=-2}}$. First{,} note that since $\AKh(b(\beta);\C)$ has support in even quantum gradings $b(\beta)$ has an even number of components. Since $\beta$ is a $3$-braid it follows that $\beta$ has two components. Observe that $\rank(\Kh(b(\beta);\Z/2))\leq 8$ by Lemma~\ref{lem:akhboundmod2}, so by~\cite[Corollary1.4]{xie2020links} $b(\beta)$ represents
 a two{-}component unlink, $T(2,\pm 2)$ or $T(2,\pm 4)$. By Birman-Menasco's classification result~\cite{BirmanMenasco3braids}, $c(\beta)$ must be of the form $\sigma_1^{\pm1}\sigma_2^n$ for some even $|n|\leq 4$. Annular Khovanov homology distinguishes each of these links, concluding the proof in this case.

$\mathbf{n\geq 0}$.
\noindent Observe that $\beta$ cannot be the identity braid, since its annular Khovanov homology is not of the correct form. Moreover, $\beta$ cannot be $\sigma$-negative as there are no generators of $\AKh(L;\C)$ in homological grading $-1$. It follows that $\beta$ is $\sigma$-positive. An application of Lemma~\ref{lem:akhboundmod2} {and Lemma}~\ref{lem:t2ncomps} implies that {for $n\geq 1$} $$\rank(\Kh(b(\beta);\Z/2))\leq \rank(\Kh(b(\beta);\Z/2))\leq 2n.$$ {On the other hand, for $n=0$, Equation}~(\ref{eq:n=0case}) {and Lemma}~\ref{lem:akhboundmod2} {imply that }$$\rank(\Kh(b(\beta);\Z/2))\leq \rank(\Kh(b(\beta);\Z/2))\leq 4.$$ Thus {$\rank(\Khr(b(\beta);\Z/2))\leq \min\{n,2\}$} by~\cite[Corollary 3.2.C]{shumakovitch2014torsion}. We now treat our three subcases:

$\mathbf{n=3}$.
\noindent In this case{,} $\rank(\Khr(b(\beta);\Z/2))\leq 3$. Since $\AKh(b(\beta);\Q)$ is supported in odd quantum gradings it has an odd number of components. Note that $b(\beta)$ can have no more than two components, since $\rank(\Kh(b(\beta);\Q))\geq 2^m$ where $m$ is the number of components of $c(\beta)$.  It follows that $b(\beta)$ is a knot. Now, $\rank(\Khr(b(\beta);\Z/2)))$ is odd, so that $\rank(\Khr(b(\beta);\Z/2)))=1$ or $\rank(\Khr(b(\beta);\Z/2)))=3$. If $\rank(\Khr(b(\beta);\Z/2)))=1$ then $b(\beta)$ represents the unknot by~\cite{kronheimer2011khovanov}. But the three braid-closures of $3$-braids representing the unknot have different annular Khovanov homology from $\AKh(b(\beta);\Z)$, so $n\neq 1$. It follows that $n=3$ and $b(\beta)$ represents a trefoil by~\cite{baldwin2018khovanov}. There are four $3$-braids representing trefoils by~\cite{BirmanMenasco3braids}. They each have distinct annular Khovanov homology by Lemma~\ref{lem:t2ncomps}, so the result follows.

$\mathbf{n=5}$.
Since $\AKh(b(\beta);\C)$ is supported in odd quantum gradings{,} $b(\beta)$ has an odd number of components. Since $\beta$ is a $3$-braid, $b(\beta)$ has either one or three components. If $\beta$ has three components, then{ $\rank(\Kh(b(\beta);\C))\leq\rank(\AKh(b(\beta);\C))=14$, so} Batson-Seed's link splitting spectral sequence implies that each component of $b(\beta)$ is unknotted. {More specifically, observe that for any component $K$ of $b(\beta)$, we have that} $${2\rank(\Khr(K;\Z/2))\leq \rank(\Kh(K;\Z/2))\leq 3}$$ {by the universal coefficient theorem and}~\cite[Corollary 3.2.C]{shumakovitch2014torsion}{. Consequently,} $$\rank(\Khr(K;\Z/2))=1$$ {and $K$ is the unknot by}~\cite{kronheimer2011khovanov}. Birman-Menasco's classification theorem~\cite{BirmanMenasco3braids} implies that the only $3$-braid representative of the {three-component unlink} is the identity {$3$-}braid. However, the identity $3$-braid has distinct annular Khovanov homology from $\AKh(c(\beta);\C)$, so that $n\neq 3$. It follows that $b(\beta)$ is a knot. Now, if $ \rank(\Kh(b(\beta);\C))\leq 3$ we can proceed as in the $n=3$ case and deduce that $b(\beta)$ represents a trefoil or the unknot. This is a contradiction, since the {a}nnular Khovanov homology of the corresponding braid-closures are not of the correct form. It follows that $\rank(\Kh(b(\beta);\Q))=5$. {In turn it follows from the universal coefficient theorem and}~\cite[Corollary 3.2.C]{shumakovitch2014torsion}{ that $\rank(\Kh(b(\beta);\Z/2))\geq 10$. Consider} the spectral sequence from $\AKh(b(\beta);\Z/2))$ to $\Kh(b(\beta);\Z/2))${. The proof of Lemma}~\ref{lem:akhboundmod2} {and the symmetry properties of the spectral sequences under mirroring, implies that the spectral sequence kills the generators in $(i,j,k)$ gradings given by $(0,9,3),(1,9,1),(0,7,1),(1,7,-1)$. Since the $E_\infty$-page (i.e. $\Kh(b(\beta);\Z/2))$) must have rank ten, the remaining ten generators must survive. By examining their homological and quantum gradings we}  find that $\Kh(b(\beta);\Z/2))\cong\Kh(T(2,5);\Z/2))$. It follows that $b(\beta)$ is $T(2,5)$ by~\cite[Theorem 1.1]{baldwin2021khovanov}. Birman-Menasco's classification result,~\cite{BirmanMenasco3braids}, implies that up to conjugation $\beta=\sigma^{\pm1}\sigma_2^5$. $b(\sigma^{-1}\sigma_2^5)$ has the wrong annular Khovanov homology, so the result follows.

$\mathbf{n\in\{0,2,4\}}$. In this case $b(\beta)$ has an even number of components since $\AKh(b(\beta);\C)$ is supported in even quantum gradings. Since $b(\beta)$ is a $3$-braid it has exactly two components. Now, $\rank(\Kh(b(\beta);\Z/2))\leq 8$, so by~\cite[Corollary1.4]{xie2020links} $b(\beta)$ is either
 a two{-}component unlink, $T(2,\pm 2)$ or $T(2,\pm 4)$. By Birman-Menasco's classification result~\cite{BirmanMenasco3braids}, $c(\beta)$ must be of the form $\sigma_1^{\pm1}\sigma_2^n$ for some even $|n|\leq 4$. Annular Khovanov homology distinguishes these links, concluding the proof.
\end{proof}
\subsection{Clasp-closures}\label{sec:akhclasps}

We now study the annular Khovanov homology of clasp-closures of $3$-braids. The results are dependent on the rank bound from Section~\ref{subsec:rankbound}. Our main result is the following:

\begin{theorem}\label{thm:AKHmazur}
    Annular Khovanov homology with integer coefficients detects the Mazur pattern.
\end{theorem}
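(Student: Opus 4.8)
The plan is to mirror the strategy used for the braid-closure detection results in this paper: first constrain the underlying link type of any annular link $L$ with $\AKh(L;\Z)\cong\AKh(\mathring{c}(\sigma_1^{-1});\Z)$ (the Mazur pattern being $c(\sigma_1^{-1})$, the clasp-closure of the $1$-braid --- or rather of a $2$-braid in the conventions here, since one cannot clasp-close a $1$-braid; I would begin by pinning down exactly which braid presentation is relevant and compute its annular Khovanov homology explicitly, as was done for braid-closures in Lemma~\ref{lem:t2ncomps}). Having the explicit groups in hand, I would read off the maximal non-trivial annular grading $k$; since the Mazur pattern has wrapping number $2$, this maximal grading is $2$, and $\AKh(L;\C)$ has rank one there. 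By~\cite{grigsby_sutured_2014} it follows that $L$ is isotopic to the clasp-closure of a braid, and since the maximal annular grading is $2$ the braid has index... wait, here the clasp-closure of an $n$-braid has wrapping number $n$, so one must be careful: a wrapping-number-$2$ annular link that is a clasp-closure is the clasp-closure of a $2$-braid. So the first main step is: \emph{$L$ is the clasp-closure of a $2$-braid, i.e. a twisted Whitehead pattern.}

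The second step is to cut down to finitely many $2$-braids. Here Theorem~\ref{thm:rankboundforarbbraid}(2), or more precisely Lemma~\ref{lem:akhboundmod2}, gives $\rank(\Kh(c(\beta);\Z/2))\le \rank(\AKh(c(\beta);\Z/2))-4(n-1)$ for the relevant $2$-braid $\beta$ (so $n=2$, giving a bound of $\rank(\AKh)-4$). Combined with the explicit computation of $\rank(\AKh(\mathring{c}(\sigma_1^{-1});\Z/2))$ from the first step, this bounds $\rank(\Kh(c(\beta);\Z/2))$, hence $\rank(\Khr(c(\beta);\Z/2))$ via~\cite[Corollary 3.2.C]{shumakovitch2014torsion}, and I would use the parity of the quantum grading (which detects the number of components) together with Batson--Seed's link-splitting spectral sequence~\cite{batson2015link} and the Lee spectral sequence~\cite{lee2005endomorphism} (to pin down the linking number) to identify the underlying link $c(\beta)$ among a short list of small links. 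Then I would invoke the classification of clasp-closures of $2$-braids among annular knots~\cite[Theorem 8.1]{binns2024floer} --- twisted Whitehead patterns --- to conclude that $L$ is $c(\sigma_1^m)$ for $m$ in some finite range, and finally distinguish these finitely many twisted Whitehead patterns by direct computation of their annular Khovanov homology (e.g. via the skein exact triangles~(\ref{eq:akhskeinneg}) and~(\ref{eq:akhskeinpos}) and an induction on $m$, just as in Lemma~\ref{lem:t2ncomps}), leaving only the Mazur pattern.

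The main obstacle I anticipate is the step that identifies the underlying link and rules out twisted Whitehead patterns other than the Mazur one: the rank bound of Lemma~\ref{lem:akhboundmod2} only saves $4$ from the total, which may not by itself be enough to force the underlying link to be the right one, so I expect to need the finer, grading-by-grading information in the spectral sequence from $\AKh$ to $\Kh$ --- specifically which generators must die under the $\partial_{-2}^*$ differential, exactly as exploited in the $n=5$ case of the proof of Theorem~\ref{thm:AKH} --- together with the $\delta$-thinness/span considerations and the Khovanov-homology detection results for small links (the unknot~\cite{kronheimer2011khovanov}, trefoils~\cite{baldwin2018khovanov}, Hopf links, and the $T(2,n)$ family~\cite{xie2020links}). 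Organizing this casework cleanly, and making sure the annular Khovanov homology genuinely distinguishes all the surviving twisted Whitehead patterns from the Mazur pattern, is where the real work lies; the topological-recognition input (wrapping number, clasp-closure structure,~\cite[Theorem 8.1]{binns2024floer}) should be comparatively routine given what is already set up in the paper.
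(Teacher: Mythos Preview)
There is a genuine gap. The Mazur pattern in this paper is the clasp-closure of the \emph{$3$-braid} $\sigma_1^{-1}$ (see the list in the introduction and Lemma~\ref{lem:comp1}); its wrapping number is $3$, not $2$, and $\AKh(c(\sigma_1^{-1});\C)$ has rank \emph{two} in the maximal annular grading $k=3$, not rank one. So both the braid index and the rank in the top annular grading in your first step are wrong, and consequently the entire reduction to twisted Whitehead patterns (clasp-closures of $2$-braids) is aimed at the wrong target. Moreover, the result of Grigsby--Ni you cite detects \emph{braid}-closures (rank one in top annular grading), not clasp-closures; it cannot be used to conclude that $L$ is a clasp-closure of anything.

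The paper's route is structurally different at exactly this point. The recognition step is Lemma~\ref{lem:AKHdetectsclasps}: one passes through Xie's spectral sequence from $\AKh$ to annular instanton Floer homology $\AHI$, uses the Xie--Zhang interpretation of the maximal $\AHI$-grading in terms of meridional surfaces to rule out wrapping number one, and then invokes \cite[Proposition~8.6]{binns2024floer} (rank two in the top $\AHI$-grading $3$) to conclude that $L$ is a clasp-closure of a $3$-braid. From there the rank bound of Lemma~\ref{lem:akhboundmod2} forces the underlying knot to be an unknot or trefoil, the quantum-grading support rules out the trefoil, and then Baldwin--Sivek's classification of $3$-braids with unknotted clasp-closures together with the distinguishing computations of Lemmas~\ref{lem:comp1}--\ref{lem:comp3} and Lemma~\ref{lem:notwosame} finishes the argument. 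Your overall architecture (constrain underlying link, then classify, then distinguish by computation) is right in spirit, but the specific recognition mechanism for clasp-closures and the correct target class ($3$-braids, not $2$-braids) are the missing ideas.
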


To prove this we will use the following Lemma.
\begin{lemma}\label{lem:AKHdetectsclasps}
    Suppose $K$ is a clasp-closure of a $3$-braid. If $K'$ is an annular knot with ${\AKh(K;\C)\cong\AKh(K';\C)}$ then $K'$ is also a clasp-closure of a $3$-braid.
\end{lemma}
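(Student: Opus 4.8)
\textbf{Proof proposal for Lemma~\ref{lem:AKHdetectsclasps}.}

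The plan is to mirror the strategy used for link Floer homology in Lemma~\ref{lem:3clasprepunknot}, but now working with annular Khovanov homology and the structural tools available in that setting. The key invariant-theoretic inputs are: (i) the wrapping-number bound, which says $\AKh(K';\C)$ can contain $V_m$ summands only for $m \leq w(K')$; (ii) the fact that $\AKh$ of a braid-closure has rank one in its top annular grading while a clasp-closure of an $n$-braid has wrapping number $n$ and rank at least four there (from the explicit description of $\CAKh(c(\beta),k=-n)$ in the proof of Lemma~\ref{lem:chainlevel}, which has four generators with nontrivial homology); and (iii) the rank bound $\rank(\AKh(c(\beta);\C)) \geq 4n$ from Theorem~\ref{thm:rankboundforarbbraid}, together with its refined form Lemma~\ref{lem:nontivialrep}. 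First I would observe that $K$, being a clasp-closure of a $3$-braid, has wrapping number $3$, so $\AKh(K;\C)$ is supported in annular gradings $|k|\leq 3$ and has rank exactly one in annular grading $k=-3$ (the Plamenevskaya-type class for the clasp-closure, analogous to the braid-closure case but with the clasp contributing; more precisely the top annular grading of a clasp-closure has rank one since the all-$X$ labelling is forced). Hence any $K'$ with the same $\AKh$ also has $w(K')\leq 3$ and rank one in its top nonzero annular grading.

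Next I would use the geometric characterization of which annular knots have rank one in the maximal annular grading. By Grigsby--Ni's characterization~\cite{grigsby_sutured_2014} (or rather the sutured-Floer analogue invoked in Section~\ref{sec:applications}), an annular link with rank one in its top annular grading $k=w$ is a braid closure — but here $K$ is \emph{not} a braid closure (it is a clasp-closure, which has a genuine clasp), so its top nonzero annular grading must in fact be strictly less than $w(K')$ in a way that is detected by $\AKh$: the point is that $\AKh(K;\C)$ is supported in $k \leq 3$ but has rank exceeding one in $k=3$? No — I need to be careful. Let me instead argue as follows: compute $\AKh(K;\C)$ in the top annular grading $k=3$ directly from the canonical clasp-closure diagram. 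The complex $\CAKh(c(\beta),k=-3;R)$ has exactly the four generators $\mathbf X_1,\mathbf X_2,\mathbf X_3,\mathbf 1$ with $\partial_0\mathbf X_2=\partial_0\mathbf X_3=\mathbf 1$, so its homology is rank two; symmetrically $k=-3$ and (after mirroring/orientation) the relevant extreme grading has rank two. So $\AKh(K;\C)$ has rank exactly two in one extreme annular grading, which by~\cite[Theorem 1.1]{grigsby_sutured_2014} forces $K'$ to \emph{not} be a braid closure but to be a closure with wrapping number $3$ of a slightly more complicated form. I would then combine this with the rank and homological-grading data: from Lemma~\ref{lem:nontivialrep} applied to both $\sigma$-positive and $\sigma$-negative representatives (as in the two propositions following it), together with the $\slrep(\wedge)$-action constraints, one pins down that the annular closure operation producing $K'$ must be a clasp-closure rather than, say, adding a more complicated tangle.

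The main obstacle will be step two: there is no clean published classification of ``annular knots of wrapping number $3$ with rank two in the top annular grading'' the way braid-closure detection (rank one $\Rightarrow$ braid closure) is clean. I expect to need to argue more hands-on, using the $\slrep(\C)$-representation structure on $\AKh(K';\C)$ to show it contains a $V_3$ summand (forcing $w(K')=3$ exactly) plus a $V_1$ summand in the same homological grading accounting for the rank-two top grading, then invoking the sutured-Floer decomposition along a meridional disk to reduce $K'$ to a tangle in $D^2\times I$ with $6$ endpoints that is forced to be ``braid plus clasp''. The honest version may require importing from~\cite{binns2024floer} or~\cite{grigsby_sutured_2014} a statement that annular knots with $\SFH$ or $\AKh$ of this particular shape in the extreme grading are exactly clasp-closures — essentially an annular-Khovanov analogue of Lemma~\ref{lem:maxgradingmasloov} and~\cite[Theorem 5.1]{binns2024floer}. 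If such a statement is not directly available I would prove the needed special case by the same doubling-diagram technique used in Lemma~\ref{lem:maxgradingmasloov}, transported to the Khovanov/Bar-Natan tangle setting.
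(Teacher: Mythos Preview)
Your proposal has a genuine gap, and you correctly identify it yourself: there is no direct ``rank two in the top annular grading $\Rightarrow$ clasp-closure'' theorem for annular Khovanov homology, and your suggestions for filling this hole (importing an $\SFH$/$\AKh$ analogue of~\cite[Theorem 5.1]{binns2024floer}, or redoing the doubling-diagram argument in a Bar-Natan tangle setting) are speculative programs rather than arguments.

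The paper sidesteps this entirely by changing the target theory. It applies Xie's spectral sequence from $\AKh(K';\C)$ to annular instanton Floer homology $\AHI(K';\C)$~\cite{XieInstantonsandannularKhovanovhomology}. The point is that in the instanton setting the needed classification \emph{does} exist: Xie--Zhang's result~\cite[Theorem 1.6]{xie_instanton_2019} identifies the top nontrivial annular grading of $\AHI$ with $\min\{2g(S)+|S\cap K'|\}$ over meridional surfaces, which rules out wrapping number one (else the top $\AKh$ grading would be $1$, not $3$); and then~\cite[Proposition 8.6]{binns2024floer} says that an annular knot with $\AHI$ of rank two in top annular grading $3$ is a clasp-closure of a $3$-braid. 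Since the spectral sequence respects the annular grading and $\AKh(K';\C)$ has rank two in $k=3$, so does $\AHI(K';\C)$, and the conclusion follows.

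So the missing idea is simply: pass to $\AHI$ via Xie's spectral sequence, where the rank-two clasp-closure detection result is already available. Your attempt to stay inside annular Khovanov homology is natural but, as you suspected, does not currently go through without proving new structural results.
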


\begin{proof}

Suppose $K'$ is as in the statement of the Lemma. Consider Xie's spectral sequence from $\AKh(K';\C)$ to $\AHI(K';\C)$~\cite{XieInstantonsandannularKhovanovhomology}. Observe that the maximum non-trivial annular grading of $\AHI(K';\C)$ is either $3$ or $1$. By~\cite[Theorem 1.6]{xie_instanton_2019}, {if the maximum non-trivial grading} is one then there is a meridional surface of Euler characteristic zero, i.e. $K'$ is a wrapping number one annular link. Such links have annular Khovanov homology with maximal non-trivial annular grading one {--- which can be seen by viewing $K'$ as a connect sum of the $1$-braid and a wrapping number zero link and applying the K\"unneth formula for annular Khovanov homology ---} a contradiction.

It follows that $\AHI(K';\C)$ is of rank $2$ in annular grading $3$, the maximum annular grading in which $\AHI(K';\C)$ is non-trivial. It follows from~\cite[Proposition 8.6]{binns2024floer} that $K'$ is a clasp-braid-closure of index $3$.
\end{proof}

To prove Theorem~\ref{thm:AKHmazur} it remains to show that annular Khovanov homology distinguishes the Mazur pattern from the other clasp-closures of $3$-braids representing unknots. To that end{,} we give a partial computation for the annular Khovanov homology of the three types of clasp-closures representing unknots.

First{,} we consider the mirror of the Mazur pattern, $c(\sigma_1^{-1})$. 
\begin{lemma}\label{lem:comp1}
    $\AKh(c(\sigma^{-1});\C)$ is given by:

  \begin{center}

    \begin{tabular}{|c|c|c|c|c|}
    \hline
    \backslashbox{\!$i$\!}{\!$k$\!}
     & $-3$&$-1$&$1$&$3$ \\
\hline
$0$&$\C_{-5}$&$\C_{-3}^2$&$\C_{-1}^2$&$\C_{1}$\\\hline
$-1$&$\C_{-7}$&$\C_{-5}^2$&$\C_{-3}^2$&$\C_{-1}$\\\hline
$-2$& &$\C_{-7}$&$\C_{-5}$& \\\hline
  \end{tabular} 
\end{center}
\end{lemma}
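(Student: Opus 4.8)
The plan is to compute $\AKh(c(\sigma_1^{-1});\C)$ by a direct skein-theoretic argument, building on the computations already carried out for braid-closures in Lemma~\ref{lem:t2ncomps}. First I would fix the standard diagram $D$ for $c(\sigma_1^{-1})$ as in Figure~\ref{fig:claspclosure}: this is a $3$-braid closure of $\sigma_1^{-1}$ with the clasp inserted between the two rightmost strands. The clasp contributes two extra crossings, and $\sigma_1^{-1}$ one more, so the diagram has a small number of crossings to resolve. My strategy is to pick one of the two clasp crossings and apply one of the annular skein exact triangles~\eqref{eq:akhskeinneg} or~\eqref{eq:akhskeinpos}, depending on the sign. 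One of the two resolutions collapses the clasp: resolving the clasp crossing the ``easy'' way turns $c(\sigma_1^{-1})$ into a braid-closure (namely $b(\sigma_1^{-1})$ together with possibly an extra unknotted component or a connect-sum factor), whose annular Khovanov homology is known from Equation~\eqref{eq:n=0case} (after applying the symmetry properties under mirroring) or from Lemma~\ref{lem:t2ncomps}. The other resolution introduces a further crossing reduction; iterating the skein triangle on the remaining clasp/braid crossings reduces everything to annular Khovanov homologies of braid-closures of at most $3$-braids, all of which are computed in Lemma~\ref{lem:t2ncomps} (or computable by hand — e.g. identity braids via the K\"unneth formula).

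Next I would assemble these pieces. Each skein triangle is a long exact sequence in $\AKh$; to deduce that it splits I would compare gradings exactly as in the proof of Lemma~\ref{lem:t2ncomps}: the connecting map $\delta$ raises the homological grading $i$ by one while the terms it connects are supported, by the inductive/known computations, in homological gradings that make $\delta$ necessarily zero on grading grounds. Where a grading clash genuinely occurs, I would fall back on the spectral sequence from $\AKh$ to $\Kh$ of the underlying link: since $c(\sigma_1^{-1})$ is the Mazur pattern (mirror), its underlying link is the unknot, so $\Kh(c(\sigma_1^{-1});\C)$ has rank $2$, and this constrains how much cancellation the $\partial_{-2}$-differential must produce. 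Combined with the fact that $\AKh(-;\C)$ is an $\slrep(\C)$-representation — so it splits into $V_n^i$ summands with $n\le w(c(\sigma_1^{-1}))=3$ — and with Lemma~\ref{lem:nontivialrep} (which, applied to the $\sigma$-negative braid $\sigma_1^{-1}$, forces a $V_3^{a_1}\{a_2\}\oplus V_1^{a_1-1}\{a_2-2\}$-type summand together with its mirror partner), the possible shapes of the answer are pinned down to the table claimed. Finally I would record the $\Z/2$ statement implicitly contained in the surrounding discussion by noting, as elsewhere in the paper, that the complex-coefficient computation goes through verbatim over $\Z/2$, so there is no torsion to worry about.

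The main obstacle I anticipate is not the topology but the bookkeeping of the three gradings $(i,j,k)$ through the skein triangles: the quantum shifts $\{3n_-^0-3n_-+1\}$ and $\{3n_-^1-3n_-+2\}$ and homological shifts $[n_-^0-n_-]$, $[n_-^1-n_-+1]$ depend on how many negative crossings each resolution has, and the clasp resolutions change the negative-crossing count in a way that must be tracked carefully to land on exactly the $\C_{-5},\C_{-3}^2,\dots$ pattern in the table. A secondary subtlety is ensuring each skein triangle really splits: in the one or two borderline gradings where the homological degrees could in principle permit a nonzero $\delta$, I will need the auxiliary input (rank of $\Kh$ of the unknot via Dowlin-type reasoning, or the $\slrep(\C)$-module structure, or the annular grading bound $|k|\le w=3$) to rule it out, and verifying that these inputs are simultaneously consistent with the claimed answer is the step most likely to require genuine care rather than routine calculation.
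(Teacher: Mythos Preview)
Your approach is essentially the paper's: resolve a clasp crossing and apply the skein triangle~(\ref{eq:akhskeinneg}). The execution is cleaner than you anticipate, however. Resolving the top clasp crossing in the standard diagram, the $0$-resolution is exactly $b(\sigma_1^{-1}\sigma_2^{-1})$ and the $1$-resolution is exactly $b(\sigma_1^{-1})$ --- no extra components, no connect-sum factors, and no iteration required. Both of these braid-closures have annular Khovanov homology that is computed directly by hand (small tables), and after inserting the shifts $[-1]\{-2\}$ and $\{-1\}$ dictated by $n_-=3$, $n_-^0=2$, a comparison of quantum gradings alone shows the connecting map $\delta$ vanishes. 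None of the auxiliary inputs you list --- the spectral sequence to $\Kh$ of the unknot, the $\slrep(\C)$-module structure, or Lemma~\ref{lem:nontivialrep} --- are needed; the grading bookkeeping you flag as the main obstacle is in fact the entire proof, and it goes through verbatim over $\Z/2$.
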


$\AKh(c(\sigma^{-1});\Z/2)$ can be obtained by replacing every homogeneous $\C$-summand and replacing it with a $\Z/2$-summand.

\begin{proof}

Consider the $0$ and $1${-}resolutions of the crossing at the top of the diagram shown in Figure~\ref{fig:claspclosure}. The $0${-}resolution yields the $b(\sigma_1^{-1}\sigma_2^{-1})$ while the $1${-}resolution is $b(\sigma_1^{-1})$. Recall that $\AKh(b(\sigma_1^{-1}\sigma_2^{-1});\C)$ is given by:

  \begin{equation}   
\centering
\begin{tabular}{|c|c|c|c|c|}
    \hline
    \backslashbox{\!$i$\!}{\!$k$\!}
     & $-3$&$-1$&$1$&$3$ \\
\hline
$0$&$\C_{-5}$&$\C_{-3}$&$\C_{-1}$&$\C_{1}$\\\hline
$-1$& &$\C_{-5}$&$\C_{-3}$& \\\hline
  \end{tabular} \label{tab:lem315a}
  \end{equation}

This can be computed by hand. On the other hand $\AKh(b(\sigma_1^{-1});\C)$ is given by:

 \begin{equation}   
\centering
    \begin{tabular}{|c|c|c|c|c|}
    \hline
    \backslashbox{\!$i$\!}{\!$k$\!}
     & $-3$&$-1$&$1$&$3$ \\
\hline
$0$&$\C_{-4}$&$\C_{-2}^2$&$\C_{0}^2$&$\C_{2}$\\\hline
$-1$& &$\C_{-4}$&$\C_{-2}$& \\\hline
  \end{tabular} \label{tab:lem315b}
\end{equation}

Now observe that $n^-=3$, $n_0^-=2$, and we have that the exact triangle~(\ref{eq:akhskeinneg}) reduces to:

\begin{center}
\begin{tikzcd}
    \AKh(c(\sigma^{-1});\C)\arrow[rr]&&\AKh(b(\sigma_1^{-1}\sigma_2^{-1});\C)[-1]\{-2\}\arrow[dl,"\delta"]\\&\AKh(b(\sigma_1^{-1});\C)\{-1\}\arrow[ul]
\end{tikzcd}
\end{center}

{Comparing the gradings listed in table}~\ref{tab:lem315a} {and Table}~\ref{tab:lem315b}{ and noting that the connecting homomorphism $\delta$ preserves the quantum grading, we see that t}he lower right hand map in this triangle vanishes, yielding the desired result for $\AKh(c(\sigma_1^{-1});\C)$. The computation for  $\AKh(c(\sigma_1^{-1});\Z/2)$ is identical.
\end{proof}

We now give a partial computation for $\AKh(b(\sigma_1^{-3}\sigma_2\sigma_1^{-2});\C)$.

\begin{lemma}\label{lem:comp2}   $b(\sigma_1^{-3}\sigma_2\sigma_1^{-2})$ has annular Jones polynomial given by:
\begin{align*}
    t^{-3}(-q+q^{-1})+t^{-1}(-q^3+2q)+t(-q^5+2q^3)+t^3(-q^7+q^5).
\end{align*}
Moreover, in annular grading $3$ the annular Khovanov homology is rank two and {is} supported in $(i,j)$ gradings $(3,7)$ and $(2,5)$.
\end{lemma}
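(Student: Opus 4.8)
The plan is to compute $\AKh(b(\sigma_1^{-3}\sigma_2\sigma_1^{-2});\C)$ by the same skein-triangle bootstrapping used in the proof of Lemma~\ref{lem:comp1}, resolving the $\sigma_2$ crossing (the unique positive crossing after a suitable diagrammatic normalization). First I would fix the standard diagram $D$ for $c(\sigma_1^{-1})$; after an isotopy/stabilization argument this presents $b(\sigma_1^{-3}\sigma_2\sigma_1^{-2})$ as the $0$-resolution at a distinguished crossing (note $\sigma_1^{-3}\sigma_2\sigma_1^{-2}$ is conjugate in $B_3$ to $\sigma_1^{-5}\sigma_2$ up to the relations, but it is cleaner to work directly with the given word). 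Resolving at the $\sigma_2$ crossing: the $0$-resolution of $b(\sigma_1^{-3}\sigma_2\sigma_1^{-2})$ is $b(\sigma_1^{-5})$ --- a $2$-strand link with a $\sigma$-negative $1$-braid factor plus a disjoint circle, whose annular Khovanov homology is computed by Lemma~\ref{lem:t2ncomps} (or directly, as in~(\ref{eq:identity1})), while the $1$-resolution is $b(\sigma_1^{-3}\sigma_1^{-2}) = b(\sigma_1^{-5})$ again, viewed as a genuine $2$-braid closure $T(2,-5)$-type annular link, whose annular Khovanov homology is also given by Lemma~\ref{lem:t2ncomps}. (I would double check which of the two resolutions is the annular-split one; the point is that both inputs are known.)

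Next I would assemble the exact triangle~(\ref{eq:akhskeinneg}) with the correct grading shifts: count $n_-$ for the diagram of $b(\sigma_1^{-3}\sigma_2\sigma_1^{-2})$ and $n_-^0$ for the $0$-resolution, plug into the shifts $[n_-^0-n_-]\{3n_-^0-3n_-+1\}$ and $\{-1\}$, and read off the homological and quantum gradings of the two inputs. The annular Jones polynomial (the graded Euler characteristic in the variables $t$ for the annular grading and $q$ for the quantum grading) is then forced by additivity across the triangle --- it is simply the signed sum of the two input annular Jones polynomials with the appropriate shifts, independent of whether the connecting map $\delta$ vanishes. This yields the claimed formula $t^{-3}(-q+q^{-1})+t^{-1}(-q^3+2q)+t(-q^5+2q^3)+t^3(-q^7+q^5)$ directly. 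For the second assertion --- rank two in annular grading $k=3$, concentrated in $(i,j)$-bigradings $(3,7)$ and $(2,5)$ --- I would argue as follows: in the top annular grading $k=3$ each input contributes a single $\C$ summand (the wrapping-number-$3$ extremal piece, a $V_3$ top generator), these sit in different homological gradings after the shift, so $\delta$ cannot be an isomorphism between them; hence both survive into $\AKh(c(\beta))$ in grading $k=3$, giving rank exactly two there with the bigradings computed from the shifts. One checks the two surviving generators land in $(i,j)=(3,7)$ and $(2,5)$ by tracking the shifts explicitly; the Euler characteristic in $k=3$, namely $-q^7+q^5$, is consistent with this (the two classes have opposite homological parity), confirming no further cancellation is possible.

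The main obstacle I anticipate is bookkeeping: correctly identifying the two resolutions (in particular which one is $\sigma$-negative in the sense needed for Lemma~\ref{lem:t2ncomps}, and which carries the extra disjoint unknotted circle versus which is a $2$-braid closure), and getting all the $n_-$, $n_-^0$ counts and the resulting grading shifts exactly right so that the claimed $(i,j)$-bigradings in the top annular grading come out as $(3,7)$ and $(2,5)$ rather than some shift thereof. A secondary subtlety is justifying that $\AKh(c(\sigma_1^{-1});\C)$ with the orientation/framing conventions of Figure~\ref{fig:claspclosure} genuinely presents $b(\sigma_1^{-3}\sigma_2\sigma_1^{-2})$ --- this is a diagrammatic isotopy check (Baldwin--Sivek's identification, cited in the introduction, that $\sigma_1^{-1}$ has clasp-closure the Mazur link), after which the homological input for the triangle is exactly Lemma~\ref{lem:comp1}. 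Everything else --- the vanishing of the relevant component of $\delta$ in the top annular grading, and the Euler characteristic computation --- is routine once the shifts are pinned down.
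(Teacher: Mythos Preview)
Your proposal rests on a misreading that the paper's typo invites: the object in the lemma is the \emph{clasp}-closure $c(\sigma_1^{-3}\sigma_2\sigma_1^{-2})$, not the braid-closure. This is forced by context (the lemma sits in the clasp-closure subsection and feeds directly into Lemma~\ref{lem:notwosame}) and, more decisively, by the content of the claim: any index-$3$ braid-closure has rank exactly one in annular grading $k=3$ (the Plamenevskaya class), so ``rank two in $k=3$'' already rules out a braid-closure. Had you carried your computation through you would have found a single $k=3$ generator in $(i,j)=(0,-1)$ and an annular Jones polynomial with $t^{3}$-coefficient $q^{-1}$, not $-q^{7}+q^{5}$.

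Two further specific errors compound this. First, there is no isotopy between $c(\sigma_1^{-1})$ and $b(\sigma_1^{-3}\sigma_2\sigma_1^{-2})$: the former has $\rank(\AKh)=14$ by Lemma~\ref{lem:comp1}, while the latter (which, as you note, is conjugate to $\sigma_2\sigma_1^{-5}$ and hence after relabeling strands to $\sigma_1\sigma_2^{-5}$) has rank $18$ by Lemma~\ref{lem:t2ncomps}. Second, your identification of the $1$-resolution at the $\sigma_2$ crossing is wrong: the unoriented smoothing is a cap-cup on strands $2,3$, producing a wrapping-number-$1$ annular link, not the $2$-braid closure $T(2,-5)$.

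The paper's actual argument resolves a crossing \emph{in the clasp}, yielding as inputs the two honest braid-closures $b(\sigma_1^{-3}\sigma_2\sigma_1^{-2})$ and (after an orientation correction carrying a $[2]\{6\}$ shift) $b(\sigma_1^{-3}\sigma_2\sigma_1^{-2}\sigma_2^{-1})$. The first of these is covered by Lemma~\ref{lem:t2ncomps} via the conjugacy you observed, but the second is not --- its $\AKh$ has rank $16$, matching no $b(\sigma_1^{\pm1}\sigma_2^{m})$ --- so the paper computes both via the Hunt--Keese--Licata--Morrison program. The triangle does not split globally, but the annular Jones polynomial is determined by decategorification regardless, and in $k=3$ each input contributes a single generator; after the shifts $\{1\}$ and $[3]\{8\}$ these land in $(i,j)=(2,5)$ and $(3,7)$, and the connecting map between them vanishes for grading reasons.
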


\begin{proof}

Consider the $0$ and $1${-}resolutions of the crossing at the top of the diagram shown in Figure~\ref{fig:claspclosure} taking $\alpha=\sigma_1^{-3}\sigma_2\sigma_1^{-2}$. The $0${-}resolution yields the braid $b'(\sigma_1^{-3}\sigma_2\sigma_1^{-2}\sigma_2^{-1})$ but with the orientation of the component which is not a braid-closure of the $1$-braid endowed with the opposite orientation --- which we have indicated with the $'$. The $1${-}resolution is $b(\sigma_1^{-3}\sigma_2\sigma_1^{-2})$.

The annular Khovanov homology of the two braids can be computed using Hunt-Keese-Licata-Morrison's program~\cite{hunt2015computing}. In particular we find that $\AKh(b(\sigma_1^{-3}\sigma_2\sigma_1^{-2}\sigma_2^{-1});\C)$ is given by:

  \begin{center}

    \begin{tabular}{|c|c|c|c|c|}
    \hline
    \backslashbox{\!$i$\!}{\!$k$\!}
     & $-3$&$-1$&$1$&$3$ \\
\hline
$0$&$\C_{-8}$&$\C_{-6}$&$\C_{-4}$&$\C_{-2}$\\\hline
$-1$& &$\C_{-8}$&$\C_{-6}$& \\\hline
$-2$& &$\C^2_{-8}$&$\C^2_{-6}$& \\\hline
$-3$& &$\C_{-10}$&$\C_{-8}$& \\\hline

$-4$& &$\C_{-12}$&$\C_{-10}$& \\\hline
$-5$& &$\C_{-14}$&$\C_{-12}$& \\\hline

  \end{tabular} 
\end{center}

To correct for the fact that one of the components is given the non-braid orientation we have to shift the homological grading by $[2]$ and the quantum grading by $\{6\}${. This holds because (annular) Khovanov homology is defined by applying a cube of resolution procedure to an unoriented (annular) link diagram, and then shifting the (diagram dependent) homological and quantum gradings by quantities determined by the orientation ---  namely the quantum grading by $\{n_+-2n_-\}$ and the homological grading by $-n_-$ where $n_\pm$ are the number of positive and negative intersections --- to obtain the diagram independent homological and quantum gradings. It is then straightforward to check that for the diagram at hand, reversing the orientation of the relevant component induces the shifts in homological and quantum grading as claimed above}.

On the other hand, $\AKh(b(\sigma_1^{-3}\sigma_2\sigma_1^{-2});\C)$ is given by:

 \begin{center}

    \begin{tabular}{|c|c|c|c|c|}
    \hline
    \backslashbox{\!$i$\!}{\!$k$\!}
     & $-3$&$-1$&$1$&$3$ \\
\hline
$1$& &$\C_{-5}$&$\C_{-3}$& \\\hline
$0$&$\C_{-7}$&$\C_{-5}^2$&$\C_{-3}^2$&$\C_{-1}$\\\hline
$-1$& &$\C_{-7}$&$\C_{-5}$& \\\hline
$-2$& &$\C_{-9}$&$\C_{-7}$& \\\hline
$-3$& &$\C_{-11}$&$\C_{-9}$& \\\hline

$-4$& &$\C_{-13}$&$\C_{-11}$& \\\hline
$-5$& &$\C_{-15}$&$\C_{-13}$& \\\hline
  \end{tabular} 
\end{center}

Now observe that $n_-=4$ and $n_-^1=6$. Thus we have the following exact triangle

\begin{center}

\begin{equation*}
\begin{tikzcd}
\AKh(L;\C)\arrow[rr]&&\AKh(b'(\sigma_1^{-3}\sigma_2\sigma_1^{-2}\sigma_2^{-1});\C)\{1\}\arrow[dl]\\&\AKh(b(\sigma_1^{-3}\sigma_2\sigma_1^{-2});\C)[3]\{8\}\arrow[ul]
\end{tikzcd}
\end{equation*}
\end{center}

 This isn't enough information to show that the exact triangle splits. However, it does split in annular gradings $\pm3$, and the decat{e}gorification of the exact triangle determines the annular Jones polynomial, as desired.\end{proof}

Let $S$ be the annular link given by the split sum of $b(\mathbf{1}_1)$ and an unknot. Observe that the annular Jones polynomial of $S$ is given by $J(S)=t^{-1}(1+q^{-2})+t(1+q^2)$. 

\begin{lemma}\label{lem:comp3}
    The annular Jones polynomial of $c(\sigma^n_1\sigma_2^{-1}\sigma_1\sigma_2)$ is given by:
      \begin{align*}
    J(c(\sigma^n_1\sigma_2^{-1}\sigma_1\sigma_2))=\dfrac{q+(-1)^{n+1}q^{1-2n}}{1+q^{2}}J(S)+(-1)^nq^{-2n}J(c(\sigma_2^{-1}\sigma_1\sigma_2)).
\end{align*}

Moreover, in annular grading $3$ the annular Khovanov homology is rank two and, supported in $(i,j)$ gradings $(-n,3-2n)$ and $(-1-n,1-2n)$.
\end{lemma}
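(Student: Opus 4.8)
The plan is to mimic the inductive strategy used in Lemma~\ref{lem:comp2}, applying the negative-crossing skein exact triangle~(\ref{eq:akhskeinneg}) at one of the crossings coming from the $\sigma_2^{-1}$ factor in $c(\sigma_1^n\sigma_2^{-1}\sigma_1\sigma_2)$ — or, more conveniently, induct on $n$ by resolving one of the $\sigma_1^{\pm1}$ crossings. First I would set up the base case: for $n=0$ the statement reduces to the assertion that $J(c(\sigma_2^{-1}\sigma_1\sigma_2))=J(c(\sigma_2^{-1}\sigma_1\sigma_2))$, which is trivially true, and the annular-grading-$3$ claim in the $(i,j)$-gradings $(0,3)$ and $(-1,1)$ must be checked by a direct computation; this can be done by hand or with Hunt--Keese--Licata--Morrison's program~\cite{hunt2015computing} exactly as in Lemma~\ref{lem:comp2}. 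Then for the inductive step I would resolve one $\sigma_1$ (for $n\geq 1$) or one $\sigma_1^{-1}$ (for $n\leq -1$): the $0$- or $1$-resolution yields $c(\sigma_1^{n\mp1}\sigma_2^{-1}\sigma_1\sigma_2)$, and the other resolution yields the clasp-closure of a braid in which two adjacent strands have been merged, which one recognizes as (a grading-shifted copy of) the split link $S$ up to correcting for a possibly reversed orientation on the non-essential component — just as in Lemma~\ref{lem:comp2} one accounts for such a reversal by the shifts $\{n_+-2n_-\}$ in the quantum grading and $-n_-$ in the homological grading.

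On decategorification the exact triangle~(\ref{eq:akhskeinneg}) becomes a linear recursion for the annular Jones polynomial, of the schematic form $J(c(\sigma_1^n\cdots)) = \pm q^{a}\, J(c(\sigma_1^{n\mp1}\cdots)) + (\text{shift})\cdot J(S)$, where the sign and the power of $q$ are read off from $n_-$, $n_-^0$ (or $n_-^1$) for the relevant diagram. Solving this inhomogeneous geometric recursion with the base case at $n=0$ gives exactly the closed form in the statement: the homogeneous part contributes the $(-1)^n q^{-2n} J(c(\sigma_2^{-1}\sigma_1\sigma_2))$ term, and summing the geometric series of the inhomogeneous $J(S)$-terms produces the coefficient $\dfrac{q+(-1)^{n+1}q^{1-2n}}{1+q^2}$ (note $\frac{q + (-1)^{n+1}q^{1-2n}}{1+q^2} = q\sum_{j=0}^{n-1}(-1)^j q^{-2j}$ for $n\geq 1$, which makes the finite-sum form manifest and confirms the closed form). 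I would verify the sign and exponent bookkeeping once carefully for $n=1$ against the base case and once more for $n=-1$, since the exponent shift depends on whether we resolved a positive or negative $\sigma_1$.

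For the second assertion — rank two in annular grading $k=3$, supported in $(i,j)$-gradings $(-n,3-2n)$ and $(-1-n,1-2n)$ — I would argue as in Lemma~\ref{lem:comp2} that the skein exact triangle \emph{splits in the extremal annular gradings} $k=\pm3$: the link $S$ has wrapping number one (indeed it is a split sum with the $1$-braid, so its annular Khovanov homology has maximal annular grading $1$ by the K\"unneth formula for annular Khovanov homology), so it contributes nothing in annular grading $\pm3$, and the connecting map therefore must vanish there. Thus in annular grading $3$ the annular Khovanov homology of $c(\sigma_1^n\sigma_2^{-1}\sigma_1\sigma_2)$ is a grading-shifted copy of that of the $n=0$ link, and tracking the shift $[\,\cdot\,]\{\cdot\}$ through the recursion — again using the $n_-$-dependent homological and quantum shifts — moves the base-case gradings $(0,3)$ and $(-1,1)$ to $(-n,3-2n)$ and $(-1-n,1-2n)$, as claimed. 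The main obstacle I anticipate is not conceptual but bookkeeping: getting the signs of the homological-grading shifts and the powers of $q$ in the recursion exactly right (including the orientation-reversal corrections on the non-braid component), so I would pin these down by cross-checking the $n=0,\pm1$ cases explicitly against the computer computation before propagating the induction.
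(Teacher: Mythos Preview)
Your proposal is correct and takes essentially the same route as the paper: a skein-triangle induction on $n$ in which one resolution reduces $n$ by one and the other is the wrapping-number-one link $S$ (hence trivial in annular grading $\pm3$), decategorified to the recursion $J(c(\sigma_1^n\cdots))=q^{-1}J(S)-q^{-2}J(c(\sigma_1^{n-1}\cdots))$. The one simplification the paper adds is the observation that $c(\sigma_2^{-1}\sigma_1\sigma_2)$ is isotopic as an annular link to $c(\sigma_1)$, so the base case becomes a short hand computation via a single further skein triangle (with resolutions $b(\sigma_1\sigma_2^{-1})$ and $b(\sigma_1)$) rather than a computer check.
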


\begin{proof}

We first compute the annular Khovanov homology of $c(\sigma_1)$, which is isotopic to $c(\sigma_2^{-1}\sigma_1\sigma_2)$.

Consider the $0$ and $1${-}resolutions of the crossing at the top of the diagram shown in Figure~\ref{fig:claspclosure} taking $\alpha=\sigma_1$. The $0${-}resolution is $b(\sigma_1\sigma_2^{-1})$. The $1${-}resolution is $b(\sigma_1)$. Observe that $n_-^0=1, n_-=2$, so that we have:

\begin{center}
\begin{tikzcd}
\AKh(c(\sigma_1);\C)\arrow[rr]&&\AKh(b(\sigma_1\sigma_2^{-1});\C)[-1]\{-2\}\arrow[dl,"\delta"]\\&\AKh(b(\sigma_1);\C)\{-1\}\arrow[ul]
\end{tikzcd}
\end{center}

Now $\AKh(b(\sigma_1\sigma_2^{-1});\C)[-1]\{-2\}$ is given by:

\begin{center}

    \begin{tabular}{|c|c|c|c|c|}
    \hline
    \backslashbox{\!$i$\!}{\!$k$\!}
     & $-3$& $-1$&$1$&$3$ \\
\hline
$0$& & $\C_{-1}$&$\C_{1}$& \\\hline
$-1$&$\C_{-5}$& $\C_{-3}^2$&$\C_{-1}^2$&$\C_1$ \\\hline
$-2$& & $\C_{-5}$&$\C_{-3}$& 
\\\hline
  \end{tabular} 
\end{center}

while $\AKh(b(\sigma_1);\C)\{-1\}$ is given by:

 \begin{center}

    \begin{tabular}{|c|c|c|c|c|}
    \hline
    \backslashbox{\!$i$\!}{\!$k$\!}
     & $-3$&$-1$&$1$&$3$ \\
\hline

$1$&&$\C_{1}$&$\C_3$&\\\hline
$0$&$\C_{-3}$&$\C_{-1}^2$&$\C_{1}^2$&$\C_3$\\\hline

  \end{tabular} 
\end{center}

Thus{, since the map $\delta$ preserves the quantum grading and increases the homological grading by one,} the exact triangle splits and $\AKh(c(\sigma_1);\C)$ is given by

\begin{center}

    \begin{tabular}{|c|c|c|c|c|}
    \hline
    \backslashbox{\!$i$\!}{\!$k$\!}
     & $-3$& $-1$&$1$&$3$ \\
\hline
$1$&&$\C_{1}$&$\C_3$&\\\hline
$0$&$\C_{-3}$&$\C_{-1}^3$&$\C_{1}^3$&$\C_3$\\\hline
$-1$&$\C_{-5}$& $\C_{-3}^2$&$\C_{-1}^2$&$\C_1$ \\\hline
$-2$&& $\C_{-5}$&$\C_{-3}$&
\\\hline

  \end{tabular} 
\end{center}

We now proceed to the general case. Remove the axis from the diagram shown in Figure~\ref{fig:inffamilyres} to obtain a diagram for the link. Observe that the {$1$-}resolution {of the highlighted crossing} is the link $S$, which has annular Khovanov homology given by:

\begin{center}

    \begin{tabular}{|c|c|c|}
    \hline
    \backslashbox{\!$i$\!}{\!$k$\!}
     & $-1$&$1$ \\
\hline

$0$&$\C_{-2}\oplus\C_{0}$&$\C_{0}\oplus\C_{2}$\\\hline

  \end{tabular} 
\end{center}

Now, since we can take $n_-^0-n_-=-1$, the exact triangle~(\ref{eq:akhskeinneg}) reduces to:

\begin{center}
\begin{tikzcd}
    \AKh(\sigma^n_1\sigma_2^{-1}\sigma_1\sigma_2;\C)\arrow[rr]&&\AKh(\sigma^{n-1}_1\sigma_2^{-1}\sigma_1\sigma_2;\C)[-1]\{-2\}\arrow[dl]\\&\AKh(S;\C)\{-1\}\arrow[ul]

\end{tikzcd}
\end{center}

Since $\AKh(S;\C)$ is trivial in annular grading $3$ this proves the second part of the result. For the first part, observe that decat{e}gorifying either of the above exact triangles we obtain;

\begin{align*}
    J(c(\sigma^n_1\sigma_2^{-1}\sigma_1\sigma_2))=q^{-1}J(S)-q^{-2}J(c(\sigma^{n-1}_1\sigma_2^{-1}\sigma_1\sigma_2)).
\end{align*}

The desired result follows by induction. \end{proof}

\begin{remark}

One could perhaps give a complete computation of the annular Khovanov homology of the infinite family of clasp-closures using techniques of J.Wang~\cite{Wangcosmetic}. The annular Jones polynomial was enough for our purposes, however, so we do not pursue this.
\end{remark}

Let $r(\beta)$ denote the reverse of the braid word $\beta$ written in terms of the standard Artin generators.

\begin{lemma}\label{lem:notwosame}
     Suppose $\beta_1$ and $\beta_2$ are $3$-braids such that $c(\beta_1)$ {and} $c(\beta_2)$ represent unknots. If ${\AKh(c(\beta_1);\C)\cong\AKh(c(\beta_2);\C)}$ then $c(\beta_1)=c(\beta_2)$ or $c(\beta_1)=c(r(\beta_2))$.
\end{lemma}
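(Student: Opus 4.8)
The plan is to run through Baldwin--Sivek's classification of $3$-braids with unknotted clasp-closures and to separate the resulting annular knots using the partial annular Khovanov homology data already recorded in Lemmas~\ref{lem:comp1},~\ref{lem:comp2} and~\ref{lem:comp3}. By the classification in the proof of~\cite[Theorem 6.1]{baldwin2022floer}, any annular knot $c(\beta)$ representing an unknot is, up to mirroring and braid-word reversal, one of $c(\sigma_1^{-1})$, $c(\sigma_1^3\sigma_2^{-1}\sigma_1^2\sigma_2)$, or $c(\sigma_1^n\sigma_2^{-1}\sigma_1\sigma_2)$ for some $n\in\Z$; call these the three \emph{types}. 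I would first record the two facts that drive everything: $\AKh(-;\C)$ is unchanged under braid-word reversal, while under mirroring it satisfies $\AKh_{i,j,k}(\overline L;\C)\cong\AKh_{-i,-j,-k}(L;\C)$. In particular $c(\beta)$ and $c(r(\beta))$ always have isomorphic annular Khovanov homology, so the Lemma reduces to showing that (i) no two distinct types share an $\AKh(-;\C)$-type, even after mirroring or reversal, and (ii) within a single type the only such coincidences are the pairs $c(\beta)\leftrightarrow c(r(\beta))$. Granting (i) and (ii), a common $\AKh(-;\C)$ forces $c(\beta_1)$ and $c(\beta_2)$ into the same type with the same chirality, and then the orbit structure under mirroring and reversal gives $c(\beta_1)\in\{c(\beta_2),c(r(\beta_2))\}$. (For the first type $r(\sigma_1^{\pm 1})=\sigma_1^{\pm 1}$, so the reversal clause is vacuous there, in agreement with the on-the-nose statement of Theorem~\ref{thm:AKHmazur}.)

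For (i) and (ii) I would use three invariants extracted from the earlier lemmas: the annular Jones polynomial (determined by Lemmas~\ref{lem:comp2} and~\ref{lem:comp3}), the bigraded support of $\AKh(-;\C)$ in the extremal annular grading $k=3$ --- which is rank two with its $(i,j)$-location given explicitly in Lemmas~\ref{lem:comp1}--\ref{lem:comp3} --- and, where it has been fully computed (Lemmas~\ref{lem:comp1} and~\ref{lem:comp3} at $n=0$), the total rank of $\AKh(-;\C)$. The $\slrep(\C)$-representation structure lets the $k=3$ data determine the $k=-3$ data up to a fixed quantum shift and constrain the intermediate annular gradings. Within the third type, Lemma~\ref{lem:comp3} places the $k=3$ support at $(-n,3-2n)$ and $(-1-n,1-2n)$, which are pairwise distinct for distinct $n$ and, together with the recursive formula for the annular Jones polynomial, also separate the third type from all mirror and reverse variants of the first two. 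The subtle comparisons are between the third type and its own mirrors: a short computation shows that the mirror of $c(\sigma_1^n\sigma_2^{-1}\sigma_1\sigma_2)$ has \emph{the same} $k=3$ support as $c(\sigma_1^{-n-1}\sigma_2^{-1}\sigma_1\sigma_2)$, so here the extremal annular grading alone is not enough and one must appeal to the annular Jones polynomial --- using $J(\overline L)(t,q)=J(L)(t^{-1},q^{-1})$ --- to distinguish the two (and likewise for the second type against its mirror).

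The main obstacle is precisely this near-collision within the third type: members of the infinite family and their mirrors agree on the extremal annular grading, so the argument has to be pushed one step further with the annular Jones polynomial, and since Lemmas~\ref{lem:comp2} and~\ref{lem:comp3} supply only the annular Jones polynomial together with the $k=\pm 3$ part (not the full homology), the bookkeeping must be arranged so that only these computed quantities are invoked. This is exactly the input needed for Theorem~\ref{thm:AKHmazur}: the Mazur pattern $c(\sigma_1)$ is the $n=0$ member of the third type, and its only potential annular-Khovanov confusions are with the $n=-1$ member and with certain mirrors, all of which must be ruled out by the annular Jones polynomial. The remaining case analysis --- the finitely many mirror/reverse variants of the first two types and the doubly-infinite list of variants of the third --- is routine once the $k=\pm 3$ supports and the annular Jones polynomials are on the table.
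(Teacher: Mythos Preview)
Your approach is essentially the paper's: use the Baldwin--Sivek classification together with Lemmas~\ref{lem:comp1}--\ref{lem:comp3} to distinguish the clasp-closures by their annular Khovanov data. The paper's proof is terser --- it simply asserts that the $k=3$ part alone separates every pair --- whereas you carry out the case analysis more carefully and flag a potential collision (the mirror of the $n$-th type-3 member versus the $(-n-1)$-th member share the same $k=3$ support), proposing the annular Jones polynomial from Lemma~\ref{lem:comp3} as a backup invariant. Whether that collision is genuine, or whether those two annular knots are in fact isotopic (up to reversal), is not settled in either argument; if they coincide the paper's one-line claim is already enough, and if not your annular-Jones patch is what is needed --- but then you should actually verify, using the recursion in Lemma~\ref{lem:comp3} and $J(\overline{L})(t,q)=J(L)(t^{-1},q^{-1})$, that the polynomials differ. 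Your assertion that $\AKh(c(\beta);\C)\cong\AKh(c(r(\beta));\C)$ also deserves a one-line justification (the relevant diagrammatic symmetry).
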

   \begin{proof}
      Observe that Lemma~\ref{lem:comp1}, Lemma~\ref{lem:comp2}, and Lemma~\ref{lem:comp3} determine the annular Khovanov homology of all of the clasp-closures up to mirroring. The annular Khovanov homology of their mirrors can be determined using formal properties of annular Khovanov homology. We can then see that no two clasp-closures of $3$-braids representing unknots have the same annular Khovanov homology in annular grading $3$ so the result follows.
   \end{proof}

\begin{proof}[Proof of Theorem~\ref{thm:AKHmazur}]
    Suppose $K$ is an annular link with $\AKh(K;\Z)\cong\AKh(c(\sigma^{-1});\Z)$. Since $\AKh(K;\Z)$ is supported in odd quantum gradings it follows that $K$ has an odd number of components. Consider the Batson-Seed link splitting sequence for $\Kh(K;\C)$. Observe that $\rank(\Kh(K;\C))\geq 2^m$, where $m$ is the number of components of $K$. Now observe that Lemma~\ref{lem:nontivialrep} {implies that the rank of $\partial_{-2}^*$ is at least $4$ since the two $V_{n-2}$ summands in the statement of the lemma are mapped non-trivially under $\partial_{-2}^*$, which we recall is part of the $\slrep(\C)$ action on annular Khovanov homology. I}n turn {it follows }that {$\rank(\Kh(K;\C)) \leq 6$.} Thus $K$ has a single component. Lemma~\ref{lem:AKHdetectsclasps} implies that $K$ is a clasp-closure of a $3$-braid.

We now show that $K$ represents the unknot. An application of the universal coefficient theorem shows that $\AKh(K;\Z/2)\cong\AKh(c(\sigma^{-1});\Z/2)$. Consider the spectral sequence from $\AKh(K;\Z/2)$ to $\Kh(K;\Z/2)$. Lemma~\ref{lem:akhboundmod2} implies that $\rank(\Kh(K;\Z/2))\leq 6$. It follows that $\rank(\Khr(K;\Q))\leq \rank(\Khr(K;\Z/2))\leq 3$, so that $L$ is either a trefoil or an unknot by~\cite{kronheimer2011khovanov} and~\cite{baldwin2018khovanov}. However, $K$ cannot be a trefoil because $\AKh(K;\C)$, and hence $\Kh(K;\C)$, does not contain a summand in quantum grading $\pm9$.

It follows that $K$ is a clasp-closure of one of Baldwin-Sivek's $3$-braid types. By Lemma~\ref{lem:notwosame}, if two such annular links have the same annular Khovanov homology then they differ only up to reversal. But of course, $c(\sigma_1^{-1})$ and $c(r(\sigma_1^{-1}))$ are isotopic, so the result follows.
\end{proof}

\bibliographystyle{alpha}
\bibliography{bibliography}

\end{document}